\newcommand{\ZZ}{{\mathbb{Z}}}
\newcommand{\fS} {\mathfrak S}
\newcommand{\Aut}{{{\operatorname{Aut}}}}
\newcommand{\End}{{{\operatorname{End}}}}
\newcommand{\Ext}{{{\operatorname{Ext}}}}
\newcommand{\Hom}{{{\operatorname{Hom}}}}
\newcommand{\rnk}{{{\operatorname{rnk}}}}
\newcommand{\GL}{\operatorname{GL}}
\newcommand{\SL}{\operatorname{SL}}
\newcommand{\Sp}{\operatorname{Sp}}
\newcommand{\GO}{\operatorname{GO}}
\newcommand{\rad}{\operatorname{rad}}
\newcommand{\SO}{\operatorname{SO}}
\newcommand{\Soc}{\operatorname{Soc}}
\newcommand{\tV}{{\tilde V}}
\newcommand\oi{{\bar i}}
\let\al=\alpha
\let\vhi=\varphi
\let\la=\lambda
\let\lra\longrightarrow
\newtheorem{thm}{Theorem}[section]
\newtheorem{lem}[thm]{Lemma}
\newtheorem{cor}[thm]{Corollary}
\newtheorem{prop}[thm]{Proposition}
\newtheorem{thmA}{Theorem}
\theoremstyle{definition}
\newtheorem{rem}[thm]{Remark}
\newtheorem{exmp}[thm]{Example}
\begin{document}

\title[Regular unipotent elements]{Overgroups of regular unipotent\\ 
       elements in simple algebraic groups}

\date{\today}
\author{Gunter Malle}
\address{FB Mathematik, TU Kaiserslautern, Postfach 3049,
  67653 Kaisers\-lautern, Germany.}
\email{malle@mathematik.uni-kl.de}
\author{Donna M. Testerman}
\address{Institut de Math\'ematiques, Station 8, \'Ecole Polytechnique
  F\'ed\'erale de Lausanne, CH-1015 Lausanne, Switzerland.}
\email{donna.testerman@epfl.ch}

\keywords{regular unipotent elements, disconnected subgroups, reductive
  subgroups, almost simple linear algebraic groups}

\thanks{Work on this article was begun while the authors were visiting the
Mathematical Sciences Research Institute in Berkeley, California in Spring
2018 for the programme ``Group Representation Theory and Applications''
supported by the National Science Foundation under Grant No. DMS-1440140.
Testerman was supported by the Fonds National Suisse de la Recherche
Scientifique grant number 200021-175571. We thank the Isaac Newton Institute
for the Mathematical Sciences, where this work was completed, for support and
hospitality during the programme ``Groups, Representations and Applications:
New Perspectives''. This work was supported by: EPSRC grant number
EP/R014604/1.}

\subjclass[2010]{20G05, 20G07, 20E28}

\begin{abstract}
We investigate positive-dimensional closed reductive subgroups of almost simple
algebraic groups containing a regular unipotent element. Our main result states
that such subgroups do not lie inside proper parabolic subgroups unless
possibly when their connected component is a torus.
This extends the earlier result of Testerman and Zalesski treating connected
reductive subgroups.
\end{abstract}

\maketitle


\section{Introduction}

Let $G$ be a simple linear algebraic group defined over an algebraically closed
field. The regular unipotent elements of $G$ are those whose centraliser has
minimal possible dimension (the rank of $G$) and these form a single conjugacy
class which is dense in the variety of unipotent elements of $G$. The main
result of our paper is a contribution to the study of positive-dimensional
subgroups of $G$ which meet the class of regular unipotent elements. Since any
parabolic subgroup must contain representatives from every unipotent conjugacy
class, the question arises only for reductive, not necessarily connected
subgroups, where we establish the following:

\begin{thmA}   \label{thm:main}
 Let $G$ be a simple linear algebraic group over an algebraically closed field,
 $X\le G$ a closed reductive subgroup containing a regular unipotent element
 of~$G$. If $[X^\circ,X^\circ]\ne 1$, then $X$ lies in no proper parabolic
 subgroup of $G$.
\end{thmA}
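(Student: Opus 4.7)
The plan is to argue by contradiction. Let $u \in X$ be a regular unipotent of $G$, and suppose $X \le P$ for some proper parabolic $P = LU$ of $G$, with Levi decomposition $P = L \ltimes U$. Since $u$ lies in a unique Borel of $G$, $P$ contains that Borel, so $\bar u := uU \in L \cong P/U$ is a regular unipotent of $L$. It is a standard fact that a regular unipotent of $G$ cannot lie in any proper Levi of $G$ (its centralizer in such a Levi is too small), so writing $u = l_0 v_0$ uniquely with $l_0 \in L$ and $v_0 \in U$, we have $v_0 \ne 1$.

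The connected reductive group $X^\circ$ intersects $U$ in a connected unipotent normal subgroup of itself, hence trivially; by the standard result that a connected reductive subgroup of a parabolic is $U$-conjugate into the Levi, I may after replacing $X$ by such a conjugate assume $X^\circ \le L$. If $u \in X^\circ$, then the Testerman--Zalesski theorem applied to the connected reductive subgroup $X^\circ$ (which contains the regular unipotent $u$ and has $[X^\circ, X^\circ] \ne 1$) immediately contradicts $X^\circ \le L \le P$.

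The substantive case is therefore $u \notin X^\circ$. The requirement $u X^\circ u^{-1} = X^\circ \le L$ together with the semidirect decomposition $P = L \ltimes U$ forces, by a direct computation tracking $L$- and $U$-components and using that $L$ normalizes $U$, the conditions $v_0 \in C_U(X^\circ)$ and $l_0 \in N_L(X^\circ)$. In particular, $l_0$ is then a regular unipotent of $L$ normalizing $X^\circ$, inducing the same automorphism of $X^\circ$ as $u$ does. I would then consider the closed reductive subgroup $X_1 := X^\circ \langle l_0 \rangle \le L$: it contains the regular unipotent $l_0$ of $L$ and satisfies $[X_1^\circ, X_1^\circ] = [X^\circ, X^\circ] \ne 1$. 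Since $[X^\circ, X^\circ]$ is semisimple and nontrivial, it has nontrivial projection onto some simple factor $L_i$ of the derived subgroup $[L, L]$, and the projection of $X_1$ to $L_i$ yields a closed reductive subgroup of the simple group $L_i$ (of strictly smaller rank than $G$) satisfying the hypotheses of the theorem with $G$ replaced by $L_i$. An induction on the rank of $G$ then applies.

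The hardest step is converting this inductive conclusion into a contradiction for the original embedding $X \le P$: the inductive hypothesis rules out $\pi_{L_i}(X_1)$ lying in a proper parabolic of $L_i$, but does not by itself control the embedding of $X$ into $P$. The delicate subcases are: when $l_0 \in X^\circ$, where the obstruction shifts to the element $v_0 \in (X \cap U) \cap C_U(X^\circ)$ and one must instead analyse the reductive extension $X^\circ \cdot \langle v_0 \rangle$; when $L$ is not simple and some simple factors of $[L, L]$ receive only a central projection from $X^\circ$; and when the outer automorphism of $X^\circ$ induced by $u$ has order a power of the characteristic, in which case cohomological obstructions can prevent $X$ itself (rather than just $X^\circ$) from being $U$-conjugate into $L$. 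Handling these will likely require a case analysis along the Dynkin classification of $G$, together with a classification of connected reductive subgroups of reductive groups admitting regular unipotent elements in their normalizers but not themselves.
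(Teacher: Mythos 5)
There is a genuine gap, and it sits at the very first structural step. You assert that ``a connected reductive subgroup of a parabolic is $U$-conjugate into the Levi'' and use this to assume $X^\circ\le L$. This is false in positive characteristic: it is exactly the statement that every connected reductive subgroup of $G$ is $G$-completely reducible, and counterexamples abound in small characteristic. Since the characteristic~$0$ case of the theorem reduces immediately to the connected case and \cite[Thm.~1.2]{TZ13} (Remark~\ref{rem:char 0}), the entire content of the theorem lives in positive characteristic, where your reduction is unavailable. Indeed, the hardest portions of the paper's proof are devoted precisely to the configurations your assumption would erase: Section~\ref{sec:SL(V)} has a whole subsection on the case where $V$ is \emph{not} completely reducible for $X^\circ$ (Lemma~\ref{lem:soc}, Propositions~\ref{prop:ext full} and~\ref{prop:3.4(2)}), and Proposition~\ref{prop:orthonew1}(4) explicitly isolates the case where ``there exists no $X^\circ$-complement to $V_1$ in $V_1^\perp$,'' i.e.\ where $X^\circ$ is not conjugate into a Levi. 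Ruling these out requires the $\Ext^1$-vanishing machinery (Lemma~\ref{lem:ext fact}, McNinch's semisimplicity bounds) that your outline does not engage with.

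The second problem is that even where your reduction applies, the induction has no endgame. The inductive hypothesis tells you that $\pi_{L_i}(X_1)$ lies in no proper parabolic of the simple factor $L_i$; but this is no contradiction with $X\le P$ --- it is the expected behaviour of any subgroup of $P$ whose image in $L$ is Levi-irreducible, and such subgroups of parabolics certainly exist. You acknowledge this (``the hardest step is converting this inductive conclusion into a contradiction'') but propose no mechanism to do so, deferring instead to an unspecified ``classification of connected reductive subgroups of reductive groups admitting regular unipotent elements in their normalizers.'' That classification is essentially the content of Propositions~\ref{prop:new SS} and~\ref{prop:irr new} combined with the extension analysis, i.e.\ it is the proof, not an input to it. The paper does not induct on the rank of $G$ at all: for classical $G$ it works directly with the Jordan form of $u$ on the natural module and shows $X$ acts irreducibly (or derives a contradiction from the block sizes), and for exceptional $G$ it argues via orders of unipotent elements, power maps and centraliser structure from \cite{La95} and \cite{LS12}. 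As written, your proposal establishes only the easy reductions (which do agree with Remarks~\ref{rem:char 0} and~\ref{rem:transitive} and Lemma~\ref{lem:reg in Levi}) and leaves the substance of the theorem unproved.
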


\noindent
In addition, we show that for many simple groups $G$, there exists a closed
reductive subgroup $X\leq G$ with $X^\circ\ne1$ a torus and such that $X$
meets the class of regular unipotent elements of $G$.
(See Proposition~\ref{prop:torus SL II} and Examples~\ref{exmp:torus orth},
\ref{exmp:torus exc}.) Finally, we go on to consider subgroups of non-simple
almost simple algebraic groups $G$ where there is a well-defined notion of
regular unipotent elements in unipotent cosets of $G^\circ$. We establish the
corresponding result in this setting; see Corollary~\ref{cor:dis}.
\medskip

The investigation of the possible overgroups of regular unipotent elements in
simple linear algebraic groups has a long history.  The \emph{maximal} closed
positive-dimensional reductive subgroups of $G$ which meet the class of regular
unipotent elements were classified by Saxl and Seitz \cite{SS97} in 1997. In
earlier work, see \cite[Thm. 1.9]{Sup95}, Suprunenko obtained a particular case
of their result. In order to derive from the Saxl--Seitz classification an
inductive description of all closed positive-dimensional reductive subgroups
$X\leq G$ containing regular unipotent elements, one needs to exclude that any
of these can lie in proper parabolic subgroups. For connected $X$ this was shown
by Testerman and Zalesski in \cite[Thm. 1.2]{TZ13} in 2013. They then went on to
determine all connected reductive subgroups of simple algebraic groups which
meet the class of regular unipotent elements.
Our result generalises \cite[Thm 1.2]{TZ13} to the disconnected case and thus
makes the inductive approach possible. It is worth pointing out that the
analogous result is no longer true even for simple subgroups once one relaxes
the condition of positive-dimensionality. For example, there exist reducible
indecomposable representations of the group ${\rm PSL}_2(p)$ whose image in the
corresponding $\SL(V)$ contains a matrix with a single Jordan block, i.e., the
image meets the class of regular unipotent elements in $\SL(V)$. In \cite{BT},
Burness and Testerman consider ${\rm PSL}_2(p)$-subgroups of exceptional type
simple algebraic groups which meet the class of regular unipotent elements and
show that with the exception of two precise configurations, such a subgroup does
not lie in a proper parabolic subgroup of $G$ (see \cite[Thms.~1 and~2]{BT}.
\medskip

Our proof of Theorem~\ref{thm:main} relies on the result of Testerman--Zalesski
\cite{TZ13} in the connected case, which actually implies our theorem in
characteristic~0 (see Remark~\ref{rem:char 0}) as well as on results of
Saxl--Seitz \cite{SS97} classifying almost simple irreducible and tensor
indecomposable subgroups of classical groups containing regular unipotent
elements and maximal reductive subgroups in exceptional groups with this
property. For the exceptional groups we also use information on centralisers of
unipotent elements and detailed knowledge of Jordan block sizes of unipotent
elements acting on small modules, as found in Lawther \cite{La95}. For
establishing the existence of positive-dimensional reductive subgroups $X\leq G$,
with $X^\circ$ a torus, and $X$ meeting the class of regular unipotent elements,
we produce subgroups which centralise a non-trivial unipotent element and hence
necessarily lie in a proper parabolic subgroup of $G$.
(See \cite[Thm.~17.10, Cor.~17.15]{MT}.)
\medskip

After collecting some useful preliminary results we deal with the case of
$G=\SL(V)$ in Section~\ref{sec:SL(V)}, with the orthogonal case in
Section~\ref{sec:SO(V)}, and with the simple groups of exceptional type in
Section~\ref{sec:exc}. The case of almost simple groups is deduced from the
connected case in Corollary~\ref{cor:dis}. Finally, in Section~\ref{sec:tori}
we discuss the case when $X^\circ$ is a torus.
\medskip

\noindent
{\bf Acknowledgements}: We thank Steve Donkin, Jacques Th\'evenaz and Adam
Thomas for helpful conversations on cohomology, and Mikko Korhonen and Adam
Thomas for their careful reading of and comments on an earlier version.

\section{Preliminary results}

In this paper we consider almost simple algebraic groups defined over an
algebraically closed field $k$ of characteristic $p\ge0$ and investigate closed
positive-dimensional subgroups which contain a regular unipotent element. For
us, throughout ``algebraic group'' will mean ``linear algebraic group'', and all
vector spaces will be finite-dimensional vector spaces over $k$. An algebraic
group $G$ is called an \emph{almost simple algebraic group} if $G^\circ$ is
simple and $G/Z(G^\circ)$ embeds into $\Aut(G^\circ)$. Thus, $G$ is an extension
of $G^\circ$ by a subgroup of its group of graph automorphisms (see, e.g.,
\cite[Thm.~11.11]{MT}).
As a matter of convention, a ``reductive subgroup'' of an algebraic group will
always mean a closed subgroup whose unipotent radical is trivial. In particular,
a reductive group may be disconnected. For an algebraic group $H$, we write
$R_u(H)$ to denote the unipotent radical of~$H$. Throughout, all $kG$-modules
are rational, as are all extensions, and cohomology groups are those associated
to rational cocycles.

Let us point out that for the question treated here, the precise isogeny type of
the ambient simple algebraic group $G^\circ$ will not matter, as isogenies
preserve parabolic subgroups and induce isomorphisms on the unipotent variety
and so also preserve regular unipotent elements. (If $G$ is almost simple and
$p$ does not divide the order of the fundamental group of $G^\circ$, the
natural map $G\to G/Z(G^\circ)$ induces an isogeny of $G^\circ$ onto its
adjoint quotient, preserving regular unipotent elements in $G$; in the general
case, a reduction to $G^\circ$ of adjoint type is given in \cite[I.1.7]{Sp82}.)
In particular, for $G$ a
classical type simple algebraic group we will argue for the groups $\SL(V)$,
$\Sp(V)$ and $\SO(V)$, and for the groups of type $B_l$ and $C_l$ defined over
$k$ of characteristic $2$, we may choose to work with whichever group is more
convenient under the given circumstances.

We start by making two useful observations which will simplify the later
analysis.

\begin{rem}   \label{rem:char 0}
In the situation of Theorem~\ref{thm:main}, assume that $p=0$. As $|X:X^\circ|$
is finite, some power of a regular unipotent element $u\in X$ will lie in
$X^\circ$. In characteristic~0 any power of a regular unipotent element is
again regular unipotent, so here we are thus reduced to studying the
\emph{connected} reductive subgroup $X^\circ$ satisfying the same assumptions.
In that case, the conclusion of Theorem~\ref{thm:main} was established in
\cite[Thm.~1.2]{TZ13}. Hence, in proving Theorem~\ref{thm:main} we may assume
$p>0$ whenever convenient. Furthermore, we will assume without loss of
generality that $X=X^\circ\langle u\rangle$.
\end{rem}

\begin{rem}   \label{rem:transitive}
 Let $X=X^\circ\langle u\rangle$ be a reductive subgroup of a connected
 reductive group $G$ such that $u$ is regular unipotent in $G$ and
 $[X^\circ,X^\circ]\ne1$. Let $X_1$ be one of the simple components of
 $[X^\circ,X^\circ]$ and set $H:=\langle X_1,u\rangle$. Then
 $H^\circ=\prod_i X_1^{u^i}$, so $\langle u\rangle$
 acts transitively on the set of simple components of $H^\circ$, and if $X$
 lies in a proper parabolic subgroup of $G$, then so does $H$. Thus,
 when proving Theorem~\ref{thm:main} we may as well assume that the simple
 components of $X^\circ$ are permuted transitively by $u$.
\end{rem}

\subsection{Jordan forms and tensor products}

The following elementary fact will be used throughout (see also
\cite[Lemma~1.3(i)]{SS97}):

\begin{lem}   \label{lem:power}
 Assume that $p>0$ and let $u\in\SL(V)$ be unipotent with a single Jordan block.
 Write $\dim V=ap+b$ with $0\le b<p$. Then $u^p$ has $p$ Jordan blocks, $b$ of
 size~$a+1$ and the other $p-b$ of size~$a$.
\end{lem}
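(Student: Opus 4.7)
The plan is to reduce the computation of Jordan blocks of $u^p$ to that of the nilpotent part. Write $u = I + N$ in a basis $e_1,\ldots,e_n$ (where $n=\dim V=ap+b$) such that $N$ is the standard single Jordan block, i.e.\ $Ne_i = e_{i-1}$ for $i\ge2$ and $Ne_1=0$.

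First I would invoke the fact that $\binom{p}{k}\equiv 0\pmod p$ for $1\le k\le p-1$ to conclude
\[
 u^p=(I+N)^p = I + N^p.
\]
Since $u^p$ and $I+N^p$ differ by a scalar, they have the same Jordan block structure, so the Jordan block sizes of $u^p$ coincide with those of the nilpotent operator $N^p$.

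Next I would determine the Jordan structure of $N^p$ explicitly by partitioning the basis according to residues mod $p$. For each $r\in\{1,\ldots,p\}$, the chain
\[
 e_r,\ e_{r+p},\ e_{r+2p},\ldots
\]
is stable under $N^p$ (which acts by $N^p e_i = e_{i-p}$, with $e_j=0$ for $j\le 0$), and these $p$ chains span $V$. The length of the $r$-th chain is $\lfloor (n-r)/p\rfloor+1$. Substituting $n=ap+b$: for $1\le r\le b$ this length equals $a+1$, and for $b+1\le r\le p$ it equals $a$. Hence $N^p$ has exactly $b$ Jordan blocks of size $a+1$ and $p-b$ of size $a$, matching the claim (and confirming $b(a+1)+(p-b)a = ap+b = n$).

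There is no real obstacle; the only thing to be careful about is the freshman's-dream step requiring characteristic $p>0$ (which is exactly the hypothesis) and the passage from $u^p$ to $N^p$ on the level of Jordan canonical form. Everything else is a direct basis computation.
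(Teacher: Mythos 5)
Your proof is correct: the freshman's-dream identity $(I+N)^p=I+N^p$ in characteristic $p$, followed by the decomposition of the basis into the $p$ chains $e_r,e_{r+p},e_{r+2p},\ldots$ with the length count $\lfloor (n-r)/p\rfloor+1$, is exactly the standard argument, and the block count checks out. The paper itself gives no proof, simply citing \cite[Lemma~1.3(i)]{SS97}, so your write-up supplies the expected elementary verification; the only stylistic quibble is the phrase ``differ by a scalar'' --- you mean that $u^p=I+N^p$ and $N^p$ differ by the identity, so their Jordan block sizes agree.
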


\begin{lem}   \label{lem:tens 1}
 Let  $u\in\SL(V)$ be a unipotent element with a single Jordan block of
 size~$n=\dim V$, or with two Jordan blocks of sizes $n-1,1$ or  $n-2,2$. If $u$
 preserves a non-trivial tensor product decomposition of $V$ then $\dim V=4$ and
 $u$ has two Jordan blocks on $V$. If $p=2$ these are of sizes $2,2$.
\end{lem}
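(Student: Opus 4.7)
The plan is to reduce the lemma to a concrete analysis of how tensor products of Jordan blocks behave in characteristic $p$. Suppose $V = V_1 \otimes V_2$ with $n_i := \dim V_i \ge 2$ is a non-trivial tensor decomposition preserved by $u$. The stabilizer of such a decomposition in $\GL(V)$ is $(\GL(V_1) \otimes \GL(V_2)) \rtimes \langle\sigma\rangle$, with the factor-swapping involution $\sigma$ present only when $n_1 = n_2$. I would first handle the swap-involving case by passing to $u^2 \in \GL(V_1) \otimes \GL(V_2)$, which is still unipotent and whose Jordan type is controlled by Lemma~\ref{lem:power}; in the main case I may write $u = u_1 \otimes u_2$ with each $u_i \in \GL(V_i)$ unipotent.

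Since $u$ has at most two Jordan blocks on $V$, $\dim V^u \le 2$, and from $V_1^{u_1} \otimes V_2^{u_2} \subseteq V^u$ we deduce $k_1 k_2 \le 2$, where $k_i$ is the number of Jordan blocks of $u_i$. After swapping factors if necessary, $k_1 = 1$, so $u_1 = J_{n_1}$ is a single Jordan block. If $k_2 \ge 2$, decompose $u_2 = \bigoplus_j J_{a_j}$ and distribute to obtain $u = \bigoplus_j (J_{n_1} \otimes J_{a_j})$. The standard lower bound that $J_{n_1} \otimes J_{a_j}$ has at least $\min(n_1, a_j)$ Jordan blocks (valid in every characteristic) forces $u$ to have at least three blocks unless $u_2 = I$. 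In that degenerate configuration $u = J_{n_1} \otimes I$ has Jordan type consisting of $n_2$ copies of $J_{n_1}$, and this matches the hypothesis only for $n_1 = n_2 = 2$, yielding the conclusion with blocks $(2,2)$ in $\dim V = 4$.

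In the remaining case $u_2$ is also a single block $J_{n_2}$, so $u = J_{n_1} \otimes J_{n_2}$ with $n_1, n_2 \ge 2$. The same lower bound gives $\min(n_1, n_2) \le 2$; after a further swap, $n_1 = 2$. The heart of the proof is now an explicit Jordan-form computation for $J_2 \otimes J_{n_2}$ in characteristic $p$. Writing $N_i := u_i - I$, we have $N_1^2 = 0$; setting $A := N_1 \otimes u_2$ and $B := I \otimes N_2$, these commute (because $[u_2, N_2] = 0$), satisfy $A^2 = 0$, and $u - I = A + B$. Binomial expansion then gives $(u - I)^k = B^k + k\, A B^{k-1}$, and computing $AB^{k-1} = N_1 \otimes (N_2^{k-1} + N_2^k)$ shows that the nilpotency index of $u-I$ equals $n_2 + 1$ when $p \nmid n_2$ (producing blocks $(n_2+1, n_2-1)$) and equals $n_2$ when $p \mid n_2$ (producing blocks $(n_2, n_2)$).

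Finally I would match these two possible Jordan types against the hypothesis list $\{(n), (n-1,1), (n-2,2)\}$ with $n = 2n_2$. After pruning, this forces $n_2 = 2$, and the two surviving configurations are blocks $(3, 1)$ when $p \ne 2$ (from $(n_2+1, n_2-1)$) or $(2, 2)$ when $p = 2$ (from $(n_2, n_2)$), both in $\dim V = 4$, which is exactly the stated conclusion. The main obstacle is the Jordan-form computation for $J_2 \otimes J_{n_2}$ together with carefully excluding the borderline match at $n_2 = 3$ (where the pair $(n_2+1, n_2-1) = (4, 2)$ nominally fits $(n-2, 2)$ with $n = 6$), which must be ruled out by an additional structural constraint implicit in the hypothesis on $u$.
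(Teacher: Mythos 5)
Your reduction is essentially a self-contained reproof of the facts the paper simply imports from \cite[Lemma~1.5]{SS97}: the paper's entire argument is a one-line citation of that lemma, whereas you rederive the block count $\min(a,b)$ for $J_a\otimes J_b$ (which is exact, and valid in all characteristics), correctly eliminate the configurations with a reducible tensor factor, and your computation that $J_2\otimes J_{n_2}$ has Jordan type $(n_2+1,n_2-1)$ when $p\nmid n_2$ and $(n_2,n_2)$ when $p\mid n_2$ is right. (The factor-swapping case you mention but do not carry out is harmless: in every application in the paper ``preserves'' means $u=u_1\otimes u_2$, not merely normalising the stabiliser of the decomposition.)

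The genuine gap is precisely the point you flag in your last sentence and then defer to ``an additional structural constraint implicit in the hypothesis'': no such constraint exists. The element $u=J_2\otimes J_3\in\SL_6$ has Jordan type $(4,2)=(n-2,2)$ with $n=6$ for every $p\ne3$ (by your own formula, or by Clebsch--Gordan in characteristic $0$: $J_2\otimes J_3\cong J_4\oplus J_2$), it is unipotent, and it visibly preserves a non-trivial tensor decomposition of a $6$-dimensional space. So the $n_2=3$ configuration cannot be excluded; it is a genuine exception to the statement as literally written, and what your argument actually proves is that the conclusion must be enlarged to admit the additional case $\dim V=6$, blocks $(4,2)$, $p\ne3$. (The paper's citation of \cite[Lemma~1.5]{SS97} overlooks the same configuration; one can check that wherever the lemma is invoked with the $(n-2,2)$ hypothesis the relevant space has dimension at least~$7$, or $u$ in fact has a single block, so no downstream result is affected --- but that is a fact about the applications, not about the lemma.) Your proof is therefore incomplete as a proof of the stated lemma, not because any step is wrong, but because the case you hoped to rule out is a counterexample rather than an artifact; you should either record it as an extra case in the conclusion or locate a hypothesis that genuinely excludes it, and no amount of further Jordan-form analysis will do the latter.
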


\begin{proof}
Using the description of Jordan block sizes of unipotent elements in tensor
products given in \cite[Lemma~1.5]{SS97} we see that necessarily $\dim V=4$ and
either $u$ has Jordan block sizes $2,2$, or $p\ne2$ and $u$ has Jordan block
sizes $3,1$, as claimed.
\end{proof}

Before establishing a useful consequence of Lemma~\ref{lem:tens 1}, we recall
the following well-known Clifford theoretic result, see, e.g.,
\cite[Prop.~2.6.2]{BGMT}:

\begin{lem}   \label{lem:cyc ext}
 Let $N\unlhd H$ be groups with $H/N$ finite cyclic and $V$ be a
 finite-dimensional irreducible $kH$-module. Then $V|_N=\bigoplus_g U^g$, where
 $U$ is any irreducible $kN$-submodule of $V$ and $g$ runs over a system of
 coset representatives of the stabiliser of $U$ in $H$. Moreover, the $U^g$ are
 pairwise non-isomorphic $kN$-modules.
\end{lem}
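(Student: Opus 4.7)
The plan is to carry out a standard Clifford-theoretic decomposition, with the cyclicity of $H/N$ entering only at the final step in order to rule out higher multiplicities.

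First I would introduce $T=\{h\in H : U^h\cong U\}$, the inertia group of $U$ in~$H$, where $U^h$ is viewed as a $kN$-module. Then $N\subseteq T$ and $T/N$ is a subgroup of the cyclic group $H/N$, hence itself cyclic. For every $h\in H$ the conjugate submodule $U^h\subseteq V$ is irreducible as a $kN$-module, and $\sum_{h\in H}U^h$ is a nonzero $H$-stable subspace of $V$; by irreducibility of $V$ as a $kH$-module, this sum equals $V$. Grouping the conjugates according to cosets of $T$ rewrites this as $V=\sum_g U^g$, where $g$ runs over a set of coset representatives of $T$ in~$H$.

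Next I would observe that for $g_1T\ne g_2T$ the irreducible $kN$-modules $U^{g_1}$ and $U^{g_2}$ are non-isomorphic by the very definition of~$T$. Since a sum of pairwise non-isomorphic irreducible submodules is automatically direct, this gives $V\supseteq\bigoplus_g U^g$, and at the same time yields the second assertion of the lemma.

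The hard part is the reverse inclusion, i.e.\ showing that each $U^g$ appears in $V|_N$ with multiplicity exactly one. Let $V_U$ denote the $U$-isotypic component of $V|_N$, so $V_U|_N\cong U^{\oplus e}$ for some $e\ge1$; standard Clifford theory shows that $V_U$ is an irreducible $kT$-module and $V\cong\operatorname{Ind}_T^H V_U$. To prove $e=1$ I would invoke the cyclicity of $T/N$ via the vanishing of the Schur multiplier $H^2(T/N,k^\times)$ over the algebraically closed field~$k$: for a cyclic group of order $n$ with trivial coefficient action this group equals $k^\times/(k^\times)^n$, which is trivial since every element of $k^\times$ admits an $n$-th root. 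This vanishing guarantees that the $kN$-action on~$U$ extends to a $kT$-module $\tilde U$, and then every irreducible $kT$-module whose restriction to~$N$ is $U$-isotypic is a twist $\tilde U\otimes\chi$ by a character $\chi$ of $T/N$; each such twist satisfies $(\tilde U\otimes\chi)|_N\cong U$. Applied to $V_U$ this forces $e=1$, so $V=\bigoplus_g U^g$, completing the argument.
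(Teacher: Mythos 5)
The paper does not prove this lemma at all: it is quoted as a known Clifford-theoretic fact with a citation to \cite[Prop.~2.6.2]{BGMT}, so there is no in-paper argument to compare against. Your proof is a correct, self-contained derivation along the standard lines that such references use: transitivity of $H$ on the $N$-isotypic components, irreducibility of the component $V_U$ over the inertia group $T$, and then multiplicity one via the vanishing of $H^2(T/N,k^\times)$ for $T/N$ cyclic and $k$ algebraically closed, followed by the Gallagher-type description of the irreducible $kT$-modules lying over $U$ as one-dimensional twists of an extension $\tilde U$. All of this goes through in arbitrary characteristic, including $p\mid |T/N|$, since irreducible modules of the abelian group $T/N$ over $k$ are still one-dimensional.

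Two small points of hygiene. First, the sentence ``grouping the conjugates according to cosets of $T$ rewrites this as $V=\sum_g U^g$'' overstates what grouping gives you: $\sum_{h\in gT}U^h$ is a priori the whole $U^g$-isotypic component, not a single copy of $U^g$; you do recognise this immediately afterwards and treat multiplicity one as the real content, so this is only a slip of phrasing, but the earlier sentence should be weakened to $V=\sum_g V_{U^g}$. Second, the cocycle construction behind the $H^2$ obstruction silently uses that $\operatorname{End}_{kN}(U)=k$, i.e.\ Schur's lemma for the finite-dimensional irreducible $U$ over the algebraically closed field $k$; this is where algebraic closedness enters a second time (besides the divisibility of $k^\times$) and is worth saying explicitly.
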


We now show the desired corollary of Lemma~\ref{lem:tens 1}:

\begin{lem}   \label{lem:tensor}
 Let $H\le\SL(V)$ be connected reductive with non-trivial derived subgroup and
 assume that $V|_H$ is completely reducible and homogeneous. If $H$ is
 normalised by a unipotent element $u\in\SL(V)$ with a single Jordan block of
 size~$n=\dim V$, or with two Jordan blocks of sizes $n-1,1$ or $n-2,2$ then
 either $V$ is an irreducible $H$-module, or $\dim V=4$, $H\langle u\rangle$
 preserves a non-trivial tensor product decomposition of $V$, and $u$ has two
 Jordan blocks on $V$, of sizes $2,2$ if $p=2$.
\end{lem}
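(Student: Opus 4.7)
The plan is to extract a tensor decomposition of $V$ from $H$-homogeneity, verify that it is preserved by $u$ via a Schur-type argument, and then apply Lemma~\ref{lem:tens 1}.

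Suppose $V$ is not irreducible as an $H$-module. Complete reducibility together with homogeneity yield $V|_H\cong W^m$ for an irreducible $H$-module $W$ and an integer $m\ge 2$, and setting $U := \Hom_H(W,V)$ with trivial $H$-action gives a canonical $H$-equivariant isomorphism $V\cong W\otimes U$ under which $H$ acts via $\rho\otimes 1$, where $\rho\colon H\to\GL(W)$ is the representation on~$W$. Since $H\le\SL(V)$ acts faithfully and $[H,H]\ne 1$, the constituent $W$ cannot be one-dimensional, for then $H$ would embed into $k^*$, forcing $[H,H]=1$; thus $\dim W\ge 2$ and $\dim U=m\ge 2$, so the tensor decomposition is non-trivial.

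Next I would show that $u$ preserves this decomposition. Let $\phi\in\Aut(H)$ denote conjugation by $u$. A direct check shows that $u\colon V\to V^\phi$ is an $H$-equivariant isomorphism, so $V^\phi\cong V$ as $H$-modules, and by homogeneity $W^\phi\cong W$. Hence there exists $\tilde u\in\GL(W)$ with $\tilde u\,\rho(h)\,\tilde u^{-1}=\rho(\phi(h))$ for all $h\in H$. Both $\tilde u\otimes 1$ and $u$ then induce the same automorphism $\phi$ on $H\subseteq\GL(W)\otimes 1$, so $(\tilde u\otimes 1)^{-1}u$ lies in $C_{\GL(V)}(H)$, which by Schur's lemma equals $1\otimes\GL(U)$. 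Therefore $u=\tilde u\otimes u'$ for some $u'\in\GL(U)$, and since $H\subseteq\GL(W)\otimes 1$ we conclude that $H\langle u\rangle$ stabilises $V=W\otimes U$.

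Finally, Lemma~\ref{lem:tens 1} applied to $u$—which has one of the prescribed Jordan block shapes and now preserves the non-trivial tensor factorisation $V=W\otimes U$—forces $\dim V=4$ and the required Jordan structure on $u$, completing the proof. The main point needing care is the Schur-type construction of $\tilde u$, which hinges on the $H$-homogeneity of $V$ guaranteeing $W^\phi\cong W$; everything else is straightforward bookkeeping via the absolute irreducibility of $W$ and the resulting computation of $C_{\GL(V)}(H)$.
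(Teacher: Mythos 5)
Your proof is correct and follows essentially the same route as the paper: both extract the tensor decomposition $V\cong W\otimes\Hom_H(W,V)$ from homogeneity, show that $u$ respects it, and then invoke Lemma~\ref{lem:tens 1}. The only difference is that you prove the $u$-stability of the decomposition by hand via the Schur-lemma computation of $C_{\GL(V)}(H)$, where the paper simply cites \cite[Prop.~18.1]{MT} for this standard Clifford-theoretic fact.
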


\begin{proof}
Let $u\in\SL(V)$ be the unipotent element normalising $H$ as in the assumption.
Let $V_1\le V$ be an irreducible $H\langle u\rangle$-submodule of $V$. As $V$ is
homogeneous as an $H$-module, Lemma~\ref{lem:cyc ext} shows that $V_1|_H$ is
irreducible. Since $[H,H]\ne1$, we have $\dim V_1>1$. Then with
$W=\Hom_H(V_1,V)$ we have $V\cong V_1\otimes W$ as an $H$-module, and this
decomposition is stabilised by $u$ (see e.g.~\cite[Prop.~18.1]{MT}). Applying
Lemma~\ref{lem:tens 1}, this implies that either $\dim W=1$, whence $V=V_1$ is
irreducible for $H$, or we are in the exceptional case of that result, as in the
conclusion.
\end{proof}

The proof of the next result is modelled after the proof of
\cite[Prop.~2.1]{SS97} which treats a more special situation:

\begin{lem}   \label{lem:tens 2}
 Assume $p>0$ and let $X=X^\circ\langle u\rangle\le\SL(V)$ be a reductive
 subgroup and $u\in\SL(V)$ a unipotent element with a single Jordan block of
 size~$n:=\dim V>1$, or with two Jordan blocks of sizes $n-1,1$ or $n-2,2$. If
 $X^\circ$ acts irreducibly on $V$, then either $[X^\circ,X^\circ]$ is simple,
 or one of the following holds:
 \begin{enumerate}[\rm(1)]
  \item $\dim V=4$, $X$ preserves a non-trivial tensor decomposition of~$V$ and
   $u$ has two Jordan blocks on $V$. If $p=2$ these are of size $2,2$;
  \item $p\in\{2,3\}$, $V=V_1\otimes\cdots\otimes V_p$ as an $X^\circ$-module
   with $\dim V_i=2$, $[X^\circ,X^\circ]=A_1^p$, $u$ permutes both sets of
   factors transitively and has a single Jordan block on $V$. Moreover, $u^p$
   has a single Jordan block on each $V_i$; or
  \item $p=2$, $V=V_1\otimes V_2$ as an $X^\circ$-module with $\dim V_i=3$, $u$
   has Jordan blocks of sizes $8,1$ on $V$ and $u^2$ has a single Jordan block
   on each $V_i$.
 \end{enumerate}
 Here, in $(2)$ and $(3)$, $X$ does not preserve the stated tensor product
 decomposition of~$V$.
\end{lem}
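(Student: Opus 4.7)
The plan is to first restrict from $X^\circ$ to $Y:=[X^\circ,X^\circ]$. Since $V|_{X^\circ}$ is irreducible, Schur's lemma forces the central torus $Z(X^\circ)^\circ$ to act on $V$ by scalars, so every $Y$-submodule of~$V$ is $X^\circ$-stable and hence $V|_Y$ is already irreducible. Writing $Y=X_1\cdots X_k$ as a commuting product of simple algebraic subgroups and correspondingly $V=V_1\otimes\cdots\otimes V_k$ with each $V_i$ a non-trivial irreducible $X_i$-module (so $\dim V_i\ge 2$), the case $k=1$ gives the desired conclusion that $[X^\circ,X^\circ]$ is simple. So assume $k\ge 2$; then $u$ permutes $\{X_1,\ldots,X_k\}$, and the argument splits according to the orbit structure.

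If $u$ has at least two orbits, I would partition $\{X_1,\ldots,X_k\}$ into two non-empty $u$-invariant subsets $\mathcal{A}\sqcup\mathcal{B}$ and set $V_\mathcal{A}=\bigotimes_{X_i\in\mathcal{A}}V_i$, $V_\mathcal{B}=\bigotimes_{X_i\in\mathcal{B}}V_i$. Then $V=V_\mathcal{A}\otimes V_\mathcal{B}$ is a $u$-stable non-trivial tensor decomposition with both factors of dimension at least~$2$, so Lemma~\ref{lem:tens 1} forces $\dim V=4$ with $u$ having two Jordan blocks. Combined with $\dim V_i\ge 2$ for each~$i$ and $k\ge 2$, this yields $k=2$ and $\dim V_1=\dim V_2=2$, which is case~(1).

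Otherwise $u$ acts as a single $k$-cycle on $\{X_1,\ldots,X_k\}$. Since $u$ is a $p$-element, $k$ is a power of~$p$; conjugation by powers of~$u$ identifies each $V_i$ with a common $d$-dimensional space $W$ (with $d\ge 2$), so $V\cong W^{\otimes k}$. The key observation is that $u^k$ normalises each~$X_i$ individually, so, modulo the scalar centre, it has the form $T_1\otimes\cdots\otimes T_k$ on $V_1\otimes\cdots\otimes V_k$ with the $T_i$ all $u$-conjugate and therefore sharing a common Jordan type~$T$. Hence the Jordan type of $u^k$ on~$V$ is that of $T^{\otimes k}$. On the other hand, the Jordan type of $u^k$ is directly computable from the hypothesis on~$u$: for a single Jordan block of size $n=d^k$ the identity $(I+e)^{p^s}=I+e^{p^s}$ in characteristic~$p$, together with iterated application of Lemma~\ref{lem:power}, gives it explicitly, and similar reasoning handles the Jordan types $(n-1,1)$ and $(n-2,2)$.

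The main technical burden is then to match the two descriptions of $u^k$. First, I would rule out $k=p^s$ with $s\ge 2$ by block-count arithmetic: writing $T^{\otimes p^s}=(T^{\otimes p^{s-1}})^{\otimes p}$ and tracking the growth of the number of Jordan blocks under tensoring yields an incompatibility with the $p^s$ blocks predicted for~$u^{p^s}$. Hence $k=p$. A case-by-case analysis of admissible $(d,p,T)$ for $k=p$ (using the known Jordan type of tensor products of unipotent Jordan blocks in small characteristic) leaves only the configurations $(d,p)=(2,2),(2,3),(3,2)$ with $T$ a single Jordan block of size~$d$; these correspond exactly to cases~(2) and~(3), and in particular $u$ of Jordan type $(n-2,2)$ is excluded in this setting. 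The final assertion that $X$ does not stabilise the stated tensor decomposition in~(2) and~(3) is immediate, since $u$ cyclically permutes the factors. The principal obstacle throughout is this last step, namely the Jordan-block arithmetic in positive characteristic required both to eliminate $k>p$ and to pin down the three admissible configurations.
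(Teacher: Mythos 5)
Your reduction to $Y=[X^\circ,X^\circ]$, the decomposition $V=V_1\otimes\cdots\otimes V_k$, and the treatment of the case where $u$ has at least two orbits on the simple factors all match the paper's argument and are correct. The problem lies in the transitive case: you have correctly identified the object to study ($u^k$ is, up to scalars, a tensor product $T_1\otimes\cdots\otimes T_k$ of unipotent $d\times d$ matrices), but you then defer the entire content of the lemma to an unexecuted computation, declaring that the ``main technical burden'' is to match the Jordan type of $u^k$ obtained from Lemma~\ref{lem:power} against that of $T^{\otimes k}$, to be settled by ``block-count arithmetic'' and a ``case-by-case analysis of admissible $(d,p,T)$''. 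This is a genuine gap. The elimination has to cover infinitely many configurations (all $d\ge2$, all primes $p$, all $k=p^a$), so no finite case analysis can close it without a general principle; and the invariant you name for ruling out $a\ge2$ --- the number of Jordan blocks of a tensor power --- requires the actual modular decomposition of $T^{\otimes k}$ (e.g.\ $J_2^{\otimes 4}\cong J_2^{\oplus 8}$ when $p=2$), which is precisely the computation you have not supplied and which is delicate for general $d$ and $p$.

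The paper closes the transitive case with a much blunter invariant: the order of $u$. Since $u$ has a Jordan block of size at least $n-2=m^{p^a}-2$, its order is at least $m^{p^a}-2$; on the other hand $u^{p^a}=T_1\otimes\cdots\otimes T_{p^a}$ has order at most $p^b$, where $p^b$ is the least power of $p$ with $p^b\ge m$ (the order of a tensor product of unipotent matrices is the maximum of the orders of the factors), whence $|u|\le p^{a+b}$. The inequality $p^{a+b}\ge m^{p^a}-2$ together with $p^{b-1}<m$ admits only the solutions $m=2$, $a=b=1$, $p\le3$ and $m=3$, $a=1$, $b=p=2$, which are exactly cases (2) and (3); the residual assertions about Jordan forms then do follow from Lemma~\ref{lem:power} and the tensor-product block data of \cite{SS97}, as you anticipate. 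If you replace ``number of Jordan blocks'' by ``order of $u^{p^a}$'' throughout your third and fourth paragraphs, your argument collapses to the paper's and becomes complete.
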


\begin{proof}
Note that $[X^\circ,X^\circ]\ne1$ as $\dim V>1$ and $X^\circ$ acts irreducibly.
Write $[X^\circ,X^\circ]=X_1\cdots X_s$ with simple algebraic groups $X_i$, so
$V=V_1\otimes\cdots\otimes V_s$ with non-trivial irreducible $X_i$-modules
$V_i$. Now $u$ permutes the factors $X_i$ and their corresponding tensor factors
$V_i$. Assume that $s>1$. If $u$ has at least two orbits on the set of $X_i$,
this yields a corresponding $u$-invariant tensor decomposition of $V$. By
Lemma~\ref{lem:tens 1} we reach case~(1). 
\par
Henceforth, we may assume that $u$ permutes the $X_i$, and thus the $V_i$,
transitively. In particular all $V_i$ have the same dimension~$m$, that is,
$\dim V=m^s$, and $s=:p^a>1$ is a power of $p$. Let $b$ be minimal with
$p^b\ge m$. Now, $u^{p^a}$ stabilises all $V_i$, so is a tensor product of
matrices of size $m$ and thus of order at most $p^b$. Hence $|u|$ divides
$p^{a+b}$. On the other hand,
$$p^{a+b}\ge |u|\ge\dim V-2=m^s-2= m^{p^a}-2 > p^{(b-1)p^a}-2.$$
The above conditions imply that either $a=b=1$, $m=2$ and $s=p\le3$, or $a=1$,
$b=s=p=2$ and $m=3$.   \par
In the first case, $\dim V=m^s=2^p$, $p\le3$, and as $m=2$ all simple factors
$X_i$ of $[X^\circ,X^\circ]$ must have type $A_1$, as in (2). The statement
about the Jordan form of $u$ follows from Lemma~\ref{lem:power}. \par 
In the second case we have $\dim V=9$, and our inequalities force that $|u|=8$
and hence $u$ has Jordan blocks of sizes $8,1$ or $7,2$ and by
Lemma~\ref{lem:power}, $u^2$ has Jordan blocks of sizes~$4,4,1$,
respectively~$4,3,1,1$. The latter cannot arise as the block sizes of a tensor
product of two $3\times3$ unipotent matrices by \cite[Lemma~1.5]{SS97}, so we
are in the former case and $u^2$ has a single Jordan block on each $V_i$, as
in~(3).
\end{proof}

\subsection{On subgroups containing regular unipotent elements}

For connected groups, the following result from \cite[Lemma~2.6]{TZ13} will be
useful:

\begin{lem}   \label{lem:reg in Levi}
 Let $G$ be connected reductive, $P\le G$ a parabolic subgroup with Levi
 complement $L$ and assume that $u\in P$ is regular unipotent in $G$. Then the
 image of  $u$ is regular unipotent in $L$ and hence in each simple factor of
 $[L,L]$.
\end{lem}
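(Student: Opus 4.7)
The plan is to reduce the statement to a coordinate computation inside a Borel subgroup of $P$ and then to invoke the standard description of regular unipotent elements in terms of Chevalley coordinates at the simple roots. First, since $u$ is unipotent in the connected group $P$, it lies in some Borel subgroup of $P$; Borels of $P$ are $P$-conjugate and coincide with the Borels of $G$ contained in $P$, so after a $P$-conjugation (which preserves both the hypothesis on $u$ and the conclusion) I may assume $u\in B$ with $B=TU$ a Borel of $G$ inside $P$, $T\le L$ a maximal torus, and $U=R_u(B)$. Since any unipotent element of a connected solvable group lies in its unipotent radical, $u\in U$. The Levi decomposition $U=U_L\cdot R_u(P)$, with $U_L:=U\cap L$ the unipotent radical of the Borel $B\cap L$ of $L$, identifies the image $\bar u$ of $u$ in $L=P/R_u(P)$ with the $U_L$-component $u_L$ of $u$.

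Next I would invoke the standard characterisation (due to Steinberg): writing
\[
u=\prod_{\alpha\in\Phi^+}u_\alpha(c_\alpha)
\]
in Chevalley coordinates, for some fixed ordering of the positive roots $\Phi^+$, the element $u$ is regular unipotent in $G$ if and only if $c_\alpha\neq 0$ for every simple root $\alpha$ of $G$. The parabolic $P$ corresponds to a subset $I$ of the simple roots, which is precisely the set of simple roots of $L$, and the projection $U\to U_L$ kills the root subgroups $U_\alpha$ for $\alpha\notin\ZZ I$. Hence $u_L=\prod_{\alpha\in\Phi^+\cap\ZZ I}u_\alpha(c_\alpha)$, and its simple-root coefficients are exactly the $c_\alpha$ with $\alpha\in I$, all nonzero. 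Therefore $\bar u=u_L$ is regular unipotent in $L$.

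For the final assertion, $[L,L]$ is the central product of simple factors $L_1,\ldots,L_r$ corresponding to the connected components $I_1,\ldots,I_r$ of the Dynkin diagram of $L$, and $u_L$ decomposes accordingly as $u_L=u_1\cdots u_r$ with $u_j$ lying in the unipotent radical of a Borel of $L_j$. The simple-root coordinates of $u_j$ are precisely the $c_\alpha$ for $\alpha\in I_j$, and they are nonzero, so each $u_j$ is regular unipotent in $L_j$. The only delicate point in this scheme is the initial $P$-conjugation placing $u$ into the Borel $B$; everything else is straightforward bookkeeping in root coordinates, and I do not foresee any serious obstacle.
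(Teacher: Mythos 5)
Your argument is correct. Note that the paper does not prove this lemma at all: it is quoted verbatim from \cite[Lemma~2.6]{TZ13}, so there is no in-paper proof to compare against. Your route is the standard one and is in fact the same circle of ideas the paper itself deploys in the proof of Lemma~\ref{lem:Iulian}, namely the Springer--Steinberg characterisation \cite[Ch.~III, 1.13]{SpSt} of regular unipotent elements as those products $\prod_\al u_\al(c_\al)$ with $c_\al\ne0$ at every simple root. All the reductions are sound: conjugating by $P$ preserves both hypothesis and conclusion since regular unipotency in $L$ is a conjugacy-invariant notion and $P$-conjugation induces $L$-conjugation on the quotient; unipotent elements of the connected solvable group $B$ do lie in $R_u(B)$; and $R_u(P)$ is normal in $U$, so the factorisation $U=(U\cap L)\ltimes R_u(P)$ identifies the image of $u$ in $L$ with its $U\cap L$-component. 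The only point you gloss over is that the coefficients $c_\al$ for non-simple $\al$ depend on the chosen ordering of $\Phi^+$; but the coefficients at the simple roots are intrinsic (read off in $U/\langle U_\al : \operatorname{ht}(\al)\ge2\rangle$), and the quotient map $U\to U/R_u(P)\cong U\cap L$ visibly preserves them for $\al\in I$, so the regularity of the image in $L$, and hence in each simple factor of $[L,L]$, follows as you say. This is a harmless omission, not a gap.
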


\begin{lem}   \label{lem:reg inner}
 Let $G$ be simple and $H\le G$ be a connected reductive subgroup normalised by
 a regular unipotent element $u$ of $G$. Assume that $H=Y_1\circ Y_2$ is a
 central product with $Y_1\ne1$ such that $u$ acts by an inner automorphism on
 $Y_1$. Then $Y_1=H$ contains a regular unipotent element of $G$.
\end{lem}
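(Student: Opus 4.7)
First I would write $u=yv$ where $y\in Y_1$ realises the inner action of $u$ on $Y_1$ (so $uzu^{-1}=yzy^{-1}$ for all $z\in Y_1$) and $v:=y^{-1}u\in C_G(Y_1)$; by construction $[y,v]=1$. Comparing Jordan decompositions in $u=yv$ and using unipotence of $u$ forces $y_sv_s=1$, so $y_s=v_s^{-1}\in Y_1\cap C_G(Y_1)\subseteq Z(Y_1)$. Since $y$ is determined only up to an element of $Z(Y_1)$, I absorb $y_s$ and assume both $y$ and $v$ are unipotent. Observe also that $Y_1$ must be non-toral: if $Y_1$ were a torus, the inner action of $u$ on $Y_1$ would be trivial, so $u$ would centralise $Y_1$ and the connected group $Y_1$ would then lie in $C_G(u)^\circ$, which is unipotent, forcing $Y_1=1$, contrary to hypothesis.

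The crux is to show $v=1$. Since $v$ is unipotent and commutes with the regular unipotent $u$, and since $C_G(u)^\circ$ is a connected abelian unipotent subgroup of $G$ of dimension $\operatorname{rank}(G)$, contained in the unipotent radical of the unique Borel containing $u$, we have $v\in C_G(u)^\circ$. Thus $v$ lies in the intersection $C_G(u)^\circ\cap C_G(Y_1)$. The plan is to show this intersection is trivial by combining two facts: $Y_1\le C_G(v)$, placing the non-toral reductive $Y_1$ in the centraliser of $v$; and $v\in C_G(u)^\circ$, which is the very small centraliser attached to the regular class. If $v\ne 1$, the non-trivial $\langle v\rangle^\circ\subseteq C_G(u)^\circ$ would be centralised by the non-toral $Y_1$; I would then invoke the rigidity of regular unipotent centralisers, via \cite[Thm~17.10, Cor~17.15]{MT} together with Lemma~\ref{lem:reg in Levi} applied to a parabolic arising from the configuration at $u$, to obtain a contradiction. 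I expect this step to be the main obstacle of the proof.

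Once $v=1$, we obtain $u=y\in Y_1$, so $Y_1$ contains the regular unipotent $u$ of $G$. Finally $Y_2$ centralises $Y_1\ni u$, whence $Y_2\subseteq C_G(u)$; the connected reductive $Y_2$ then lies in $C_G(u)^\circ$, which is unipotent. Since a connected reductive subgroup of a unipotent group is trivial, $Y_2=1$ and $Y_1=H$, completing the proof.
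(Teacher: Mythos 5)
Your opening reductions are sound and agree with the paper's: writing $u=yv$ with $y\in Y_1$ realising the inner action and $v\in C_G(Y_1)$, using commutativity of $y$ and $v$ to compare Jordan parts and replace both by their unipotent parts, and noting that $Y_1$ cannot be a torus. The problem is the crux, which you yourself flag as "the main obstacle": you aim to prove $v=1$, and the route you sketch does not close. From $v\ne1$ you would get $Y_1\langle u\rangle\le C_G(v)$, hence (by Borel--Tits) inside a proper parabolic subgroup with $u$ regular in the Levi --- but this configuration is not absurd on its face; ruling out reductive subgroups of parabolics containing regular unipotent elements is precisely the content of the whole paper and of \cite[Thm.~1.2]{TZ13}, and you cannot invoke that theorem for $Y_1$ until you know $Y_1$ contains a regular unipotent element of $G$, which is exactly what is to be proved. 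So the logical order of your plan is backwards, and neither \cite[Thm.~17.10, Cor.~17.15]{MT} nor Lemma~\ref{lem:reg in Levi} supplies the missing contradiction. (A small additional slip: in positive characteristic $v$ has finite order, so $\langle v\rangle^\circ=1$ and that object carries no information.)

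The paper's proof avoids $v=1$ entirely. It applies \cite[Lemma~2.4]{TZ13}: if a regular unipotent element is a product of two \emph{commuting} unipotent elements, then one of the factors is itself regular. Here $u=y\cdot v$ with $y,v$ commuting unipotent elements; $v$ centralises the non-trivial connected reductive group $Y_1$, whereas the centraliser of a regular unipotent element of $G$ is, modulo the finite centre, unipotent --- so $v$ cannot be regular, and therefore $y\in Y_1$ is regular. That is already the conclusion; one never needs $u\in Y_1$. (The claim $Y_2=1$ is obtained in the paper from \cite[Prop.~2.3]{TZ13} applied to $u\in Y_1\circ C(Y_1)$, or can be read off afterwards from $Y_2\le C_G(y)$ with $y$ regular; your derivation of it presupposes the unproven $u=y$.) If you want to repair your write-up while keeping its shape, replace the "show $v=1$" step by an appeal to (or a proof of) the commuting-factorisation lemma; as it stands the proposal has a genuine gap at its central step.
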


\begin{proof}
By assumption, $u$ acts as an inner automorphism on $Y_1$, say by $z\in Y_1$.
Thus, $uz^{-1}$ and $Y_2$ are contained in $C_H(Y_1)$ and so
$u\in H\langle u\rangle=(Y_1Y_2)\langle u\rangle
 \leq Y_1\circ C_H(Y_1)$. But then \cite[Prop.~2.3]{TZ13}
implies that $Y_2=1$. Now $u=z\cdot uz^{-1}$ is regular unipotent. Replacing
$uz^{-1}$ and $z$ by their unipotent parts respectively, we may assume both to
be unipotent and lying in a common Borel subgroup of $G$. As $uz^{-1}$
centralises $Y_1$ and thus isn't regular, $z\in Y_1$ must be regular by
\cite[Lemma~2.4]{TZ13}.
\end{proof}

\begin{lem}   \label{lem:Iulian}
 Let $G$ be simple and $H<G$ a connected reductive subgroup containing a
 regular unipotent element $u$ of $G$. Then $u$ is regular in $H$.
\end{lem}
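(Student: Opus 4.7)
The plan is to use the well-known characterisation that a unipotent element of a connected reductive group is regular if and only if it lies in a unique Borel subgroup. Since $u$ is regular unipotent in the simple group $G$, there is a unique Borel subgroup $B_G\leq G$ containing $u$, and the strategy is to identify the closed subgroup $B_H:=(B_G\cap H)^\circ$ as the unique Borel of $H$ containing~$u$.

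First I would show that any Borel $B$ of $H$ containing $u$ lies in $B_H$: such a $B$ is a connected solvable subgroup of $G$, hence is contained in some Borel of $G$, and uniqueness of $B_G$ forces that Borel to be $B_G$. Thus $B\leq B_G\cap H$, and connectedness of $B$ gives $B\leq B_H$. Next I would verify that $B_H$ itself is a Borel of $H$. It is closed, connected, and solvable (inheriting solvability from $B_G$), and it contains $u$, since any unipotent element of an algebraic group lies in its identity component (the component group being \'etale). As a connected solvable subgroup of $H$, $B_H$ is contained in some Borel $\tilde B$ of $H$; the first step applied to $\tilde B$ yields $\tilde B\leq B_H$, whence $\tilde B=B_H$. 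Combining the two steps, $B_H$ is the only Borel of $H$ containing $u$, so $u$ is regular in $H$.

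The main subtlety is verifying that $B_H$ is a full Borel of $H$ rather than a proper connected solvable subgroup of one; this is resolved by combining the two containments above, which rely entirely on the uniqueness of $B_G$. Everything else is routine once the right candidate Borel is identified.
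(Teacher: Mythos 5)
Your argument is correct in outline but follows a genuinely different route from the paper's. The paper assumes $u$ is not regular in $H$, invokes the Springer--Steinberg expression of $u$ as a product of root elements omitting some simple root of $H$, concludes that $u$ lies in the unipotent radical of a parabolic subgroup of $H$ with non-toral Levi factor, and then transfers this to $G$ via Borel--Tits to contradict regularity of $u$ in $G$. You instead exploit, in both directions, the characterisation of regular unipotent elements of a connected reductive group as those lying in a unique Borel subgroup; this yields a shorter and more conceptual proof that avoids Borel--Tits and explicit root-element computations, at the cost of importing that (standard, but unproved here) characterisation. One step of your write-up is wrongly justified: the claim that ``any unipotent element of an algebraic group lies in its identity component (the component group being \'etale)'' is false in characteristic $p>0$ --- a unipotent element of order $p$ generates a finite constant (hence \'etale) group whose identity component is trivial. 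Fortunately you do not need this claim: since $H$ is connected, $u$ lies in \emph{some} Borel subgroup $B$ of $H$, and your first step already gives $B\leq B_H=(B_G\cap H)^\circ$, so $u\in B_H$; with that substitution the rest of your argument (that $B_H$ is a Borel of $H$ and is the unique one containing $u$) goes through as written.
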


\begin{proof}
Let $B<H$ be a Borel subgroup of $H$ containing $u$. Assume $u$ is not regular
unipotent in $H$. By \cite[Ch.~III, 1.13]{SpSt} it may be written as a
product of root elements
$$u=\prod_{\al}u_\al(1)\prod_{\beta}u_\beta(c_\beta)\quad\text{for
  suitable $c_\beta\in k$},$$
where the first product runs over a proper subset of the simple roots of the
root system $\Phi$ of $H$ with respect to the pair $(T,B)$ where $T<B$ is some
maximal torus, and the second one over the roots in $\Phi^+$ of height at
least~2. Thus, $u$ lies in the unipotent radical of the parabolic subgroup of
$H$ whose Levi factor is generated by the root subgroups for the simple roots
not occurring in the representation of $u$ and their negatives, which thus is
not a torus.   \par
By Borel--Tits,
$u$ then also lies in the unipotent radical of a proper parabolic subgroup $P$
of $G$ with non-toral Levi factor. But then, when writing $u$ as a product of
root elements for $G$ with respect to a Borel subgroup contained in $P$, not
all simple roots can occur, whence $u$ is not regular in $G$. This
contradiction achieves the proof.
\end{proof}

We will make frequent use of the following result, the second part of which was
essentially shown by Saxl and Seitz \cite[Prop.~2.2]{SS97}:

\begin{prop}   \label{prop:new SS}
 Let $X=X^\circ\langle u\rangle$ be a reductive subgroup of the simple
 classical group $G=\SL(V)$, $\Sp(V)$ or $\SO(V)$ with $X^\circ$ simple and
 irreducible on $V$, where $\dim V\ge7$ when $G$ is of orthogonal type. If $X$
 contains a regular unipotent element $u$ of $G$, then $X^\circ$ acts tensor
 indecomposably on $V$.   \par
 Furthermore, either $X=B_3<G=D_4$, or $X$ and the highest weight of $X^\circ$
 on $V$ are as in Table~\ref{tab:SS97} (up to Frobenius twists and taking
 duals) and $u$ has a single Jordan block.
\end{prop}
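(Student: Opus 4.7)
The plan is to split the statement into its two assertions—first tensor indecomposability of $X^\circ$ on $V$, then the explicit classification together with the single Jordan block conclusion—and treat them in turn. By Remark~\ref{rem:char 0} I may assume $p>0$, which is the hypothesis of Lemma~\ref{lem:tens 1}. A regular unipotent $u\in G$ acts on $V$ either as a single Jordan block of size $n=\dim V$ (when $G=\SL(V)$, $\Sp(V)$, or $\SO_n(V)$ with $n$ odd) or with blocks of sizes $(n-1,1)$ (when $G=\SO_n(V)$ with $n$ even); in either case the Jordan type is of the form admitted by Lemma~\ref{lem:tens 1}.

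For tensor indecomposability I would argue by contradiction. Assume $V|_{X^\circ}\cong V_1\otimes V_2$ with both $V_i$ irreducible of dimension at least $2$. Since $X^\circ$ is simple, Steinberg's tensor product theorem forces the two factors to correspond to a nontrivial partition of the Frobenius-twist layers appearing in the $p$-adic expansion of the highest weight of $V$; in particular the unordered pair $\{V_1,V_2\}$ is uniquely determined. Every algebraic automorphism of $X^\circ$ factors as an inner automorphism followed by a Dynkin diagram automorphism, neither of which mixes distinct Frobenius-twist levels, so the automorphism of $X^\circ$ induced by conjugation by $u$ fixes each $V_i$ up to isomorphism. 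Consequently $u$ stabilises the tensor decomposition (acting as a tensor product of operators, up to a scalar on each factor), and Lemma~\ref{lem:tens 1} yields $\dim V=4$ with $u$ acting with two Jordan blocks. This contradicts the hypothesis $\dim V\ge 7$ for orthogonal $G$ and the fact that in $\SL_4$ and $\Sp_4$ a regular unipotent has a single Jordan block of size $4$.

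For the classification and the single-block statement I would invoke \cite[Prop.~2.2]{SS97}, which—now that tensor indecomposability is available—classifies the pairs $(X^\circ,V)$ with $X^\circ$ simple, irreducible and tensor indecomposable on $V$ and containing a regular unipotent of $G$, producing the entries of Table~\ref{tab:SS97} together with the $B_3<D_4$ exception and the assertion that $u$ has a single Jordan block outside that exception. To handle the possibly disconnected $X$, I would note that if $u\in X^\circ$ the result is immediate; if $u$ acts on $X^\circ$ by an inner automorphism, Lemma~\ref{lem:reg inner} (applied with $Y_2=1$) places a regular unipotent of $G$ inside $X^\circ$ and reduces to the connected case. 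The remaining possibility is that $u$ induces a nontrivial Dynkin diagram automorphism of $X^\circ$, which restricts $X^\circ$ to type $A_n$ $(n\ge 2)$, $D_n$, or $E_6$; inspecting Table~\ref{tab:SS97} against such a graph action together with the Jordan structure of the regular unipotent should leave only the triality embedding $B_3<D_4$ on the natural $8$-dimensional module of $D_4$.

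The hardest part will be this final disconnected inspection, where I must rule out further exotic configurations arising from graph-automorphism actions of $u$ on $X^\circ$ beyond $B_3<D_4$. The tensor indecomposability step is also somewhat delicate, relying on the uniqueness of the Steinberg decomposition and the precise description of algebraic automorphisms of a simple algebraic group, in order to conclude rigorously that $u$ preserves each tensor factor individually.
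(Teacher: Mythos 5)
Your argument for tensor indecomposability is valid but takes a genuinely different route from the paper's. You show that the automorphism $\sigma$ of $X^\circ$ induced by $u$ (inner composed with a graph automorphism) fixes each Steinberg layer of $V$ up to isomorphism, so each tensor factor $V_i$ satisfies $V_i^\sigma\cong V_i$; by Schur's lemma $u$ then differs by a scalar from an element of $\GL(V_1)\otimes\GL(V_2)$, and Lemma~\ref{lem:tens 1} applies uniformly. (Two small points: the unordered pair $\{V_1,V_2\}$ is \emph{not} uniquely determined when there are more than two twist layers, but this is immaterial since every factor of every such partition is $\sigma$-stable; and for $G=\SO(V)$ with $p=2$ the regular Jordan type is $(n-2,2)$, not $(n-1,1)$, which Lemma~\ref{lem:tens 1} still covers.) The paper instead only observes that $u$ preserves the decomposition when it acts by an inner automorphism, reduces the outer case to $X=A_l.2$, $D_l.2$, $E_6.2$ ($p=2$) or $D_4.3$ ($p=3$), and kills those by comparing the order of a regular unipotent element (at least $\dim V-2\ge d^2-2$, $d$ the natural dimension of $X^\circ$) with the maximal order of a unipotent element of $X$. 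Your route buys uniformity at the cost of invoking the uniqueness of the Steinberg decomposition and the fact that tensor decompositions of irreducibles for simple groups refine it; the paper's route is more elementary but case-by-case.

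The second half has a genuine confusion. The exceptional case $X=B_3<G=D_4$ is a \emph{connected} configuration: $B_3$ admits no graph automorphism, $u$ lies in $X^\circ=B_3$, and the point of singling it out is that there $u$ has two Jordan blocks ($7,1$ or $6,2$) rather than one. It does not arise from your ``final disconnected inspection.'' Conversely, the configurations that do survive when $u$ induces a nontrivial graph automorphism of $X^\circ$ are $A_2.2$ on the $8$-dimensional module $L(\varpi_1+\varpi_2)$ and $D_l.2$ on the natural module (both with $p=2$), and these sit inside Table~\ref{tab:SS97}, not outside it. If, as the paper does, you cite \cite[Prop.~2.2]{SS97} for the full classification including disconnected $X$, your extra inspection is redundant and its errors harmless; but if you intend to derive the disconnected cases yourself from the connected classification, the analysis as sketched would reach the wrong answer and would need to be redone, in particular recovering the $A_2.2$ and $D_l.2$ rows and placing $B_3<D_4$ where it belongs.
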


\begin{table}[htb]
\caption{Simple modules with regular unipotent elements}   \label{tab:SS97}
$$\begin{array}{c|ccccc}
 X& \la& \dim V& \text{cond.}& |u|\cr
\hline
 A_1& m\varpi_1& m+1& m<p& p\cr
 A_l& \varpi_1& l+1& l>1& <p(l+1)\cr
 B_l& \varpi_1& 2l+1& p>2& <p(2l+1)\cr
 C_l& \varpi_1& 2l& & <2pl\cr
 G_2& \varpi_1& 7& p>2& \le p^2\cr
 G_2& \varpi_1& 6& p=2& 8\cr
 A_2.2& \varpi_1+\varpi_2& 8& p=2& 8\cr
 D_l.2& \varpi_1& 2l& p=2,l\ge3& <4l\cr
\end{array}$$
The last column records (an upper bound for) \\
the order of a regular unipotent element $u\in X$.
\end{table}

\begin{proof}
Assume that $V=V_1\otimes V_2$ for non-trivial irreducible $X^\circ$-modules
$V_i$. If $X=X^\circ$, then Lemma~\ref{lem:tens 1} gives that $\dim V=4$ and
$G=D_2$, but this is not simple. If some power $u^j$ acts as an inner element
$y$ on $X^\circ$, then $u^jy^{-1}=z\in X$ centralises $X^\circ$, hence, as
$X^\circ$ is irreducible, we must have that $u^j$ equals the unipotent part of
$y$ and so lies in $X^\circ$.
So now we may assume that $u\notin X^\circ$ and hence $X=A_l.2$ ($l\geq 2$),
$D_l.2$ ($l\geq 4$) or $E_6.2$, with $p=2$, or $X=D_4.3$ and $p=3$. 
Unipotent elements in $X=E_6.2$ have order at most~32, but there is no faithful
irreducible representation of $X$ of dimension at most 34. Thus $X^\circ$ is
of classical type. Let $d$ denote the dimension of its natural module. Then,
e.g., by \cite[Tab.~2]{Lue} we have $\dim V_i\ge d$, so $\dim V\ge d^2$, but
unipotent elements of $X$ have order less than $p^2d$. Thus $d< p^2$. Note
that $D_4$ with $p=3$ cannot occur, as here unipotent elements have order at
most~$27<d^2-2=62$. This only leaves the possibility $X^\circ=A_2$, $p=2$, and
$\dim V=9$.  But there is no 9-dimensional irreducible orthogonal module in
characteristic~2, so in fact $V$ must be tensor indecomposable for $X^\circ$.
The remaining assertions are now shown in \cite[Prop.~2.2]{SS97}.
\end{proof}

We will obtain a similar classification for unipotent elements in $\SL(V)$ with
a Jordan block of size~$\dim V-1$ when $p=2$ in Proposition~\ref{prop:l-1}.

\subsection{Jordan forms and orders of regular unipotent elements}

While the notion of regular unipotent element is well-known for connected
reductive groups, this is much less so for non-connected reductive groups.
Still, similar results hold.

Let $G$ be a not necessarily connected reductive algebraic group and let
$x\in G$ be unipotent.
Spaltenstein \cite[p.~41 and II.10.1]{Sp82} has shown (generalising a result
of Steinberg in the connected case) that the coset $xG^\circ$ of the connected
component $G^\circ$ contains a unipotent $G^\circ$-conjugacy class $C$ that is
dense in the variety of unipotent elements of $xG^\circ$, called the class of
\emph{regular unipotent elements of $xG^\circ$}. Since this variety is
irreducible \cite[Cor.~I.1.6]{Sp82}, $C$ is also the unique class of unipotent
elements in $xG^\circ$ of maximal dimension.

We now describe the Jordan block structure of regular unipotent elements in
classical type almost simple groups on their natural representation.
For us, the \emph{natural representation} for the extension $A_{l-1}.2$ of
$A_{l-1}$, $l\ge3$, by its graph automorphism of order~2 is defined by its
embedding into the stabiliser in $\GO_{2l}$ of a pair of complementary totally
singular subspaces, with $A_{l-1}$ acting in its natural representation,
respectively its dual, on these subspaces. We do not consider
$D_4.3$ or $D_4.\fS_3$ as being of classical type.

\begin{lem}   \label{lem:Jordan}
 Let $G=G^\circ\langle u\rangle$ be almost simple of classical type with $u$
 regular unipotent in $uG^\circ$. Then in the natural representation of $G$:
 \begin{enumerate}[\rm(a)]
  \item $u$ has a single Jordan block for $G=A_l$, $B_l$
   (when $p\ne2$), $C_l$, and for $G=D_l.2$ when $p=2$;
  \item $u$ has two Jordan blocks of sizes $2l,1$ when $p=2$ for $G=B_l$;
  \item $u$ has two Jordan blocks of sizes $2l-1,1$ when $p\ne2$, respectively
   of sizes $2l-2,2$ when $p=2$ for $G=D_l$;
  \item $u$ has a single Jordan block of size $2l$ when $p=2$ for $G=A_{l-1}.2$
   with $l$ odd; and
  \item $u$ has two Jordan blocks of sizes $2l-2,2$ when $p=2$ for
   $G=A_{l-1}.2$ with $l$ even.
 \end{enumerate}
\end{lem}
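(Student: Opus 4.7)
The argument splits into the connected cases (those in (a) other than $G=D_l.2$, together with (b) and (c)) and the disconnected cases $G=D_l.2$ (in (a)) and $G=A_{l-1}.2$ (in (d), (e)). For the connected cases, the Jordan form of a regular unipotent element on the natural module is classical and follows from the explicit description as a product of all simple-root elements: for $A_l$, $C_l$, and for $B_l$ with $p\neq2$, one obtains a single Jordan block. For $B_l$ with $p=2$, the polar form of the defining quadratic form on $V$ has a one-dimensional radical $\rad$ fixed by $B_l$, and $V/\rad$ is the natural $C_l$-module under the isogeny $B_l\to C_l$; since $u$ acts trivially on $\rad$ and as a single block of size $2l$ on $V/\rad$, a count of the rank of $u-I$ on $V$ yields partition $(2l,1)$. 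For $G=D_l$, the partitions $(2l-1,1)$ (when $p\neq2$) and $(2l-2,2)$ (when $p=2$) are classical.

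For $G=D_l.2$ in characteristic $2$, note that $D_l.2=\GO_{2l}$ sits inside $\Sp(V)=\Sp_{2l}$ via the associated alternating form. A regular unipotent of $\Sp_{2l}$ is a single Jordan block of size $2l$; one realises such an element in the non-identity coset of $D_l$ by taking an explicit upper-triangular single-block unipotent that preserves a non-degenerate quadratic form whose Dickson invariant on this element is non-trivial. Its centraliser in $\Sp_{2l}$ has dimension $l$, which forces $\dim C_{D_l}(u)\le l-1$, minimal in the coset, so $u$ is regular unipotent in $uD_l$.

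For $G=A_{l-1}.2$ in characteristic $2$, write $V=V_1\oplus V_2$ with $V_1,V_2$ complementary totally singular of dimension $l$, and let $\sigma=\bigl(\begin{smallmatrix}0&I\\I&0\end{smallmatrix}\bigr)$ be the swap coset representative. A direct calculation gives $\sigma-I$ of rank $l$, so the Dickson invariant of $\sigma$ on $V$ equals $l\bmod 2$. Hence the non-identity coset of $A_{l-1}$ in $A_{l-1}.2$ lies in $\GO_{2l}\setminus D_l$ when $l$ is odd, and in $D_l$ when $l$ is even. Every element of this outer coset has the form $u=\sigma\cdot\operatorname{diag}(h,h^{-T})$ for some $h\in\SL_l$, with
$$u^2=\operatorname{diag}(h^{-T}h,\,hh^{-T})=\operatorname{diag}(M,M^{-T}),\quad\text{where }M:=h^{-T}h.$$
By Lemma~\ref{lem:power}, $u$ is a single Jordan block of size $2l$ iff $u^2$ has type $(l,l)$ iff $M$ is a single Jordan block of size $l$; similarly $u$ is of type $(2l-2,2)$ iff $u^2$ is of type $(l-1,l-1,1,1)$ iff $M$ is of type $(l-1,1)$.

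The key linear-algebra fact, which I expect to be the main obstacle, is that in characteristic $2$ there exists $h\in\GL_l$ with $h^{-T}h$ a regular unipotent Jordan block of size $l$ if and only if $l$ is odd, while $M$ of type $(l-1,1)$ is always achievable; this I would verify by a direct computation, writing out $h=h^TM$ as a linear system in the entries of $h$ and checking when it admits an invertible solution (equivalently, via the classification of unipotent-invariant non-degenerate bilinear forms over $k$ of characteristic $2$). Granting this, case~(d) follows: for $l$ odd, such $h$ exists and the resulting $u$ is a single Jordan block of size $2l$; since $u$ lies in $\GO_{2l}\setminus D_l$ this coincides with the regular unipotent class of $D_l.2\setminus D_l$ (by (a)), hence is regular in the coset of $A_{l-1}$. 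For case~(e), $l$ even, $A_{l-1}.2\subseteq D_l$, so the Jordan type of $u$ is bounded above by that of the regular unipotent of $D_l$, which by (c) is $(2l-2,2)$; choosing $h$ so that $M$ has type $(l-1,1)$ attains this bound, giving the regular class in the coset.
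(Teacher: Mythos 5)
Your treatment of (a)--(c) and of $D_l.2$ is essentially a sketch of facts the paper simply imports from \cite[Lemma~1.2]{SS97}, and that is fine. For the genuinely new parts (d) and (e) you take a different route from the paper: the paper invokes Spaltenstein's classification \cite[I.2.7, I.2.8(c)]{Sp82} of unipotent classes in $A_{l-1}.2\setminus A_{l-1}$, parametrised by the Jordan type of $u^2$ on the $l$-dimensional module together with a centraliser-dimension formula, reads off that the minimal centraliser occurs for $u^2$ of type $(l)$ ($l$ odd) resp.\ $(l-1,1)$ ($l$ even), and then applies Lemma~\ref{lem:power} exactly as you do. Your replacement of Spaltenstein's input by an explicit parametrisation $u=\sigma\cdot\operatorname{diag}(h,h^{-T})$, $u^2=\operatorname{diag}(h^{-T}h,hh^{-T})$, together with the Dickson-invariant computation locating the outer coset of $A_{l-1}$ inside or outside $D_l$, is correct and attractive in that it is self-contained; the computation of $u^2$ and the equivalences via Lemma~\ref{lem:power} check out.

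However, there is a genuine gap exactly where you predict one: the assertion that $h^{-T}h$ can be a single unipotent Jordan block of size $l$ over $k$ of characteristic~$2$ if and only if $l$ is odd is the entire content separating (d) from (e), and you do not prove it --- ``I would verify by a direct computation'' is a plan, not an argument. The fact is true (it is the characteristic-$2$ classification of nondegenerate bilinear forms with unipotent asymmetry, due to Riehm, and is precisely what Spaltenstein's parametrisation encodes), but as written your proof of the lemma rests on an unestablished claim. A second, smaller gap: having exhibited one element of $\sigma A_{l-1}$ with the extremal Jordan type, you still must show the \emph{regular} class of that coset has this type. The inference ``$u$ is regular in $D_l.2\setminus D_l$, hence regular in the coset of $A_{l-1}$'' is backwards --- regularity in a larger group does not pass to a coset of a subgroup. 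The correct argument is that the Jordan-type conditions are open (rank conditions on powers of $u-1$), the unipotent variety of the coset is irreducible with the regular class dense \cite[Cor.~I.1.6, II.10.1]{Sp82}, so the open nonempty locus of extremal type meets the regular class; you use this implicitly but should say it.
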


\begin{proof}
Only~(d) and~(e) are not shown in \cite[Lemma~1.2]{SS97}. Spaltenstein
\cite[I.2.7, I.2.8(c)]{Sp82} gives a description of the unipotent classes in 
$A_{l-1}.2\setminus A_{l-1}$ in terms of the Jordan normal form of the square
of the elements on the natural $A_{l-1}$-module, and a formula for the
centraliser dimension. From this it can be seen that elements $u$ with minimal
centraliser dimension are those for which $u^2$ has one Jordan block of size
$l$ if $l$ is odd, and two blocks of sizes $l-1,1$ if $l$ is even. Thus, in the
natural $2l$-dimensional orthogonal representation of $A_{l-1}.2$, the element $u^2$ has two
Jordan blocks of size $l$, respectively four of sizes $l-1,l-1,1,1$. Given the
possible Jordan block shapes of unipotent elements in $D_l.2$ in its natural  
representation, the claim for $u$ follows with Lemma~\ref{lem:power}.
\end{proof}

Since the natural representations are faithful, the above result also allows one
to read off the orders of regular unipotent elements of almost simple classical
groups.

\subsection{Some results on extensions}
We conclude this preparatory section by collecting some basic properties on
Ext-groups. We state the following well-known result for future reference
(see \cite[Prop.~3.3.4]{Wei}).

\begin{lem}   \label{lem:(*)}
 Let $H$ be a group, and $V_i,U_j$ with $1\le i\le n$, $1\le j\le m$, be
 $kH$-modules. Then
 $$\Ext^1_{H}(\bigoplus_i V_i,\bigoplus_j U_j)
   \cong\bigoplus_{i,j}\Ext^1_{H}(V_i,U_j).$$
\end{lem}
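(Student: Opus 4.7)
The plan is to reduce the assertion to the additivity of $\Ext^1$ in each variable separately, which is a routine consequence of the fact that a split short exact sequence of $kH$-modules induces a split long exact sequence of $\Ext$-groups. Concretely, fix an arbitrary $kH$-module $W$ and consider the split short exact sequence
$$0\lra V_1\lra V_1\oplus V_2\lra V_2\lra 0.$$
Its associated long exact sequence for $\Ext^*_H(-,W)$ splits degree by degree, so in particular
$$\Ext^1_H(V_1\oplus V_2,W)\cong\Ext^1_H(V_1,W)\oplus\Ext^1_H(V_2,W).$$
An immediate induction on $n$ then yields additivity in the first variable for arbitrary finite direct sums; a symmetric argument using a split short exact sequence in the second slot and induction on $m$ yields additivity in the second variable. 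Combining the two statements gives the claimed isomorphism.

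Alternatively, one could argue directly with resolutions: choose projective resolutions $P_i^\bullet\twoheadrightarrow V_i$, so that $\bigoplus_i P_i^\bullet$ is a projective resolution of $\bigoplus_i V_i$, and then apply $\Hom_H(-,\bigoplus_j U_j)$. Since $\Hom_H$ carries finite direct sums in either slot to finite direct products, which for abelian groups coincide with direct sums, the resulting complex is $\bigoplus_{i,j}\Hom_H(P_i^\bullet,U_j)$, and taking cohomology in degree one gives the right-hand side.

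There is really no obstacle here, as the result is entirely formal and standard (and is cited to \cite[Prop.~3.3.4]{Wei} in the statement). The only point requiring a word of care is the use of finiteness of the index sets, which is what allows the passage between direct products and direct sums inside the first variable of $\Hom_H$, and guarantees that cohomology commutes with the relevant coproducts.
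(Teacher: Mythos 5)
Your proof is correct: both the split-short-exact-sequence argument (additivity of $\Ext^1$ in each variable, followed by induction on $n$ and $m$) and the alternative resolution argument are standard and complete, and your closing remark about finiteness of the index sets is precisely the one point that deserves care. The paper itself gives no proof of this lemma, only the citation to \cite[Prop.~3.3.4]{Wei}, so there is nothing to compare against beyond observing that your second argument is essentially the one underlying that reference.
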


We thank Jacques Th\'evenaz for pointing out the following result:

\begin{lem}   \label{lem:ext fact}
 Let $k$ be a field, $N\unlhd H$ be groups and $U,V$ two finite-dimensional
 $kH$-modules on which $N$ acts trivially. Assume that $\Ext_N^1(k,k)=0$.
 Then $\Ext^1_{H/N}(U,V)\cong\Ext^1_H(U,V)$.
\end{lem}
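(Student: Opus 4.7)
The plan is to reduce the assertion to ordinary group cohomology via the internal Hom, and then invoke the five-term inflation--restriction sequence arising from the Lyndon--Hochschild--Serre spectral sequence for the normal subgroup $N\unlhd H$.

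First I would set $W:=\Hom_k(U,V)$, endowed with the usual $kH$-module structure via $(h\cdot f)(u)=hf(h^{-1}u)$. Because $N$ acts trivially on both $U$ and $V$ by hypothesis, it acts trivially on $W$ as well, and in particular $W^N=W$. Standard homological algebra then identifies
$$\Ext^i_H(U,V)\cong H^i(H,W)\quad\text{and}\quad\Ext^i_{H/N}(U,V)\cong H^i(H/N,W)$$
for all $i\ge0$, so it suffices to prove that the inflation map $H^1(H/N,W)\to H^1(H,W)$ is an isomorphism.

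Next I would write down the five-term exact sequence of Hochschild--Serre associated to the extension $1\to N\to H\to H/N\to1$ with coefficients in $W$:
$$0\lra H^1(H/N,W^N)\lra H^1(H,W)\lra H^1(N,W)^{H/N}\lra H^2(H/N,W^N).$$
Using $W^N=W$, the inflation term is $H^1(H/N,W)$, and the only thing left to verify is that the restriction term $H^1(N,W)^{H/N}$ vanishes, for then inflation is forced to be an isomorphism.

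To see this, note that since $N$ acts trivially on $W$, as a $kN$-module $W$ is simply a direct sum of $d:=\dim_k W$ copies of the trivial module $k$. Hence by Lemma~\ref{lem:(*)},
$$H^1(N,W)=\Ext^1_N(k,W)\cong\Ext^1_N(k,k)^{d}=0$$
by the standing hypothesis $\Ext^1_N(k,k)=0$. Consequently the restriction term in the five-term sequence is zero, inflation yields $H^1(H/N,W)\cong H^1(H,W)$, and translating back via the identifications above gives the desired isomorphism $\Ext^1_{H/N}(U,V)\cong\Ext^1_H(U,V)$. There is no genuine obstacle here; the argument is purely formal once one recognises that the hypothesis $\Ext^1_N(k,k)=0$ is exactly what is needed to kill the restriction term for any trivial $kN$-module of finite dimension.
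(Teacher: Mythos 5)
Your proof is correct and is essentially the paper's own argument: the paper also applies the five-term inflation--restriction sequence to the coefficient module $U^*\otimes V$ (your $\Hom_k(U,V)$), identifies the first term with $\Ext^1_{H/N}(U,V)$ using triviality of the $N$-action, and kills the restriction term via $\Ext^1_N(k,k)=0$ together with additivity of $\Ext^1$ over direct sums. The only difference is cosmetic notation for the internal Hom.
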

 
\begin{proof}
We use the (exact) inflation-restriction sequence for cohomology (see
\cite[6.8.3]{Wei}) for a $kH$-module $M$:
$$0\rightarrow H^1(H/N,M^N)\buildrel\text{inf}\over\longrightarrow H^1(H,M)
  \buildrel\text{res}\over\longrightarrow H^1(N,M)^{H/N}\rightarrow\ldots$$
which by \cite[6.1.2]{Wei} can be interpreted as the $\Ext$-sequence
$$0\rightarrow\Ext_{H/N}^1(k,M^N)\rightarrow\Ext_H^1(k,M)\rightarrow
  \Ext_N^1(k,M)^{H/N}\rightarrow\ldots.$$
Applying this with $M=U^*\otimes V$ and using
$\Ext^1(k,U^*\otimes V)\cong\Ext^1(U,V)$ (see \cite[Cor.~1]{AR67}) the previous
sequence becomes
$$0\rightarrow\Ext_{H/N}^1(k,(U^*\otimes V)^N)\rightarrow\Ext_H^1(U,V)
  \rightarrow\Ext_N^1(U,V)^{H/N}\rightarrow\ldots.$$
As $N$ acts trivially on $U$ and $V$, the first term equals
$\Ext_{H/N}^1(k,U^*\otimes V)\cong\Ext_{H/N}^1(U,V)$, while the third is
$\Ext_N^1(k^{\dim U},k^{\dim V})^{H/N}=0$ by our hypothesis on $\Ext_N^1(k,k)$
and Lemma~\ref{lem:(*)}, whence exactness of the sequence implies our claim.
\end{proof}

\begin{lem}   \label{lem:ext}
 \begin{enumerate}
  \item[\rm(a)] Let $H$ be a semisimple algebraic group. Then there are no
   non-trivial self-extensions between irreducible $H$-modules.
  \item[\rm(b)] Let $H_1\circ H_2$ be a semisimple group acting on
   $V_1\oplus V_2$, with $H_i$ acting trivially on $V_{3-i}$. Then
   $$H^1(H_1H_2,V_1\oplus V_2) \cong H^1(H_1,V_1)\oplus H^1(H_2,V_2).$$
 \end{enumerate}
\end{lem}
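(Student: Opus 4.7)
The plan splits along the two parts, which are essentially independent.

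For part (a), I would argue via highest weight theory. Fix a maximal torus $T$ and a Borel subgroup $B = TU$ of $H$, let $L = L(\lambda)$ be an irreducible $H$-module of highest weight $\lambda$, and consider an arbitrary extension $0 \to L \to E \to L \to 0$. Since every weight of $E$ is bounded above by $\lambda$, the $\lambda$-weight space $E_\lambda$ is automatically $U$-fixed, is two-dimensional, and surjects onto $L_\lambda$ under the quotient map; pick a $B$-eigenvector $w \in E_\lambda$ mapping onto a generator of the highest weight space of the quotient $L$. The universal property of the Weyl module $V(\lambda)$ then produces a morphism $\phi: V(\lambda) \to E$ with $\phi(v_\lambda) = w$. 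The image $\phi(V(\lambda))$ is a submodule of $E$, so its composition factors are among $\{L(\lambda), L(\lambda)\}$; on the other hand, every composition factor of $V(\lambda)$ distinct from $L(\lambda)$ is of the form $L(\mu)$ with $\mu < \lambda$. Hence the kernel of $\phi$ must contain the radical of $V(\lambda)$, forcing $\phi$ to factor through the simple quotient $L(\lambda)$. Composition with the projection $E \to L$ is then non-zero and therefore an isomorphism, producing the desired splitting.

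For part (b), I would combine Lemmas~\ref{lem:(*)} and~\ref{lem:ext fact} in three steps. Lemma~\ref{lem:(*)} first reduces the claim to showing $H^1(H_1 H_2, V_i) \cong H^1(H_i, V_i)$ for $i=1,2$. Next, the normal subgroup $H_{3-i} \unlhd H_1 H_2$ acts trivially on $V_i$ and, being semisimple, satisfies $\Ext^1_{H_{3-i}}(k,k) = \Hom(H_{3-i}, \mathbb{G}_a) = 0$ (semisimple groups admit no non-trivial morphisms to the unipotent group $\mathbb{G}_a$). Lemma~\ref{lem:ext fact} (taking $U = k$ and $V = V_i$) therefore gives $H^1(H_1 H_2, V_i) \cong H^1(H_1 H_2 / H_{3-i}, V_i)$. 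Finally, $H_1 H_2 / H_{3-i}$ is naturally isomorphic to $H_i / Z$ where $Z := H_1 \cap H_2$ is a central diagonalizable subgroup of $H_i$; a second application of Lemma~\ref{lem:ext fact}, this time with $N = Z \unlhd H_i$ (noting that $Z$ acts trivially on $V_i$ and $\Ext^1_Z(k,k) = \Hom(Z, \mathbb{G}_a) = 0$ since $Z$ is diagonalizable), yields $H^1(H_i/Z, V_i) \cong H^1(H_i, V_i)$. Assembling the isomorphisms completes the proof.

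The main obstacle is really only the bookkeeping in part (b): one must verify normality of $H_{3-i}$ in the central product (immediate, since $H_1$ and $H_2$ commute), identify the quotient $H_1 H_2 / H_{3-i}$ with $H_i/Z$, and check the two vanishing hypotheses $\Ext^1(k,k) = 0$ required at each application of Lemma~\ref{lem:ext fact}. Part (a) is classical and could equivalently be invoked from Jantzen's book.
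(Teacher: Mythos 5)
Your proof is correct and follows essentially the same route as the paper: part (a) is the standard Weyl-module argument that the paper simply cites from Jantzen, and part (b) combines Lemma~\ref{lem:(*)} with two applications of Lemma~\ref{lem:ext fact} exactly as the paper does. The only cosmetic difference in (b) is the order of the reductions: the paper first replaces $H_1\circ H_2$ by the direct product $H_1\times H_2$ (via the finite central subgroup $Z$) and then passes to the factors, whereas you split the direct sum first and then quotient the central product by $H_{3-i}$, identifying the result with $H_i/Z$ --- both routes rest on the same vanishing $\Ext^1_N(k,k)=0$ for the relevant normal subgroups.
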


\begin{proof}
Part~(a) is \cite[II.2.12(1)]{Ja03}.
In (b) we may write $H_1\circ H_2=(H_1\times H_2)/Z$ for a finite central
subgroup $Z\le H_1\times H_2$ acting trivially on $V_1\oplus V_2$, with
$\Ext_Z^1(k,k)=0$. Then by Lemmas~\ref{lem:ext fact} and~\ref{lem:(*)} we have
$$H^1(H_1 H_2, V_1\oplus V_2)\cong H^1(H_1\times H_2, V_1\oplus V_2)
  \cong H^1(H_1\times H_2,V_1)\oplus H^1(H_1\times H_2,V_2).$$
Since $H_i$ acts trivially on $V_{3-i}$, applying Lemma~\ref{lem:ext fact}
again we obtain $H^1(H_1\times H_2,V_i)\cong H^1(H_i,V_i)$ for $i=1,2$, as
claimed.
\end{proof}

\section{The case of $\SL(V)$}   \label{sec:SL(V)}

In this section we prove Theorem~\ref{thm:main} for those classical type simple
algebraic groups for which regular unipotent elements have a single Jordan
block on their natural module (see Lemma~\ref{lem:Jordan}). We are in the
following situation:
$X\le\SL(V)$ is a (not necessarily connected) reductive subgroup of the form
$X=X^\circ\langle u\rangle$ for a regular unipotent element $u$ of $\SL(V)$. We
also assume that $X^\circ$ is not a torus; this case will be considered
in Section~\ref{subsec:tori SL}. We will show that $X$ cannot be contained
in  a proper parabolic subgroup of $\SL(V)$, that is, $X$ acts irreducibly on
$V$. For this, we may whenever convenient, assume that $X^\circ$ is semisimple,
since if $[X^\circ,X^\circ]\langle u\rangle$ is not contained in a proper
parabolic subgroup of $\SL(V)$ then neither is $X^\circ\langle u\rangle$.
As $u$ has a single Jordan block on $V$, if $V_1<V_2\le V$ are $u$-invariant
subspaces, then $u$ has a single Jordan block on $V_2/V_1$.

\subsection{The completely reducible case}
We first deal with the case when $V$ is completely reducible for $X^\circ$.

\begin{lem}   \label{lem:cr=irr}
 Let $X\le \SL(V)$ be a reductive subgroup of the form
 $X=X^\circ\langle u\rangle$ for a regular unipotent element $u$ of $\SL(V)$,
 such that $X^\circ$ is not a torus. Assume that $V|_{X^\circ}$ is completely
 reducible. Then $V$ is an irreducible $X$-module.
\end{lem}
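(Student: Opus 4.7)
The plan is to show that any $X$-submodule of $V$ is either $0$ or $V$. As noted at the start of Section~\ref{sec:SL(V)}, I may assume $X^\circ$ is semisimple. Decompose $V|_{X^\circ}=W_1\oplus\cdots\oplus W_r$ into $X^\circ$-isotypic components, so $W_i\cong U_i^{m_i}$ with $U_1,\ldots,U_r$ pairwise non-isomorphic irreducible $X^\circ$-modules. Since $u$ normalises $X^\circ$, it permutes the $W_i$; since $V$ is indecomposable as a $k\langle u\rangle$-module (single Jordan block), the $u$-action on $\{W_1,\ldots,W_r\}$ cannot have more than one orbit and is therefore transitive. In particular $r$ is a $p$-power (as $u$ is unipotent) and all $W_i$ have a common dimension $d$, with $\dim V=rd$.

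Next I work out the Jordan form of $u^r$. Iterated application of Lemma~\ref{lem:power} (each successive $p$-th power is taken on a single block whose size is divisible by~$p$) shows that $u^r=u^{p^s}$ has exactly $r$ Jordan blocks on $V$, all of size $d$. As $u^r$ stabilises each $W_i$ and the Jordan blocks of $u^r|_V$ partition into those on the~$W_i$, each $W_i$ must carry exactly one Jordan block of size~$d$; equivalently, $u^r|_{W_i}$ is regular unipotent in $\SL(W_i)$.

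I then apply Lemma~\ref{lem:tensor} to $H=X^\circ$ (connected reductive with non-trivial derived subgroup, as $X^\circ$ is semisimple and non-trivial) acting on $W_i$ and normalised by $u^r$: the module $W_i|_{X^\circ}$ is completely reducible and homogeneous by construction, and $u^r$ has a single Jordan block on~$W_i$. The conclusion is either that $W_i$ is $X^\circ$-irreducible, or $\dim W_i=4$ and $u^r$ has two Jordan blocks on $W_i$; the latter contradicts the single-block structure just established. Hence $m_i=1$, and $V|_{X^\circ}=U_1\oplus\cdots\oplus U_r$ with the $U_i$ pairwise non-isomorphic irreducible $X^\circ$-modules cyclically permuted by $u$. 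To finish, any $X$-submodule $M\le V$ is $X^\circ$-invariant, and by Schur's lemma a submodule of a direct sum of pairwise non-isomorphic irreducibles has the form $\bigoplus_{i\in I}U_i$ for some $I\subseteq\{1,\ldots,r\}$; invariance under $u$ forces $I$ to be $u$-invariant, and transitivity yields $I=\emptyset$ or $I=\{1,\ldots,r\}$, so $M=0$ or $M=V$.

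The main technical point is the Jordan-block bookkeeping that reduces $u^r$ on each isotypic component to a single block of size $d$; once this is in hand, Lemma~\ref{lem:tensor} does the heavy lifting by excluding non-irreducible isotypic components, and a Clifford-type argument completes the proof.
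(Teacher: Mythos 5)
Your proof is correct and follows essentially the same route as the paper: decompose $V|_{X^\circ}$ into homogeneous components, use the single Jordan block of $u$ to get transitivity, deduce from Lemma~\ref{lem:power} that $u^r$ acts with a single Jordan block on each component, and then apply Lemma~\ref{lem:tensor} to force each component to be irreducible. The only differences are presentational — you spell out the block-size bookkeeping, the exclusion of the $\dim W_i=4$ alternative, and the final Schur/Clifford step, all of which the paper leaves implicit.
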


\begin{proof}
Decompose $V|_{X^\circ}=V_1\oplus\cdots\oplus V_m$ into its homogeneous
components $V_i$. Then $\langle u\rangle$ acts on the set of components,
transitively since otherwise we obtain a $u$-invariant decomposition of $V$,
contradicting regularity of $u$. Thus $v:=u^m$ stabilises each component $V_i$,
acting as a single Jordan block on it by Lemma~\ref{lem:power}, and $V_i$ is
homogeneous as an $X^\circ$-module. Since $X^\circ$ is not a torus,
Lemma~\ref{lem:tensor} implies that $V_i$ is an irreducible $X^\circ$-module.
As the $V_i$ are permuted transitively by $u$, the module $V$ is irreducible for~$X$.
\end{proof}

In view of Lemma~\ref{lem:cr=irr}, it seems interesting to determine the
structure of irreducible subgroups containing regular unipotent elements.

\begin{prop}   \label{prop:irr new}
 Assume $p>0$. Let $X\le \SL(V)$ be a reductive subgroup of the form
 $X=X^\circ\langle u\rangle$ for a regular unipotent element $u$ of $\SL(V)$,
 such that $X^\circ\ne1$ is semisimple. Assume that $X$ acts irreducibly on $V$.
 Then
 \begin{enumerate}[$\bullet$]
  \item $V|_{X^\circ}=V_1\oplus\cdots\oplus V_m$ for submodules $V_i$,
   transitively permuted by $u$, on which $u^m$ acts with a single Jordan
   block; and
  \item $X^\circ=Y_1\cdots Y_{r}$ for distinct semisimple normal subgroups
   $Y_1,\ldots,Y_r$ transitively permuted by $u$, with $r|m$, where $Y_i$ acts
   trivially on $V_j$ for $j\not\equiv i\pmod r$, and in addition one of the
   following holds:
   \begin{enumerate}[\rm(1)]
    \item the $Y_i$ are isomorphic simple algebraic groups and $V_i$ is an
     irreducible tensor indecomposable $Y_{i\pmod r}$-module, for $1\le i\le m$;
     if $r<m$ then $p=2$, $Y_i=A_l$ for some $l\ge2$, $\dim V_i=l+1$ and $m=2r$;
     or
    \item $p\in\{2,3\}$, $m=r$, $Y_i=A_1^p$ for all $i$, with $u^m$ permuting
     the $p$ factors transitively, and each $V_i$, as a $Y_i$-module, is a
     tensor product of $p$ irreducible $2$-dimensional $A_1$-modules.
   \end{enumerate}
 \end{enumerate}
\end{prop}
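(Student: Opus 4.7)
The plan is to combine Clifford's theorem with the tensor-decomposition analyses of Lemmas~\ref{lem:power}, \ref{lem:tensor} and~\ref{lem:tens 2}, then perform a case analysis on the central product structure of $X^\circ$.

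For the first bullet, Clifford's theorem applied to $X^\circ\unlhd X$ and the irreducible $X$-module $V$ decomposes $V|_{X^\circ}=V_1\oplus\cdots\oplus V_m$ into homogeneous components; $\langle u\rangle$ permutes these, and any proper union of orbits would give an $X$-invariant subspace, so the permutation is transitive. In particular $\dim V_i=d:=n/m$ for all $i$, and $m$ is a power of $p$ since $u$ is unipotent of $p$-power order. Iterating Lemma~\ref{lem:power} on $u$'s single Jordan block of size $md$ on $V$ shows that $u^m$ has Jordan type $(d,d,\ldots,d)$ with $m$ blocks; since the $V_i$ are $\langle u\rangle$-conjugate, $u^m$ has a common Jordan type on each $V_i$, and the only partition of $d$ whose $m$-fold sum equals $(d^m)$ is $(d)$ itself. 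Thus $u^m$ has a single Jordan block of size $d$ on each $V_i$.

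For the second bullet I first show that each $V_i$ is irreducible for $X^\circ$. Since $[X^\circ,X^\circ]=X^\circ\ne1$, Lemma~\ref{lem:tensor} applies to $X^\circ$ acting on the homogeneous module $V_1$ normalised by $u^m$; its exceptional case demands that $u^m$ have two Jordan blocks on $V_1$, which is ruled out by the first bullet. Then Lemma~\ref{lem:tens 2} applied to $X^\circ\langle u^m\rangle\le\SL(V_1)$ yields a dichotomy: cases~(1) and~(3) of that lemma are excluded since they force $u^m$ to have more than one Jordan block, so either (A) the image of $X^\circ$ in $\GL(V_1)$ is a simple algebraic group, or (B) Lemma~\ref{lem:tens 2} case~(2) holds, that is, $p\in\{2,3\}$, $V_1=V_1^{(1)}\otimes\cdots\otimes V_1^{(p)}$ with $\dim V_1^{(j)}=2$, the image of $X^\circ$ is $A_1^p$, $u^m$ permutes the $p$ tensor factors cyclically, and $u^{mp}$ has a single Jordan block on each $V_1^{(j)}$.

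Writing $X^\circ=Z_1\cdots Z_s$ as a central product of simple algebraic groups permuted by $u$, in sub-case~(A) I identify for each $i$ a unique simple normal subgroup $Y_i\le X^\circ$ that maps onto the image of $X^\circ$ in $\GL(V_i)$ with finite central kernel and acts trivially on $V_j$ for $j\not\equiv i\pmod{r}$; the $Y_i$ are $u$-conjugate (hence isomorphic), and faithfulness of $V$ then forces $X^\circ=Y_1\cdots Y_r$, yielding case~(1) of the proposition. In sub-case~(B), taking $Y_i$ to be the preimage of the $A_1^p$-image on $V_i$ gives case~(2); here $m=r$ is forced because $u^m$ must already act non-trivially on $Y_i$ by cyclically permuting its $p$ internal $A_1$-factors.

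The main obstacle will be the bookkeeping in sub-case~(A): exhibiting a simple (as opposed to merely semisimple) complement $Y_i$ to the action-kernel, ruling out ``diagonal'' configurations among the $Z_j$, and establishing that $m\le 2r$ together with the precise exceptional data $p=2$, $Y_i=A_l$ and $\dim V_i=l+1$ when $r<m$. For this bound one would analyse how $u^r$ acts on the sum $V_i\oplus V_{i+r}\oplus\cdots$ together with its action on $Y_i$: the requirement that $u^m=(u^r)^{m/r}$ restrict to a single Jordan block on each summand severely constrains $m/r$, and Proposition~\ref{prop:new SS} combined with Lemma~\ref{lem:power} then pins down the exceptional type.
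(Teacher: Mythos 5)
Your overall architecture matches the paper's: a Clifford decomposition into $m$ summands permuted transitively by $u$, the Jordan-type bookkeeping via Lemma~\ref{lem:power} to get a single block of $u^m$ on each summand, irreducibility of each $V_i$, and the dichotomy from Lemma~\ref{lem:tens 2} leading to the two shapes (1) and (2). Up to that point the argument is sound (the paper obtains irreducibility of the $V_i$ directly from Lemma~\ref{lem:cyc ext} rather than via homogeneous components and Lemma~\ref{lem:tensor}, but your route works).

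The genuine gap is that you never prove the hardest claim of the proposition: the exceptional data in case (1) when $r<m$, namely that then $p=2$, $Y_i=A_l$ with $l\ge2$, $\dim V_i=l+1$ and $m=2r$. You flag this as ``the main obstacle'' and propose to bound $m/r$ by requiring $u^m=(u^r)^{m/r}$ to act with a single Jordan block on each summand; but that requirement is automatically satisfied for every value of $m/r$ (it is exactly the first bullet, already established), so it yields no constraint. The actual constraint is module-theoretic: the summands $V_1,V_{1+r},\ldots$ lying in one $\langle u^r\rangle$-orbit are pairwise non-isomorphic as $Y_1$-modules (the other factors act trivially on them, and they are distinct homogeneous components of $V|_{X^\circ}$), whereas an inner automorphism of $Y_1$ preserves isomorphism classes of $Y_1$-modules. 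Hence $u^r$, which normalises $Y_1$ but moves $V_1$, must induce an \emph{outer} automorphism of the simple group $Y_1$. Since $u$ has $p$-power order this forces $p\in\{2,3\}$ and $u^{rp}$ inner; then $u^{rp}$ fixes the isomorphism class of $V_1$, so $m\mid rp$ and thus $m=rp$. Proposition~\ref{prop:new SS} applied to $Y_1\langle u^m\rangle$ on $V_1$ places $(Y_1,V_1)$ in Table~\ref{tab:SS97}; for $p=3$ the only simple group admitting a triality is $D_4$, which is absent from the table, and for $p=2$ the entry $D_l.2$ is excluded because $V_1$ is not $u^r$-stable, leaving $Y_1=A_l$ on a twist of its natural module. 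A similar, smaller issue affects your one-line justification of $m=r$ in case (2): the correct point is that if $r<m$ then $m/r$ is a power of $p$ divisible by $p$, so $u^m=(u^r)^{m/r}$ would induce the trivial permutation of the $p$ factors of $Y_1=A_1^p$ (the permutation induced by $u^r$ has order dividing $p$), contradicting the transitivity of $u^m$ on those factors.
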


\begin{proof}
As $V$ is irreducible for $X$, by Lemma~\ref{lem:cyc ext},
$V|_{X^\circ}=V_1\oplus\cdots\oplus V_m$ is a direct sum of non-isomorphic
irreducible $X^\circ$-modules permuted transitively by $u$, with
$u(V_i)=V_{i+1}$ for all $i$. Thus $u^m$
stabilises each $V_i$ and acts with a single Jordan block on it by
Lemma~\ref{lem:power}. The semisimple group $X^\circ$ is a product of
simple normal subgroups. Let $Y_1$ denote the product of those simple factors of
$X^\circ$ acting non-trivially on $V_1$. By Lemma~\ref{lem:tens 2} applied to
$Y_1\langle u^m\rangle$ acting on $V_1$, either $Y_1$ is simple, and then it
acts tensor indecomposably on $V_1$ by Proposition~\ref{prop:new SS}; or
$p\in\{2,3\}$, $Y_1=A_1^p$, $u^m$ permutes the $p$ factors transitively, and
$V_1$ is a tensor product of $p$ irreducible 2-dimensional $A_1$-modules.
\par
Now $Y_{i+1}:=Y_1^{u^{-i}}$ acts non-trivially on $u^i(V_1)=V_{i+1}$ for
$i=1,\ldots,m-1$. Thus $X^\circ$ is generated by, and hence equal to the
product of the $Y_i$, which are permuted transitively by $u$. Since each $Y_i$
is an orbit of $u^m$ on the set of simple factors of $X^\circ$, these sets
are mutually disjoint. Let $u^r$ be the smallest
power of $u$ stabilising $Y_1$. Then $X^\circ=Y_1\cdots Y_r$ and $r|m$.   \par
Assume $Y_i$ is simple and $r\ne m$. As $u^r$ normalises $Y_1$ but does not
stabilise
$V_1$, it must act as an outer automorphism of the simple group $Y_1$. So
$p\in\{2,3\}$ and moreover, $u^{rp}$ acts as an inner automorphism on $Y_1$
and stabilises the $Y_1$-module $V_1$. So $u^{rp}\le\langle u^m\rangle$, and as
$r|m$ this implies that $m=rp$. By Proposition~\ref{prop:new SS} the group
$Y_1\langle u^m\rangle$ acting on $V_1$ occurs in Table~\ref{tab:SS97}.
If $p=3$, so $Y_1=D_4$, there is no such entry. So we have $p=2$. The case
$D_l.2$ on the natural module does not occur here, as $V_1$ is not
$u^r$-stable. So $Y_1=A_l$ with $l\ge2$, $Y_1\langle u^r\rangle=A_l.2$ and
$\dim V_1=\dim u^r(V_1)=l+1$ while $u^{2r}=u^m$ stabilises all $V_i$, as in (1).
\par
Finally, assume that $p\in\{2,3\}$ and $Y_1=A_1^p$. Then $Y_1$ is normalised by
$u^r$, and $u^m$ permutes its $p$ factors transitively, where $r|m$. This
forces $r=m$, as in (2).
\end{proof}

\begin{rem}   \label{rem:prop irr}
 In the situation of Proposition~\ref{prop:irr new}, for $1\le i\le r$ set
 $W_i:=V_i$ if $m=r$, and $W_i:=V_i\oplus u^r(V_i)$ if $m=2r$. Then $W_i$ is an
 irreducible
 $Y_i\langle u^r\rangle$-module and $Y_i$ acts trivially on $W_j$ for $j\ne i$.
 So $W_i':=\bigoplus_{j\ne i} W_j$ is the trivial $Y_i$-homogeneous component of
 $V$ and the decomposition $V=W_i\oplus W_i'$ is stabilised by $u^r$. Moreover,
 $u^r$ acts by a single Jordan block on each $W_i$.
\end{rem}

\subsection{The not completely reducible case}

\begin{lem}   \label{lem:soc}
 Let $X\le\SL(V)$ be a reductive subgroup of the form
 $X=X^\circ\langle u\rangle$ for a regular unipotent element $u$ of $\SL(V)$,
 with $[X^\circ,X^\circ]\ne1$.
 Then $\Soc(V|_{[X^\circ,X^\circ]})$ is an irreducible $X$-module. That is,
 the socle series of $V$ as $[X^\circ,X^\circ]$-module is a composition series
 as $X$-module.
\end{lem}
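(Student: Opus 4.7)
The plan is to use the single Jordan block of $u$ on $V$ to force all $X$-submodules of $V$ to form a chain, and then to pin down $\Soc(V|_N)$ (where $N:=[X^\circ,X^\circ]$) as the unique minimal nonzero such submodule. Since $u$-invariant subspaces of $V$ are totally ordered by inclusion and every $X$-submodule is $u$-invariant, $V$ has a unique minimal nonzero $X$-submodule $W$. The subspace $\Soc(W|_N)$ is nonzero and $X$-invariant (being characteristic in $W$ as an $N$-module, with $N$ normal in $X$), so by irreducibility $W=\Soc(W|_N)$ is $N$-semisimple. Hence $W\subseteq\Soc(V|_N)$.

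For the reverse inclusion I would argue by contradiction: suppose $W\subsetneq\Soc(V|_N)$. By the chain property there is a second $X$-submodule $W_2$ with $W\subsetneq W_2\subseteq\Soc(V|_N)$, and $W_2|_N$ is semisimple. Write $W_2=\bigoplus_\alpha W_2^{(\alpha)}$ for its $N$-isotypic decomposition; since $X^\circ$ is connected, it acts trivially on the discrete set of isomorphism classes of irreducible $N$-modules and so preserves each isotypic component, while $u$ permutes them. If $u$ had more than one orbit on these components, the orbit-sums would give incomparable $X$-submodules of $W_2$, violating the total order. Hence there is a single $u$-orbit of some length $s\ge 1$.

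If $s=1$, then $W_2|_N$ is $N$-homogeneous; Lemma~\ref{lem:tensor} applied to $N\le\SL(W_2)$ (with $u$ having a single Jordan block, so the exceptional case does not apply) gives that $W_2$ is $N$-irreducible. But then the $N$-submodule $W\subsetneq W_2$ must be trivial, a contradiction. If $s\ge 2$, then $u^s$ stabilises each $W_2^{(i)}$; iterating Lemma~\ref{lem:power}, $u^s$ has exactly $p^{v_p(s)}$ Jordan blocks on $W_2$, and since it must have at least one per summand, we deduce that $s$ is a power of $p$ and $u^s$ has a single Jordan block on each $W_2^{(i)}$. A second application of Lemma~\ref{lem:tensor}, now to $N\le\SL(W_2^{(i)})$ with $u^s$, shows that each $W_2^{(i)}$ is $N$-irreducible. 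Thus $W_2$ is a direct sum of $s$ pairwise non-isomorphic irreducible $N$-modules transitively permuted by $u$, so its only $X$-submodules are $0$ and $W_2$; this contradicts $0\ne W\subsetneq W_2$.

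Granted the first assertion, the second follows by iteration: in the $N$-socle series $0=S_0\subseteq S_1\subseteq\cdots\subseteq S_r=V$ each $S_i$ is $X$-invariant, and applying the first assertion to the quotient $V/S_i$ (on which $u$ still acts as a single Jordan block, since $S_i$ is $u$-invariant) identifies $S_{i+1}/S_i=\Soc((V/S_i)|_N)$ as $X$-irreducible. The main technical obstacle is the case $s\ge 2$ above, where the combinatorial constraint on the Jordan block count of $u^s$ must be combined with Lemma~\ref{lem:tensor} to force the $N$-isotypic components of $W_2$ to be irreducible.
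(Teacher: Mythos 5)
Your argument is correct and in substance coincides with the paper's: the paper simply notes that $\Soc(V|_{[X^\circ,X^\circ]})$ is a completely reducible module on which $u$ acts with a single Jordan block and cites Lemma~\ref{lem:cr=irr}, whose proof is exactly your analysis via isotypic components, transitivity of the $u$-action on them, Lemma~\ref{lem:power} and Lemma~\ref{lem:tensor}, so your chain-of-$u$-invariant-subspaces framing and the detour through the unique minimal $X$-submodule $W$ amount to inlining that lemma rather than invoking it. The passage to the full socle series by iterating on the quotients $V/S_i$ is likewise exactly how the paper concludes.
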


\begin{proof}
By definition, $\Soc(V|_{[X^\circ,X^\circ]})$ is a completely reducible
$[X^\circ,X^\circ]$-module, on which $u$ acts as a single Jordan block since
that property passes to quotients and submodules. So by Lemma~\ref{lem:cr=irr},
it is an irreducible $X$-module. The claim now follows by induction
on the length of the socle series of $V|_{[X^\circ,X^\circ]}$. 
\end{proof}

\begin{lem}   \label{lem:innercent}
 Let $N\unlhd H$ be groups with $H=N\langle v\rangle$. Assume that $v$ has
 order $p^a$ and acts by an inner automorphism on $N$. If $N$ does not contain
 elements of order $p^a$ then $H=N\circ\langle z\rangle$ for some element
 $z\in H$ of infinite order or of order a multiple of $p^a$.
\end{lem}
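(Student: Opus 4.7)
The plan is to construct $z$ explicitly from the inner-automorphism hypothesis and then control its order via the commuting factorisation $v = nz$.

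First, since conjugation by $v$ induces an inner automorphism of $N$, pick $n \in N$ such that $vxv^{-1} = nxn^{-1}$ for every $x \in N$, and set $z := n^{-1}v$. A direct check shows $zxz^{-1} = n^{-1}(nxn^{-1})n = x$ for all $x \in N$, so $z \in C_H(N)$. Since $H = N\langle v\rangle$ and $v = nz$ with $n \in N$, we have $H = N\langle z\rangle$, and because $z$ centralises $N$ this presentation realises the claimed central product $H = N \circ \langle z\rangle$.

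Next, I would analyse the order of $z$. Because $n \in N$ and $z \in C_H(N)$ commute, $v^m = n^m z^m$ for every integer $m$; in particular $1 = v^{p^a} = n^{p^a}z^{p^a}$, so $z^{p^a} = n^{-p^a}$. If $n$ has infinite order, then $n^{-p^a}$ has infinite order, so $z^{p^a}$ and hence $z$ has infinite order, giving the conclusion.

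Otherwise both $n$ and $z$ have finite order (the latter because $z^{p^a} = n^{-p^a}$ is torsion). From $v^{\mathrm{lcm}(|n|,|z|)} = 1$ we read off that $p^a = |v|$ divides $\mathrm{lcm}(|n|,|z|)$. Since $p^a$ is a prime power and the $p$-part of $\mathrm{lcm}(|n|,|z|)$ equals the maximum of the $p$-parts of $|n|$ and $|z|$, it follows that $p^a \mid |n|$ or $p^a \mid |z|$. In the first case a suitable power of $n$ is an element of $N$ of order exactly $p^a$, contradicting the hypothesis on $N$; hence $p^a \mid |z|$, as desired.

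The argument is wholly elementary, so there is no genuine obstacle; the only point that requires a moment of care is the implication $p^a \mid \mathrm{lcm}(|n|,|z|) \Rightarrow p^a \mid |n| \text{ or } p^a \mid |z|$, which holds precisely because $p^a$ is a prime power.
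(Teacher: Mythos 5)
Your proof is correct and follows essentially the same route as the paper's: write $v$ as the product of an element of $N$ realising the inner automorphism and an element $z$ centralising $N$, then use the commuting factorisation together with the fact that $p^a$ is a prime power to force $p^a$ to divide $|z|$ (or $z$ to have infinite order). The only cosmetic difference is that you spell out the lcm/prime-power step in more detail than the paper does.
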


\begin{proof}
By assumption there is $y\in N$ such that $z:=vy$ centralises $N$. As $z$
centralises $y$, either both $y,z$ are of infinite order, or both have finite
order and then the order $p^a$ of $v=zy^{-1}$ divides the least common multiple
of $|z|,|y|$. In the latter case, $|y|$ is not divisible by $p^a$ by assumption,
which implies that $p^a$ divides $|z|$. Clearly,
$\langle N,z\rangle=\langle N,v\rangle=H$, whence our claim.
\end{proof}

\begin{lem}   \label{lem:unip centr}
 Let $H\le\SL(V)$ be a group, and assume that all composition factors of
 $V|_{H}$ are mutually non-isomorphic. Then $Z(H)$ contains no non-trivial
 unipotent elements.
\end{lem}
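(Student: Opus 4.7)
The plan is to argue by contradiction: assume that $u\in Z(H)$ is a non-trivial unipotent element and derive $u=1$. Since $u$ centralises $H$, the endomorphism $N:=u-1$ of $V$ commutes with every $h\in H$, so $N\in\End_{kH}(V)$; and $N$ is non-zero and nilpotent because $u$ is a non-trivial unipotent. It thus suffices to prove that the hypothesis on composition factors forces $\End_{kH}(V)$ to contain no non-zero nilpotent elements.

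I would prove this by induction on $\dim V$, the case $\dim V=0$ being vacuous. For the inductive step, set $W:=\Soc(V|_H)$. Every $\vhi\in\End_{kH}(V)$ preserves $W$ (socles are preserved by endomorphisms) and induces an endomorphism of $V/W$. By the distinctness hypothesis, $W$ is a direct sum of pairwise non-isomorphic simple $kH$-submodules, so by Schur's lemma (together with $k$ algebraically closed) $\End_{kH}(W)\cong k^{s}$ for some $s$, an algebra with no non-zero nilpotents; thus a nilpotent $\vhi$ satisfies $\vhi|_W=0$. The quotient $V/W$ has pairwise non-isomorphic composition factors (they form a subset of those of $V$), so by the inductive hypothesis the induced nilpotent endomorphism on $V/W$ vanishes; hence $\vhi(V)\subseteq W$.

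Consequently $\vhi$ factors through a $kH$-homomorphism $\bar\vhi\colon V/W\to W$. The composition factors of $V/W$ are those of $V$ not appearing in $W$, hence disjoint from the composition factors of $W$. Any non-zero $kH$-homomorphism $V/W\to W$ would, via its image and a simple submodule thereof, yield a common composition factor of $V/W$ and $W$, which is impossible. Thus $\Hom_{kH}(V/W,W)=0$, forcing $\bar\vhi=0$ and hence $\vhi=0$, completing the induction.

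I anticipate no serious obstacle; the only point needing care is to verify that $W$ and $V/W$ each inherit the distinctness-of-composition-factors hypothesis, so that Schur on $W$, induction on $V/W$, and vanishing of $\Hom_{kH}(V/W,W)$ can all be applied.
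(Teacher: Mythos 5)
Your proof is correct, and it takes a mildly different and slightly more general route than the paper's. The paper argues directly with $x=u-1$: it sets $V_1=\ker x$, notes that $x$ kills every irreducible $kH$-subquotient (since $u\in Z(H)$ acts on each by a unipotent scalar, hence by $1$), and observes that $x$ maps the full preimage of a simple submodule $\bar U\le V/V_1$ onto a submodule of $V_1$ isomorphic to $\bar U$, thereby duplicating a composition factor. You instead prove the stronger statement that $\End_{kH}(V)$ contains no non-zero nilpotent element at all, by induction on $\dim V$ via $W=\Soc(V|_H)$: Schur's lemma kills a nilpotent on the multiplicity-free semisimple module $W$, the inductive hypothesis kills it on $V/W$, and $\Hom_{kH}(V/W,W)=0$ because the hypothesis makes the composition factors of $W$ and $V/W$ disjoint. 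The underlying mechanism is identical in both proofs --- a non-zero nilpotent $H$-endomorphism would transport a composition factor from one layer of $V$ into another, violating multiplicity-freeness --- but your version applies to arbitrary nilpotents of $\End_{kH}(V)$ (which need not preserve every $H$-submodule, only the socle, which is all you use), whereas the paper's exploits that $x=u-1$ with $u\in H$ stabilises every submodule and so gets by without induction. Both arguments are complete; the paper's is shorter, while yours isolates a reusable ring-theoretic fact.
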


\begin{proof}
Let $u\in Z(H)$ be unipotent. Set $x = u-1\in C_{\End(V)}(H)$, so $x$ acts as~0
on any irreducible $kH$-subquotient of $V$. Let $V_1=\ker x$ and set
$\bar V=V/V_1$. If $x\ne0$ then $V_1\ne V$, so $\bar V\ne0$.
Let $\bar U\le \bar V$ be an irreducible $kH$-submodule. Then $x.U\le V_1$
where $U$ is the full preimage of $\bar U$ in $V$. But then
$x.U\cong U/(U\cap\ker x)=U/V_1=\bar U$ is a simple submodule of $V_1$
isomorphic to $\bar U$, contradicting our assumption. Thus $x=0$ and so $u=1$. 
\end{proof}

We now use results of McNinch on semisimplicity of low-dimensional modules in
order to study extensions of irreducible modules for simple algebraic groups
on which a full Jordan block acts.

\begin{prop}   \label{prop:ext full}
 Let $X=X^\circ\langle u\rangle\le\SL(V)$ with $X^\circ$ a simple algebraic
 group and $u$ regular unipotent in $\SL(V)$. If $V$ has at most two
 $X$-composition factors then $X$ acts irreducibly on~$V$.
\end{prop}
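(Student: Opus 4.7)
The plan is to argue by contradiction: I assume $V$ is not $X$-irreducible, so $V$ has exactly two $X$-composition factors, namely an $X$-submodule $V_1$ and quotient $V_2 = V/V_1$, each $X$-irreducible. Since $u$ acts on $V$ as a single Jordan block, the same holds on each of $V_1$ and~$V_2$.

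My first step is to apply Proposition~\ref{prop:irr new} to each of the irreducible $X$-modules $V_1$ and~$V_2$: as $X^\circ$ is simple, only case~(1) of that proposition with $r = 1$ can occur, and thus each $V_i|_{X^\circ}$ is completely reducible, being either $X^\circ$-irreducible or (exclusively when $p = 2$ and $X^\circ = A_l$, $l \ge 2$) a sum of two non-isomorphic $u$-conjugate $X^\circ$-irreducibles of dimension $l+1$ each. If $V|_{X^\circ}$ itself were completely reducible, then Lemma~\ref{lem:cr=irr} (valid since $X^\circ$ is simple, hence not a torus) would force $V$ to be $X$-irreducible, contradicting the assumption; equivalently, by Lemma~\ref{lem:soc}, $\Soc(V|_{X^\circ}) = V_1$. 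So the short exact sequence $0 \to V_1 \to V \to V_2 \to 0$ is non-split as a sequence of $X^\circ$-modules.

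Next, using Lemma~\ref{lem:(*)} to decompose $\Ext^1_{X^\circ}(V_2|_{X^\circ}, V_1|_{X^\circ})$ additively across $X^\circ$-irreducible summands, and invoking Lemma~\ref{lem:ext}(a) to rule out self-extensions, I infer that there exist non-isomorphic $X^\circ$-irreducibles $A \subseteq V_1|_{X^\circ}$ and $B \subseteq V_2|_{X^\circ}$ with $\Ext^1_{X^\circ}(B, A) \ne 0$.

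The final and most delicate step is to preclude this non-vanishing using McNinch's semisimplicity results for low-dimensional rational modules of simple algebraic groups: these guarantee that a rational $X^\circ$-module whose composition factors have dimensions small enough relative to $p$ and to the invariants of $X^\circ$ must itself be semisimple, forcing the relevant $\Ext^1$ to vanish. I plan to combine these bounds with the order constraint $|u| = p^{\lceil\log_p \dim V\rceil}$ coming from the single Jordan block action, with the severely restricted list of configurations for $V|_{X^\circ}$ given by Proposition~\ref{prop:irr new} (at most four composition factors, of known shape), and, in the exceptional $p = 2$, $X^\circ = A_l$ case, with a finer weight-theoretic analysis of how $u$ permutes the summands of $V_i|_{X^\circ}$. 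The main obstacle is precisely this last step: simultaneously reconciling the tight dimension bounds of McNinch with the Jordan block rigidity of $u$ in all the remaining configurations of $A$ and~$B$, which I expect will require a case division on the Lie type of $X^\circ$.
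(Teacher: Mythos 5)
Your setup matches the paper's: reduce to a non-split two-step extension $0\to V_1\to V\to V_2\to 0$, apply Proposition~\ref{prop:irr new} to each factor to get case~(1) with $r=1$, and then use Lemma~\ref{lem:(*)} and Lemma~\ref{lem:ext}(a) to reduce to showing that $\Ext^1_{X^\circ}(B,A)$ vanishes for the surviving pairs of non-isomorphic irreducibles. Up to that point the argument is sound and is essentially the paper's.

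The genuine gap is that your ``final and most delicate step'' is only a plan, and the plan as stated will not close all the cases. McNinch's semisimplicity bounds alone do \emph{not} eliminate every configuration: the paper finds that after combining Table~\ref{tab:SS97} with \cite[Thm.~1]{McN} one is still left with $X=A_2.2$, $p=2$, acting on two $8$-dimensional factors, and in the $m_1=2$ branch one is left with extensions between (twists of) the natural $A_l$-module and its dual. The mechanism that kills these survivors is not a dimension bound or a weight computation but an \emph{order} argument of a specific shape: since $u$ has a single Jordan block, $|u|\ge\dim V-1$, which exceeds the order of any unipotent element of $X^\circ\langle u^p\rangle$ acting by inner automorphisms; Lemma~\ref{lem:innercent} then manufactures a non-trivial unipotent element centralising $X^\circ$, and Lemma~\ref{lem:unip centr} says such an element can only exist if two composition factors of $V|_{X^\circ}$ are isomorphic --- which either contradicts the non-isomorphy you already established, or (in the $m_1=m_2=2$ case) forces $V_{11}\cong V_{21}$, whereupon one appeals to the absence of extensions between the natural $A_l$-module and itself or its dual. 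You cite neither Lemma~\ref{lem:innercent} nor Lemma~\ref{lem:unip centr}, and without this centraliser argument the $A_2.2$ case with $\dim V=16$ and the mixed $m_1=2$, $m_2=1$ case (where $\dim V=14$ and $|u|=16$) remain open. You should also note the preliminary step, via Proposition~\ref{prop:new SS} and McNinch, that neither composition factor can be trivial, which your sketch passes over.
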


\begin{proof}
By the main result of \cite{TZ13} we may assume that $X$ is not connected.
We argue by contradiction and so assume $V$ has two $X$-composition factors
$V_1$, $V_2:=V/V_1$. Then by Lemma~\ref{lem:soc} these are the socle and the
head of $V$ as an $X^\circ$-module. Applying Proposition~\ref{prop:irr new} to
$V_i$ and invoking the hypothesis that $X^\circ$ is simple, we find that
\ref{prop:irr new}(1) holds with $r=1$. In particular, for each $i$ we have
$V_i=V_{i1}\oplus\cdots\oplus V_{im_i}$ with irreducible $X^\circ$-modules
$V_{ij}$ where $m_i=1$, or $p=2$ and $m_i\in\{1,2\}$. Note that for any $j$
there must be a non-trivial extension between $V_{1j}$ and some $V_{2l}$.
\par
First assume that $m_1=m_2=1$, so $V_i$ is an irreducible $X^\circ$-module
for $i=1,2$. Since $X^\circ$ is simple, $u^p$ must act as
an inner automorphism on $X^\circ$. We claim that the order of $u^p$ in its
action on $V$ is bounded above by the order of a unipotent element of
$X^\circ$. Indeed, if the order of $u^p$ is larger than that, then by
Lemma~\ref{lem:innercent} there is a non-trivial central unipotent element in
$X$. On the other hand by Lemma~\ref{lem:ext}(a) the two composition factors
$V_1,V_2$ are not isomorphic as $X^\circ$-modules. This contradicts
Lemma~\ref{lem:unip centr}.
\par
Now $u$ acts by a single Jordan block on both $X^\circ$-composition factors, and
both are tensor indecomposable for $X^\circ$ by Proposition~\ref{prop:new SS}.
Thus each of these two faithful $X$-modules is either trivial or occurs in
Table~\ref{tab:SS97}.

By a result of McNinch \cite[Thm.~1]{McN} the possibilities for non-split
extensions between such $X^\circ$-modules are very
restricted. Namely, he shows that all $X^\circ$-modules of dimension below a
certain explicit bound are either completely reducible or contained on a short
list of exceptions in \cite[Tab.~5.1.1]{McN}. Based on his dimension bounds one
sees that there is no non-trivial extension between a module in
Table~\ref{tab:SS97} and the trivial $X^\circ$-module with $X$ disconnected.
\par
Hence $V_1$ and $V_2$ are non-trivial. Then again combining
Table~\ref{tab:SS97} with \cite[Thm.~1]{McN} we only arrive at the case that
$X$ acts as $A_2.2$ on $V_i$, with $p=2$, $\dim V_i=8$. Here, $V$ has
dimension~16, so $u$ has order~16, while $A_2.2$ has no such element. So
by Lemma~\ref{lem:innercent} there is some non-trivial unipotent element of $X$
centralising $X^\circ$, which implies with Lemma~\ref{lem:unip centr} that the
two composition factors must be isomorphic. So by Lemma~\ref{lem:ext}(a) 
there is no such non-split extension $V$, and hence $V$ is irreducible.
\par
Next consider the case that $m_1=p$. Then we have $p=2$, $X^\circ=A_l$ with
$l\ge2$ and $\dim V_{1j}=l+1$. Then $V_2$ cannot be trivial as argued in the 
preceding case. So if $m_2=1$, then $V_2$ is an irreducible $A_l$- and
$A_l.2$-module on which $u$ acts with a
single Jordan block. By Table~\ref{tab:SS97} this implies that $l=2$ and
$\dim V_2=8$. Recall that here $\dim V_1=6$. A regular unipotent element $u$ of
$\SL(V)$ has order~16. If $u\in X$ then $u^2$ acts as an inner element on
$A_2$. Since the unipotent elements of $A_2$ have order at most~4, there is
an element of order~8 centralising $A_2$ by Lemma~\ref{lem:innercent}. This
contradicts Lemma~\ref{lem:unip centr}.   \par
If $m_2=2$, then $\dim V_{2j}=l+1$ as well. Now unipotent elements of $A_l.2$
have order less than $4(l+1)=\dim V$. So $u$ has bigger order than any unipotent
element of $A_l.2$. Arguing as before this shows with Lemma~\ref{lem:unip centr}
that $V_{11}\cong V_{21}$ (after possibly renumbering), and so also
$V_{11}^*=V_{12}\cong V_{22}$. By untwisting we may assume that $V_{11}$ is
the natural module. But there is no non-trivial extension between the natural
$A_l$-module and itself or its dual.
This final contradiction completes the proof.
\end{proof}

The extension question for the exceptional modules showing up in
Proposition~\ref{prop:irr new}(2) can be discussed in a similar manner:

\begin{prop}   \label{prop:3.4(2)}
 Let $X=A_1^p.\langle u\rangle\le\SL(V)$ with $p\in\{2,3\}$ and $u$ regular
 unipotent in $\SL(V)$. If $V$ has at most two $X^\circ$-composition factors,
 one of which is the $p$-fold tensor product of isomorphic $2$-dimensional
 $A_1$-modules, then $V$ is an irreducible $X$-module.
\end{prop}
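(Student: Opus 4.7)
The plan is to assume that $V$ is reducible as an $X$-module and derive a contradiction, closely following the strategy of Proposition~\ref{prop:ext full}. By Lemma~\ref{lem:soc}, the socle of $V|_{X^\circ}$ is an irreducible $X$-submodule of $V$; combined with the hypothesis that $V$ has at most two $X^\circ$-composition factors, this forces $V$ to be a non-split extension
\[ 0\lra W_1\lra V\lra W_2\lra 0 \]
of two modules $W_1,W_2$, each irreducible both as $X$- and as $X^\circ$-module. Since $u$ cyclically permutes the $p$ simple factors of $X^\circ=A_1^p$ and any irreducible $X^\circ$-module is (externally) a tensor product of irreducible $A_1$-modules, the $u$-stability of the isomorphism class of each $W_i$ forces all $p$ tensor factors to coincide, so $W_i$ has the form $L(\la_i)^{\otimes p}$ for a single irreducible $A_1$-module $L(\la_i)$. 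By hypothesis, we may assume $W_1=M$, the specified $p$-fold tensor product of isomorphic 2-dimensional $A_1$-modules.

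First, suppose $W_2\cong M$ as $X^\circ$-modules. Then Lemma~\ref{lem:ext}(a), applied to the semisimple algebraic group $X^\circ$, yields $\Ext^1_{X^\circ}(M,M)=0$, so $V|_{X^\circ}\cong M\oplus M$ is completely reducible; Lemma~\ref{lem:cr=irr} then forces $V$ to be irreducible as an $X$-module, a contradiction. Henceforth we may assume $W_2\not\cong M$ as $X^\circ$-modules. Now $u^p$ stabilises each $A_1$-factor of $X^\circ$, and since $A_1$ admits no non-trivial outer automorphisms, $u^p$ acts on $X^\circ$ by an inner automorphism. Moreover, every unipotent element of $X^\circ=A_1^p$ has order at most $p$. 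Since $u$ acts with a single Jordan block of size $\dim V$ on $V$, its order $|u|$ is the smallest power of $p$ at least $\dim V$; in particular, if $\dim V\ge p^2+1$ then $|u^p|\ge p^2>p$, and Lemma~\ref{lem:innercent} applied with $N=X^\circ$ and $v=u^p$ yields a non-trivial unipotent element central in $H:=X^\circ\langle u^p\rangle$. But $W_1$ and $W_2$ are non-isomorphic as $X^\circ$-modules, hence as $H$-modules, and so Lemma~\ref{lem:unip centr} applied to $H$ gives a contradiction.

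The sole remaining case has $\dim V\le p^2$. Combined with the lower bound $\dim V\ge\dim M+1=2^p+1$, the inequality $2^p+1\le p^2$ holds only for $p=3$, where it forces $\dim V=9$ and hence $W_2\cong k$ trivial. For this configuration a direct K\"unneth computation yields
\[ \Ext^1_{X^\circ}(k,M)\cong H^1(A_1^3,L(1)\otimes L(1)\otimes L(1))=0, \]
since every summand in the K\"unneth decomposition of $H^1$ contains a tensor factor $H^0(A_1,L(1))=0$. Hence the extension $V|_{X^\circ}$ splits, and Lemma~\ref{lem:cr=irr} once more supplies the desired contradiction. The main obstacle is precisely this boundary case $p=3$, $W_2=k$, where the order-of-$u$ argument via Lemma~\ref{lem:innercent} barely fails and must be replaced by a direct cohomological vanishing.
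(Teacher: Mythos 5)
Your proposal is correct and relies on essentially the same ingredients as the paper's proof: the non-trivial central unipotent element produced via Lemmas~\ref{lem:innercent} and~\ref{lem:unip centr} when $u$ has large enough order, the absence of self-extensions from Lemma~\ref{lem:ext}(a) combined with Lemma~\ref{lem:cr=irr}, and the vanishing of $\Ext^1$ between the $p$-fold tensor product and the trivial module. The only organisational difference is that you split according to whether the two $X^\circ$-composition factors are isomorphic and dispose of the boundary case $p=3$, $\dim V=9$ by a K\"unneth computation, whereas the paper establishes the same $\Ext^1$-vanishing up front via the weight-subdominance criterion of \cite[II.2.14]{Ja03} and thereby forces the dimension to be large enough for the order argument.
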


\begin{proof}
Assume that $V$ has two $X$-composition factors $V_1<V$ and $V_2=V/V_1$, on
one of which $X^\circ$ acts faithfully. By passing to the
dual, we may assume that this is $V_2$. First consider $p=2$. Then
$\dim V_2=2^p=4$ and $\dim V\ge \dim V_2+1=5$, so the regular unipotent element
$u\in\SL(V)$ has order
at least $8$. But it normalises an $A_1^2$, so by Lemma~\ref{lem:innercent} that
$A_1^2$ is centralised by an element of order~4. Then Lemma~\ref{lem:unip centr}
shows that the two $X^\circ$-composition factors must be isomorphic. As there
are no non-trivial self-extensions for $X^\circ$ by Lemma~\ref{lem:ext}(a),
$V$ is in fact completely reducible as an $X^\circ$-module, and hence an
irreducible $X$-module by Lemma~\ref{lem:soc}, contrary to our assumption.
\par
When $p=3$ then note that there cannot be a non-trivial
extension between the $A_1^3$-module $V_2$ and the trivial module, since the
zero weight is not subdominant to the highest weight of $V_2$. Thus we may
assume that $X$ also acts by a non-trivial semisimple group on $V_1$. Since the
smallest dimension of a faithful $A_1^3$-module is~6,
$\dim V\ge \dim V_2+6\ge14$, and so $u\in\SL(V)$ has order at least~27. We can
now argue exactly as in the case $p=2$, with an element of order~9 centralising
$X^\circ$.
\end{proof}

We can now prove the main result of this section, establishing
Theorem~\ref{thm:main} for the classical algebraic groups $G$ of type $A_l$,
$B_l$ and $C_l$. Recall that it suffices to consider any group isogenous to $G$.

\begin{thm}   \label{thm:SLn}
 Let $G$ be simple of type $A_l,B_l$ or $C_l$ and $X\le G$ be a reductive
 subgroup of the form
 $X=X^\circ\langle u\rangle$ for a regular unipotent element $u$ of $G$ with
 $[X^\circ,X^\circ]\ne1$. Then $X$ does not lie in any proper parabolic subgroup
 of $G$.
\end{thm}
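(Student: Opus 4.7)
The plan is to argue by contradiction: the hypothesis $X\le P$ for a proper parabolic $P\le G$ is equivalent to $V|_X$ being reducible, and I would aim to derive a contradiction. By the reductions described at the start of Section~\ref{sec:SL(V)} together with Lemma~\ref{lem:Jordan}, I may take $G\in\{\SL(V),\Sp(V),\SO(V)\}$ with $u$ acting as a single Jordan block on the natural module~$V$. Remark~\ref{rem:char 0} then lets me assume $p>0$, and Remark~\ref{rem:transitive} lets me assume the simple components of $X^\circ$ are transitively permuted by~$u$.

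Assuming $V|_X$ reducible, Lemma~\ref{lem:cr=irr} gives that $V|_{X^\circ}$ is not completely reducible. Consider the socle filtration $0=W_0<W_1<\cdots<W_k=V$ of $V$ as a $[X^\circ,X^\circ]$-module; by Lemma~\ref{lem:soc} each successive quotient is $X$-irreducible, so this is in fact an $X$-composition series with $k\ge 2$. I would focus on the $X$-submodule $V':=W_2$, which has exactly two $X$-composition factors and on which $u$ still acts as a single Jordan block (since $V'$ is $u$-invariant). This is precisely the configuration of Propositions~\ref{prop:ext full} and~\ref{prop:3.4(2)}: if $[X^\circ,X^\circ]$ is simple, Proposition~\ref{prop:ext full} forces $V'$ to be $X$-irreducible, contradicting $W_1\lneq V'$; if $X^\circ=A_1^p$ with $p\in\{2,3\}$ and a composition factor of $V'$ is a $p$-fold tensor product of two-dimensional $A_1$-modules, Proposition~\ref{prop:3.4(2)} yields the same contradiction.

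The main obstacle is the remaining configuration, in which $X^\circ=Y_1\cdots Y_r$ is a product of $r\ge 2$ isomorphic simple components permuted transitively by $u$, with no composition factor of the tensor-product shape required by Proposition~\ref{prop:3.4(2)}. Proposition~\ref{prop:irr new} then shows that each $X$-composition factor of $V'$ decomposes as $V_1\oplus\cdots\oplus V_m$ along the $u$-orbit structure, with each $Y_i$ acting non-trivially only on a prescribed subset of the summands. To handle this, I would pass to $\langle Y_1,u^r\rangle$ (noting that $u^r$ normalises $Y_1$, with Jordan form on $V$ controlled by Lemma~\ref{lem:power}) and analyse its action on the $Y_1$-isotypic part of $V'$, thereby reducing to an almost-simple situation to which Proposition~\ref{prop:ext full} applies. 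The order-of-element estimates built into the proofs of Propositions~\ref{prop:ext full} and~\ref{prop:3.4(2)} — Lemmas~\ref{lem:innercent} and~\ref{lem:unip centr} for centralised unipotents, Lemma~\ref{lem:ext} to rule out self-extensions among irreducible $X^\circ$-modules, and McNinch's bounds for small non-semisimple modules — should carry over to produce the required contradiction in this final case.
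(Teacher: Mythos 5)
Your outline follows the paper's proof quite closely: reduce to $\SL(V)$ (though note that for types $B_l,C_l$ the equivalence ``$X$ lies in a proper parabolic iff $V|_X$ is reducible'' is not literal --- one needs Borel--Tits to push the parabolic of $\Sp(V)$ or $\SO(V)$ into one of $\SL(V)$, as the paper does), pass to a subquotient with exactly two $X$-composition factors via Lemma~\ref{lem:soc}, and dispose of the cases ``$[X^\circ,X^\circ]$ simple'' and ``$X^\circ=A_1^p$'' by Propositions~\ref{prop:ext full} and~\ref{prop:3.4(2)}. Up to dualising (you keep the bottom two socle layers, the paper passes to the top two), this is the paper's argument.

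The gap is in the case you yourself flag as the main obstacle, $X^\circ=Y_1\cdots Y_r$ with $r\ge2$: ``should carry over'' is doing all the work there. Two concrete steps are missing. First, you must rule out the possibility that one of the two composition factors is trivial for $X^\circ$ (say $\dim V_1=1$): in that situation the reduction to $Y_1\langle u^r\rangle$ does not come from an $\Ext$ computation at all; the paper instead counts Jordan blocks --- since $\dim V=r\dim W_1+1$, Lemma~\ref{lem:power} forces $u^r$ to have a block of size $\dim W_1+1$ on $V$, which is incompatible with Propositions~\ref{prop:ext full} and~\ref{prop:3.4(2)} applied to the preimages $\tilde W_i$. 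Second, when $X^\circ$ is faithful on both factors, in order to apply Proposition~\ref{prop:ext full} to $Y_1\langle u^r\rangle$ you need to know that $u^r$ acts with a \emph{single} Jordan block on the relevant $Y_1$-submodule $\tilde W_1$ --- that is the hypothesis of that proposition, and it is not automatic. The paper obtains it by decomposing $\Ext^1_{X^\circ}(V_1,V_2)$ via Lemmas~\ref{lem:(*)} and~\ref{lem:ext fact}, killing the cross terms $\Ext^1_{X^\circ}(W_{1i},W_{2j})$ for $i\ne j$ by comparison of highest weights, and thereby producing a $u^r$-stable splitting $V|_{Y_1}=\tilde W_1\oplus V'$ with $Y_1$ trivial on $V'$; only then does the equal-block-size count ($u^r$ has $r$ blocks of size $\dim V/r$ on $V$) force a single block on $\tilde W_1$. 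Without these two steps the reduction to the almost-simple case, and hence the final contradiction, is not justified.
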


\begin{proof}
As noted in Remark~\ref{rem:char 0} we may assume $p>0$. If $p=2$ then we need
not consider $G=B_l$ as it is isogenous to $C_l$. Now embed $X\le G\le\SL(V)$
via its natural representation. Then by Lemma~\ref{lem:Jordan}, $u$ has a
single Jordan block on $V$. If $X$ lies in some proper parabolic subgroup of
$G$, with unipotent radical $U$, then $X\le N_{\SL(V)}(U)$, which by
Borel--Tits lies in a proper parabolic subgroup of $\SL(V)$. Thus it is
sufficient to establish the result in the case that $G=\SL(V)$. Here the claim
is equivalent to showing that $X$ is irreducible on $V$. For this we may replace
$X^\circ$ by $[X^\circ,X^\circ]$ and therefore assume that $X^\circ$ is a
non-trivial semisimple group. Furthermore, by Remark~\ref{rem:transitive} we
may also assume that $u$ is transitive on the set of simple components of
$X^\circ$ by replacing the latter by one of the $u$-orbits. In particular,
$X^\circ$ acts faithfully on any $X$-composition factor of $V$ on which
it acts non-trivially.
\par
For a contradiction, assume $X$ is reducible on $V$. By passing to a suitable
quotient of~$V$, where $u$ still acts by a single Jordan block, we may assume
that $X$ has exactly two composition factors on $V$. By Lemma~\ref{lem:soc}, any
composition series for $X$ is a socle series for $X^\circ$, which thus also has
two layers.
\par
As $X^\circ\ne1$ there is at least one term in the socle series $0<V_1<V$ on
which $X^\circ$ does not act trivially (which we may assume to be the top layer
$V_2=V/V_1$, by going to the dual if necessary). By
Proposition~\ref{prop:irr new}, $X^\circ$ acts on $V_2$ through $Y_1\cdots Y_r$
with isomorphic semisimple groups $Y_i$ and $r>0$. Let
$V_2=W_1\oplus\cdots\oplus W_r$ be the corresponding decomposition into
$X^\circ\langle u^r\rangle$ modules as in Remark~\ref{rem:prop irr}, with $W_i$
an irreducible $Y_i\langle u^r\rangle$-module and $Y_i$ acting trivially on
$W_j$ for $j\ne i$.
\par
We claim that $X^\circ$ also acts non-trivially on $V_1$. Assume otherwise, so
$\dim V_1=1$. Now $u$ permutes the semisimple factors $Y_1,\ldots,Y_r$
transitively, as well as the $W_1,\ldots,W_r$. For $1\le i\le r$
let $\tilde W_i$ denote the full preimage of $W_i$ in $V$. As
$Y_i\langle u^r\rangle$ stabilises $W_i$, it acts on $\tilde W_i$. If $Y_i$ is
simple, then by Proposition~\ref{prop:ext full}, $u^r$ cannot act with a single
Jordan block on any of the $\tilde W_i$. If $Y_i$ is not simple, so we are in
case~(2) of Proposition~\ref{prop:irr new}, we reach the same conclusion by
Proposition~\ref{prop:3.4(2)}. With $\dim V=r\dim W_1+1$ this contradicts the
fact that by Lemma~\ref{lem:power}, $u^r$ has at least one Jordan block of
size $\dim W_1+1$ on $V$. 
\par
Thus, $X^\circ$ acts faithfully on $V_1$ and on $V_2$. So we have
$V_s=W_{s1}\oplus\cdots\oplus W_{sr}$, where $W_{si}$ is an irreducible
$Y_i\langle u^r\rangle$-module, for $s=1,2$ and $i=1,\ldots,r$. Then
$$\begin{aligned}
  \Ext_{X^\circ}^1(V_1,V_2)
  &=\bigoplus_{i}\Ext_{X^\circ}^1(W_{1i},W_{2i})\oplus
    \bigoplus_{i\ne j}\Ext_{X^\circ}^1(W_{1i},W_{2j})\\
  &=\bigoplus_{i}\Ext_{Y_i}^1(W_{1i},W_{2i})\oplus
    \bigoplus_{i\ne j}\Ext_{X^\circ}^1(W_{1i},W_{2j})
\end{aligned}$$
by Lemmas~\ref{lem:(*)} and~\ref{lem:ext fact}. Now note that the second sum is
zero by comparing highest weights (where we use the fact that if there is a
non-trivial extension between two simple modules for a semisimple algebraic
group then their weights are comparable, see, e.g., \cite[II.2.14]{Ja03}).
Thus, $\Ext_{Y_i}^1(W_{1i},W_{2i})\ne0$ for
some, and hence, all $i$. So $V|_{Y_1}=\tilde W_1\oplus V'$ with $\tilde W_1$ a
non-split extension of $W_{11}$ with $W_{21}$ as a $Y_1$-module and $Y_1$
acting trivially on $V'$. Since $u^r$ normalises $Y_1$, it stabilises this
decomposition of $V$. But $u^r$ acts with $r$ Jordan blocks of size
$\dim V/r=\dim \tilde W_1$ on $V$, so it must act by a single Jordan block on
$\tilde W_1$.    \par
If $Y_i$ is simple, as in case~(1) of Proposition~\ref{prop:irr new}, there is
no such extension $\tilde W_1$ of the two irreducible
$Y_1\langle u^r\rangle$-modules $W_{11},W_{21}$ by
Proposition~\ref{prop:ext full}, giving the desired
contradiction. Hence we must be in the case that $Y_1=A_1^p$, $r=m$, and the
action on the $W_{2i}$ is as in Proposition~\ref{prop:irr new}(2). Again, there
is no such non-split extension by Proposition~\ref{prop:3.4(2)}.
\end{proof}

Thus for Theorem~\ref{thm:main}, as far as simple groups of classical type are
concerned, it remains to consider groups of type~$D_l$.

\section{The case of $\SO(V)$}   \label{sec:SO(V)}

In this section we consider the following situation: $\dim V=2l$ for $l\geq4$,
and $X\leq\SO(V)$ is a (not necessarily connected) reductive subgroup of the
form $X=X^\circ\langle u\rangle$, with $X^\circ$ not a torus, for a regular
unipotent element $u$ of $\SO(V)$. Recall from
Lemma~\ref{lem:Jordan}(c) that regular unipotent elements of $\SO(V)$ have two
Jordan blocks on $V$, of sizes $2l-1,1$ if $p\ne2$, and sizes $2l-2,2$ when
$p=2$.

\subsection{Almost simple groups containing an element with a large Jordan block }

\begin{prop}   \label{prop:l-1}
 Let $p=2$ and $G\le\SL(V)$ be an almost simple algebraic group acting
 irreducibly on $V$ and such that some unipotent element of $G$ has a Jordan
 block of size $\dim V-1$. Then $G=\SL(V)$.
\end{prop}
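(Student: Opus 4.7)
The plan is a two-stage attack: first, use Clifford theory to reduce to the case that $G^\circ$ acts irreducibly on $V$; then apply a Saxl--Seitz style classification of almost simple irreducible subgroups containing unipotent elements with a large Jordan block.

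For the reduction, I would decompose $V|_{G^\circ} = V_1 \oplus \cdots \oplus V_k$ into pairwise non-isomorphic irreducible $G^\circ$-modules of common dimension $m$, transitively permuted by $G/G^\circ$ via Lemma~\ref{lem:cyc ext}. Since $u$ is unipotent in the algebraic group $G$, its image in the finite quotient $G/G^\circ$ is a $p$-element under the Jordan decomposition, so every cycle of $u$ on $\{V_i\}$ has length a power of~$2$. Following the analysis from the proof of Lemma~\ref{lem:cr=irr}, the Jordan blocks of $u$ on the sum over a cycle of length $c$ are obtained by multiplying each Jordan block of $u^c$ on a representative $V_i$ by $c$. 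A Jordan block of size $n - 1 = km - 1$ must therefore arise from a single cycle with $c\lambda = n - 1$, and the constraint $cm \geq n - 1$ yields $(k-c)m \leq 1$; ruling out $m = 1$ (which would force the simple group $G^\circ$ to act trivially) forces $c = k$, and then $k \mid km - 1$ forces $k = 1$. Hence $G^\circ$ is irreducible on $V$.

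Second, I would apply Lemma~\ref{lem:tens 1} to conclude $V$ is tensor indecomposable as a $G^\circ$-module. After passing, if necessary, to the unipotent part of a suitable power of $u$ lying in $G^\circ$ (as in the opening paragraph of the proof of Proposition~\ref{prop:new SS}), any non-trivial $u$-stable tensor decomposition of $V$ would, by Lemma~\ref{lem:tens 1}, force $\dim V = 4$ and Jordan blocks $(2,2)$ in characteristic~$2$, incompatible with the block of size $n - 1$.

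With $G^\circ$ simple, irreducible and tensor indecomposable on $V$, and $u$ of Jordan type $(n-1,1)$ normalising $G^\circ$, I would invoke the classification underlying Proposition~\ref{prop:new SS} (suitably adapted from a regular unipotent of $\SL(V)$ to the weaker hypothesis of a unipotent of Jordan type $(n-1,1)$), combined with the order bound $|u| \geq 2^{\lceil\log_2(n-1)\rceil}$ and minimum-dimension estimates for faithful irreducibles of simple algebraic groups. This reduces the candidates for $(G^\circ,V)$ in characteristic~$2$ to a short list: $A_{n-1}$ on the natural module, the classical groups $\Sp(V)$ and $D_{n/2}.2$ on their natural modules, and a handful of small exceptional configurations from Table~\ref{tab:SS97} (such as $G_2$ on its $6$-dimensional module and $A_2.2$ on its $8$-dimensional module). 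Every non-$\SL(V)$ candidate is then eliminated using Lemma~\ref{lem:Jordan} together with Spaltenstein's~\cite{Sp82} description of unipotent classes in classical groups in characteristic~$2$ and Lawther's tables~\cite{La95} for exceptional types: none affords a unipotent element with Jordan block structure $(n-1,1)$ on the relevant module. Only $G^\circ = A_{n-1}$ on the natural module survives, forcing $G = \SL(V)$.

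The main obstacle lies in this last elimination step, most notably the symplectic case $G^\circ = \Sp(V)$ in characteristic~$2$: one must rule out a unipotent element of $\Sp(V)$ with Jordan type $(n-1,1)$ on the natural module, for which the refined Spaltenstein classification of characteristic~$2$ unipotent classes (incorporating the extra labels on even-sized Jordan blocks beyond the bare partition data) is required.
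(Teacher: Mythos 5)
Your proposal is correct in substance, and its engine is the same as the paper's: play the order of a unipotent element of $G$ (at least $n-1$, but less than $2d$ for simple classical $G$ with natural module of dimension $d$, resp.\ less than $4m$ for $A_{m-1}.2$ and $D_m.2$) against the known lists of low-dimensional irreducible modules, and then eliminate the survivors using the characteristic-$2$ constraint that odd-sized Jordan blocks occur with even multiplicity in symplectic and orthogonal groups (\cite[Lemma~6.2]{LS12}), together with Lawther's tables. Two differences from the paper are worth recording. First, your opening Clifford-theoretic reduction to $G^\circ$ irreducible on $V$ is not performed in the paper, which instead treats $A_{m-1}.2$ and $D_m.2$ directly and folds the induced $2m$-dimensional modules into its case analysis, disposing of them via the embedding into $\GO_{2m}$; your parity argument (all blocks on the span of a $c$-cycle have size divisible by $c$, while $n-1$ is odd) is a clean alternative that buys a slightly tidier case division. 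Second, be careful with ``suitably adapting'' Proposition~\ref{prop:new SS}: that result and \cite[Prop.~2.2]{SS97} presuppose a regular unipotent element of the ambient classical group, so there is nothing to adapt --- one must rerun the sieve from scratch, which is exactly what your order-versus-dimension estimate (and the paper, via \cite[Thms.~4.4 and~5.1]{Lue}) accomplishes; for the exceptional types the paper instead quotes \cite[Thm.~1.1]{TZ18}, since an element of type $(n-1,1)$ has a single non-trivial block, but your estimate disposes of them equally well. The one point to tighten is that, executed honestly, the sieve leaves more survivors than your list suggests: besides the natural modules and the Lie algebra of $A_2$, Lübeck's tables also return spin modules for $C_2$, $C_3$, $D_4$ and (a priori) exterior squares for $A_3$, $A_4$ and the $4$-dimensional tensor module for $A_1$; these are not in Table~\ref{tab:SS97} but are eliminated by the same parity argument applied to the image in $\Sp_4$ or $\GO_8$, as in the paper. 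With that enumeration completed, your plan goes through.
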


\begin{proof}
For $G$ a simple group of exceptional type, the irreducible representations of
$G$ whose image contains a unipotent element with a single non-trivial Jordan
block are
classified in \cite[Thm.~1.1]{TZ18}, and only $G_2$ in its 6-dimensional
representation comes up. But here no unipotent elements have blocks of size 5.
(See e.g. \cite{La95}.)
The only almost simple but not simple group of exceptional type is $G=E_6.2$.
Here, the regular unipotent elements of $G$ have order~32, but $G$ has no
faithful irreducible representation of dimension at most~33 (the 27-dimensional
modules for $E_6$ are not invariant under the graph automorphism).  \par
Now assume that $G$ is simple, simply connected, of classical type but not
equal to $\SL(V)$ and let $d$ denote the dimension of its natural module. Here
we may assume $G$ is not an odd dimensional orthogonal group as we will consider
the isogenous symplectic group. The unipotent elements of $G$ have order less
than~ $2d$. On the other hand, writing $n=\dim V$, an element with a Jordan
block of size $n-1$ has order at least $n-1$, so we get $n< 2d+1$.
All irreducible representations of such dimensions are known (see e.g.
\cite[Thms.~4.4 and~5.1]{Lue}), and we find that either $n=d$, or up to
twists one of the following holds:
\begin{itemize}
 \item $G=A_1$ with $d=2$ and $V$ is the tensor product of two 2-dimensional
  modules, or
 \item $G=A_{d-1}$ with $4\le d\le5$ and $V$ is the exterior square of the
  natural module, or
 \item $G=C_{d/2}$ for $d\in\{4,6,8\}$ and $V$ is the spin module, or
 \item $G=D_{d/2}$ for $d=8,10$ and $V$ is a spin module.
\end{itemize}
Looking at the precise order of unipotent elements rules out all cases except
spin  modules for $C_2$, $C_3$  and $D_4$. In the latter two cases, the image
of the representation lies in the 8-dimensional orthogonal group, where there
are no unipotent elements with Jordan blocks of size~7, and for the case of
$G=C_2$, both 4-dimensional representations have image in the symplectic group
where there are also no unipotent elements with a block of size 3. Finally, if
$n=d$, by the above remarks we may assume $V$ is the natural representation of
$G$ ($\ne\SL(V)$), we note that $G$ does not have unipotent elements of the
required Jordan block sizes on $V$ in characteristic~2 (see
\cite[Lemma~6.2]{LS12}).
\par
It remains to consider $G$ almost simple of type $A_{m-1}.2$ or
$D_m.2=\GO_{2m}$. Here, unipotent
elements have order less than $4m$, so as before we conclude $n< 4m+1$. Now
note that the natural module for $A_{m-1}$ is not invariant under the graph
automorphism, nor is its exterior square for $m\ne4$.
While for $m>4$ the spin modules for $D_m$ do not
afford representations of $D_m.2$. Again with \cite{Lue}
we arrive at the Lie algebra for $A_2$, the exterior square of the natural
module for $A_3$, and modules of dimension~$2m$. All of these embed $G$
into a general orthogonal group, but the latter does not have unipotent
elements of the required Jordan type in its natural representation (see
\cite[Lemma~6.2]{LS12}). So none of these lead to examples, completing the
proof.
\end{proof}

\subsection{A reduction result}

\begin{prop}   \label{prop:orthonew1}
 Assume $p>0$. Let $X=X^\circ\langle u\rangle\le\SO(V)$ with $\dim V=2l\ge6$,
 be reductive, $X^\circ\ne1$ semisimple, and $u$ regular unipotent in $\SO(V)$.
 Assume that $X$ does not stabilise any non-zero totally singular subspace of
 $V$. Then one of the following four mutually exclusive cases occurs:
 \begin{enumerate}[\rm(1)]
  \item $X=X^\circ$ is irreducible on $V$; more specifically, either
   $X=\SO(V)$, or $l=4$ and $X=B_3$;
  \item $p=2$, $l$ is even, $X=A_{l-1}.2$ is irreducible on $V$ stabilising a
   pair of complementary totally singular subspaces interchanged by $u$;
  \item there is an orthogonal decomposition $V=V'\perp V''$ into $X$-submodules
   $V',V''$, where $\dim V''=\gcd(p,2)$ and $V'$ is irreducible and tensor
   indecomposable for $X$; or 
  \item $p=2$, $X$ stabilises a 1-dimensional non-singular subspace $V_1$
   of~$V$, it acts as a subgroup of $B_{l-1}$ on $V_1^\perp$ with $u$ having a
   single Jordan block on $V_1^\perp/V_1$, and there exists no
   $X^\circ$-complement to $V_1$ in $V_1^\perp$.
 \end{enumerate}
\end{prop}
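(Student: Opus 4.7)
The plan is to exploit the restrictive Jordan shape of $u$ recorded in Lemma~\ref{lem:Jordan}(c): $u$ has two Jordan blocks on $V$, of sizes $(2l-1,1)$ if $p\ne 2$ and $(2l-2,2)$ if $p=2$. I shall split the analysis according to whether $V$ is irreducible for $X$ and, when $V$ is reducible, according to the form-theoretic structure of a minimal non-zero $X$-submodule $W$.

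First, suppose $V$ is irreducible for $X$. If $V|_{X^\circ}$ is already irreducible, I would apply Proposition~\ref{prop:new SS} after using Remark~\ref{rem:transitive} to arrange that $u$ permutes the simple factors of $X^\circ$ transitively; combined with the requirement that $u$ has more than one Jordan block and $\dim V\ge 8$, Table~\ref{tab:SS97} forces $X=\SO(V)$ or the exceptional inclusion $X=B_3<D_4$, giving case~(1). If instead $V|_{X^\circ}$ is reducible, Lemma~\ref{lem:cyc ext} decomposes $V|_{X^\circ}$ into pairwise non-isomorphic irreducible summands cyclically permuted by~$u$; the two-block bound on~$u$ together with Lemma~\ref{lem:power} should force the $u$-orbit to have length~$2$, so $V|_{X^\circ}=U_1\oplus U_2$ with $\dim U_i=l$ and $u$ swapping the summands. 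Each $U_i$ will be totally singular (otherwise a non-degenerate $X^\circ$-invariant form on $U_1$ would force $U_1\cong U_2$ as $X^\circ$-modules, contradicting non-isomorphism). Applying Proposition~\ref{prop:new SS} to the action on~$U_1$ and matching with the Jordan shapes of $A_{l-1}.2$ from Lemma~\ref{lem:Jordan}(d),(e), one is pushed to $p=2$, $l$ even, and $X=A_{l-1}.2$, i.e.\ case~(2).

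Now suppose $V$ is reducible for $X$, and let $W$ be a minimal non-zero, hence $X$-irreducible, submodule. Since $\rad(Q|_W)$ is totally singular and $X$-invariant, the hypothesis forces $Q|_W$ to be non-degenerate. When $p\ne 2$ the associated bilinear form is also non-degenerate on~$W$, yielding an $X$-invariant orthogonal decomposition $V=W\perp W^\perp$. Since a single Jordan block cannot be broken across a $u$-invariant direct sum, matching $u$'s Jordan type $(2l-1,1)$ forces $\{\dim W,\dim W^\perp\}=\{2l-1,1\}$. Taking $V''$ the one-dimensional summand and $V'$ the other, $V'$ will be $X$-irreducible by the same single-block argument and tensor indecomposable by Lemma~\ref{lem:tens 1} (using $\dim V'=2l-1\ge 7$); this is case~(3). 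For $p=2$ I would separate two subcases. If $b|_W$ is non-degenerate, the same reasoning with $u$-type $(2l-2,2)$ yields $\{\dim W,\dim W^\perp\}=\{2,2l-2\}$, giving case~(3) with $\dim V''=2$. Otherwise $W\cap W^\perp\ne 0$, so by irreducibility $W\subseteq W^\perp$, i.e.\ $b|_W=0$. Since $k$ is algebraically closed of characteristic~$2$, any $W$ of dimension $\ge 2$ with trivial bilinear form admits a non-zero isotropic vector, and the subspace of isotropic vectors of such a $W$ is an $X$-submodule of totally singular vectors, contradicting the hypothesis. Hence $\dim W=1$, and $V_1:=W$ is a non-singular line fixed by~$X$. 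Then $V_1\subseteq V_1^\perp$, the quotient $V_1^\perp/V_1$ carries a non-degenerate $(2l-2)$-dimensional quadratic form, and $X$ acts on $V_1^\perp$ through the stabiliser of $V_1$ in $\SO(V)$, a subgroup of $B_{l-1}$. A Jordan-type comparison then gives a single block of size $2l-2$ on $V_1^\perp/V_1$. The non-existence of an $X^\circ$-complement to $V_1$ in $V_1^\perp$ is exactly what rules out landing in case~(3), since an orthogonal decomposition $V=V'\perp V''$ with $\dim V''=2$ containing $V_1$ would produce such a complement.

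The main obstacle will be the characteristic~$2$ analysis. Pinning down case~(2), where $V$ is irreducible for $X$ but not for $X^\circ$, requires careful Clifford--Jordan bookkeeping to rule out longer $u$-orbits on the $X^\circ$-summands and to force $X=A_{l-1}.2$ with $l$ even. Cleanly separating cases~(3) and~(4) also hinges on the subtle distinction between $Q$-non-degeneracy and $b$-non-degeneracy on $X$-irreducible submodules, and on identifying the extension-theoretic obstruction that prevents a non-singular $1$-dimensional $X$-submodule from splitting off as a direct $X^\circ$-summand of $V_1^\perp$.
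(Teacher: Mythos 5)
Your case division and the four target configurations are right, but several load-bearing steps are unjustified, and the common thread is that you never invoke Theorem~\ref{thm:SLn}, on which the paper's proof leans throughout. First, ``a single Jordan block cannot be broken across a $u$-invariant direct sum'' only controls direct sum decompositions; it does not show that the large summand $V'$ in case~(3) is $X$-irreducible, since a single Jordan block preserves a complete flag. For $p\ne2$ one can patch this using the hypothesis on totally singular subspaces (a proper submodule of $V'$ would be non-degenerate and would split off a third orthogonal summand), but for $p=2$ a proper submodule $U\le V'$ could satisfy that $U\cap U^\perp$ is a non-singular line, and ruling this out requires Theorem~\ref{thm:SLn} (this is exactly how the paper obtains irreducibility of $V'$, and also why it must separately dispose of the possibility that $X^\circ$ acts trivially on $V'$, via Lemma~\ref{lem:reg inner}). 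Likewise in case~(4), ``a Jordan-type comparison'' does not give a single block of size $2l-2$ on $V_1^\perp/V_1$: for an abstract invariant line inside an invariant hyperplane of $J_{2l-2}\oplus J_2$ the quotient can perfectly well have two blocks. The paper gets the single block from Lemma~\ref{lem:Iulian} ($u$ is regular in the stabiliser $B_{l-1}$ of $V_1$) together with Lemma~\ref{lem:Jordan}, and then needs Theorem~\ref{thm:SLn} again to make $V_1^\perp/V_1$ irreducible for $X$ before it can prove the no-complement clause --- which you flag as the key obstruction but do not actually prove.

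Second, in the irreducible-but-$X^\circ$-reducible branch your argument that the Clifford summands $U_1,U_2$ are totally singular is wrong: non-degeneracy of $U_1$ forces $U_1\cong U_1^*$, not $U_1\cong U_2$, so there is no clash with the non-isomorphism coming from Lemma~\ref{lem:cyc ext}. The paper excludes the non-degenerate alternative by quoting \cite[Thm.~B(ii)(a)]{SS97}: the stabiliser of an orthogonal decomposition $V=U_1\perp U_2$ lies in no reductive maximal subgroup containing a regular unipotent element. Moreover, once $U_1$ is totally singular, Proposition~\ref{prop:new SS} is the wrong tool to identify $X$: the element $u^2$ acts on $U_1$ with Jordan blocks of sizes $l-1,1$, so it is not regular unipotent in any classical group on $U_1$; the paper instead uses Lemma~\ref{lem:tens 2} to force $X^\circ$ simple and then Proposition~\ref{prop:l-1} (the block-of-size-$(\dim V-1)$ classification) to conclude $X=A_{l-1}.2$. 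Finally, note that the hypothesis $2l\ge6$ allows $l=3$, where Proposition~\ref{prop:new SS} does not apply directly to the orthogonal group and the paper gives a separate argument via the $4$-dimensional module for $A_3\cong D_3$; your appeal to ``$\dim V\ge8$'' silently drops this case.
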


\begin{proof}
First assume that $V$ is a decomposable $X$-module. Then by the Jordan block
shape of $u$ we must have $V=V'\oplus V''$ with $\dim V''=\gcd(2,p)$, and $u$
acts with a single Jordan block on both summands. If $X^\circ$ acts
non-trivially on $V'$ then by Theorem~\ref{thm:SLn}, $X$ acts irreducibly on
$V'$. Since $\dim V'>\dim V/2$ then $V'$ must be non-degenerate, so we obtain an
$X$-invariant decomposition $V=V'\perp (V')^\perp$. Lemma~\ref{lem:tens 1}
shows that $V'$ is tensor indecomposable for $X$, so we reach conclusion~(3).
On the other hand, if $X^\circ$ is trivial on $V'$ then it must act faithfully
on $V''$ and hence $X^\circ=A_1$ and $p=2$. In particular, $u$ acts by an inner
automorphism on $X^\circ$ and thus $X^\circ$ contains a regular unipotent
element of $\SO(V)$ by Lemma~\ref{lem:reg inner}, which is impossible as these
have order at least~4.
\par
So now assume that $V$ is an indecomposable $X$-module. In particular, there
is no $X$-invariant non-degenerate non-trivial proper subspace of $V$.
Thus, if $V_1$ denotes a non-zero $X$-invariant subspace of $V$ of
minimal dimension, then either $V_1=V$, that is, $X$ acts irreducibly on $V$, or
$V_1$ is non-singular of dimension~1 and $p=2$. In the latter case $X$ is
contained in the stabiliser of $V_1$, isomorphic to $B_{l-1}$, and
$V_1^\perp$ is the natural module for $B_{l-1}$. Let's first discuss this
situation. Now $u$ is regular unipotent in $B_{l-1}$ by Lemma~\ref{lem:Iulian},
so it has a single Jordan block on $V_1^\perp/V_1$. By Theorem~\ref{thm:SLn}, 
$X$ acts irreducibly on $V_1^\perp/V_1$, and thus $(V_1^\perp/V_1)|_{X^\circ}$
is a direct sum of non-isomorphic irreducible $X^\circ$-modules by
Lemma~\ref{lem:cyc ext}, all
non-trivial as $X^\circ$ acts non-trivially on $V_1^\perp/V_1$. Assume that
$V_1^\perp|_{X^\circ}=V_1\oplus N$. As $N\cong V_1^\perp/V_1$ has no trivial
$X^\circ$-composition factor, this decomposition is $X$-invariant. By
dimension reasons, the irreducible $X$-module $N$ must be non-degenerate, but
this was excluded before.
Thus $V_1$ has no $X^\circ$-complement in $V_1^\perp$, and hence~(4) holds.
\par
Thus we are left to consider the case that $X$ acts irreducibly on $V$. By
Lemma~\ref{lem:cyc ext} then $V|_{X^\circ}=V_1\oplus\cdots\oplus V_m$ is a
direct sum of non-isomorphic irreducible $X^\circ$-modules transitively
permuted by $u$. Then $2l=mr$ with $r=\dim V_1>1$ and $m$ is a power of $p$.
First assume $u$ has Jordan blocks of sizes $2l-1,1$ on $V$. Induction from
Lemma~\ref{lem:power} shows that $u^m$ has $m-1$ Jordan blocks of size~$r$, one
Jordan block of size~$r-1$ and one Jordan block of size~1 on $V$.
Since $u^m$ has the same Jordan blocks on each~$V_i$ by transitivity,
the only compatible solution is $m=1$. If $u$ has Jordan blocks of sizes
$2l-2,2$ on $V$ (and so in particular $p=2$) then Lemma~\ref{lem:power}
shows that either $m=1$, or $u^m$ has $m-2$ blocks of size $r$, and 2 blocks of
sizes $r-1,1$ each. Again this forces $m=2$. In conclusion either $m=1$, or
$m=p=2$. Let us consider these two cases in turn.
\par
If $m=1$, that is, if $V|_{X^\circ}$ is irreducible, then $X^\circ$ is simple
by Lemma~\ref{lem:tens 2} as none of (1)--(3) there can occur here. If
$X^\circ=X=\SO(V)$ we are in case~(1) of our statement. If not, then by
Proposition~\ref{prop:new SS} the only possibility for $l\ge4$ is again the one
given in (1). For $l=3$, using that $X^\circ$ must also be irreducible on the
natural 4-dimensional module $U$ for $A_3\cong D_3$ (e.g. by Borel--Tits),
Proposition~\ref{prop:new SS}
implies that we must have $X^\circ=A_1$ with $p\ge5$ or $X^\circ=C_2=B_2$. But
in neither of these cases does $X^\circ$ act irreducibly on the exterior square
$\Lambda^2(U)=V$, so this case does not occur here.
\par
Now consider the case where $m=p=2$ so that the Jordan blocks of $u^2$ on $V_i$,
$i=1,2$, have sizes~$l-1,1$. We claim that $V_1$ is totally singular. For
otherwise, $V_1$ is non-degenerate and thus so is $V_1^\perp$ which must be
$V_2$, and therefore $X$ is contained in the stabiliser in $\SO(V)$ of the
orthogonal decomposition $V=V_1\perp V_2$, hence in
$\GO(V_1)\GO(V_2).2\cap \SO(V)$. But by \cite[Thm.~B(ii)(a)]{SS97} there is no
reductive maximal subgroup of $\SO(V)$ containing this stabiliser and a regular
unipotent elements of $\SO(V)$.
\par
We thus have that $V_1$ is totally singular as claimed; then so is its image
$V_2$ under $u$. Hence $X$ stabilises a decomposition of $V$ into a direct sum
of maximal totally singular subspaces and thus $X\le\GL_l.2$. According to
Lemma~\ref{lem:Jordan}(d) when $l$ is odd, the regular unipotent elements of
the stabiliser $\GL_l.2$ of such a decomposition have a single Jordan block, so
cannot lie in $\SO(V)$. Thus $X\le\GL_l.2\cap\SO(V)=\GL_l$, which does not
contain elements with a Jordan block of size $2l-2$. On the
other hand, when $l$ is even the stabiliser $\GL_l.2$ contains regular unipotent
elements of $\SO(V)$. Lemma~\ref{lem:tens 2} now implies that $X^\circ$ is
simple, since the case~(3) with $\dim V_1=9$ cannot occur here as $l$ is even.
By Proposition~\ref{prop:l-1} this gives the examples in~(2).
\end{proof}

In what follows we investigate further the case (4) of the preceding result.

\begin{prop}   \label{prop:4.4(4)}
 In the situation of Proposition~$\ref{prop:orthonew1}(4)$, the following hold:
 \begin{enumerate}[\rm(a)]
  \item $X^\circ=Y_1\cdots Y_r$, with pairwise isomorphic factors $Y_i=B_n$,
   $Y_i=C_n$ (with $n\ge1$) or $Y_i=G_2$, permuted transitively by $u$;
  \item there is a decomposition $V_1^\perp/V_1\cong\bigoplus_{i=1}^r U_i$ into
   $Y_i\langle u^r\rangle$-modules $U_i$, irreducible for $Y_i$ and
   transitively permuted by $u$, and on which $u^r$ acts by a single Jordan
   block, with $\dim U_i=2n$ when $Y_i=B_n$ or $C_n$, respectively $\dim U_i=6$
   when $Y_i=G_2$.
 \end{enumerate}
\end{prop}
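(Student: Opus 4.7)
The plan is to apply Proposition~\ref{prop:irr new} to the action of $X$ on the quotient $V_1^\perp/V_1$, which has dimension $2l-2$ and on which $u$ acts by a single Jordan block, so that $u$ is regular unipotent in $\SL(V_1^\perp/V_1)$; Theorem~\ref{thm:SLn} then ensures that $X$ acts irreducibly on this quotient. Proposition~\ref{prop:irr new} thus yields a decomposition $V_1^\perp/V_1|_{X^\circ}=U_1\oplus\cdots\oplus U_m$ with the $U_i$ transitively permuted by $u$ and $u^m$ acting by a single Jordan block on each, together with $X^\circ=Y_1\cdots Y_r$ ($r\mid m$) in one of the three structural cases. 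The aim is to reduce to case~(1) with $r=m$ and each $Y_i$ simple of the stated type.

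The essential additional input from Proposition~\ref{prop:orthonew1}(4) is that the short exact sequence of $X^\circ$-modules
\[
0\to V_1\to V_1^\perp\to V_1^\perp/V_1\to 0
\]
is non-split, so $\Ext^1_{X^\circ}(V_1^\perp/V_1,k)\ne0$. By Lemma~\ref{lem:(*)} together with $u$-equivariance, this forces $\Ext^1_{X^\circ}(U_i,k)\ne0$ for every~$i$, and Lemmas~\ref{lem:(*)} and~\ref{lem:ext fact} reduce the condition further to a non-vanishing $\Ext^1_{Y_i}(U_i,k)\ne0$ for each~$i$.

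Next I would eliminate the unwanted cases. Case~(2) of Proposition~\ref{prop:irr new}, with $Y_i=A_1^2$ acting on a $4$-dimensional tensor product of two natural $A_1$-modules, fails the Ext condition via a K\"unneth-style computation showing $H^1(A_1\times A_1,L(1)\otimes L(1))=0$ (both factors $H^0(A_1,L(1))\otimes H^1(A_1,L(1))$ and $H^1(A_1,L(1))\otimes H^0(A_1,L(1))$ vanish since $L(1)^{A_1}=0$). Case~(1) with $r<m$ is incompatible with the conclusion~(b), which requires $u^r$ to stabilise each $U_i$; in that case $u^r$ maps $U_i$ to $U_{i+r}\ne U_i$. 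Thus we must be in case~(1) of Proposition~\ref{prop:irr new} with $r=m$, each $Y_i$ simple, and by Proposition~\ref{prop:new SS} each pair $(Y_i,U_i)$ comes from Table~\ref{tab:SS97}; restricting to simple connected groups with $p=2$ leaves $A_l$ on $L(\varpi_1)$, $C_l$ on the natural $2l$-dimensional module, and $G_2$ on its $6$-dimensional natural module. The case $A_l$ for $l\ge2$ is then ruled out using that $X^\circ$ preserves the non-degenerate symplectic form on $V_1^\perp/V_1$ induced from the quadratic form on $V$ in characteristic~$2$: since the $U_i$ are pairwise non-isomorphic (Lemma~\ref{lem:cyc ext}) and $U_j\not\cong U_i^*$ for $j\ne i$ (they have different non-trivial $Y$-components in $X^\circ$), the symplectic form cannot pair distinct $U_i,U_j$, so must restrict non-degenerately to each $U_i$, forcing each $U_i$ to be self-dual, which contradicts the non-self-duality of $L(\varpi_1)$ for $A_l$ with $l\ge 2$.

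The surviving cases match (a) and (b): $Y_i$ is of type $B_n$, $C_n$ ($n\ge 1$), or $G_2$ acting on its natural $2n$- or $6$-dimensional module, with the required non-trivial extension of $U_i$ by the trivial module realised by the non-semisimple natural $(2n+1)$-dimensional $B_n$-module, respectively the non-semisimple $7$-dimensional $G_2$-module in characteristic~$2$. The transitive $u$-permutation, cyclic $u^r$-stability on each $U_i$, and single-Jordan-block action of $u^r$ come directly from Proposition~\ref{prop:irr new}. The main obstacle is the cohomological analysis in characteristic~$2$: the K\"unneth vanishing for $A_1\times A_1$, the self-duality argument ruling out $A_l$ with $l\ge 2$, and the correct identification of $B_n$-versus-$C_n$-type embeddings of $Y_i$ into $\SO(V_1^\perp)$ --- distinguished in characteristic~$2$ by the exceptional isogeny between them --- each require careful handling of Frobenius twists and the precise structure of the natural modules of classical groups in this characteristic.
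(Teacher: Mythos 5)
Your overall strategy coincides with the paper's: apply Theorem~\ref{thm:SLn} and Proposition~\ref{prop:irr new} to $V_1^\perp/V_1$, extract the non-vanishing of $\Ext^1_{Y_i}(U_i,k)$ from the non-existence of an $X^\circ$-complement to $V_1$ in $V_1^\perp$ (Proposition~\ref{prop:orthonew1}(4)), and eliminate case~(2) of Proposition~\ref{prop:irr new} by a K\"unneth computation. Up to that point the proposal matches the paper. The endgame, however, has two genuine gaps.

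First, your elimination of case~(1) with $r<m$ is circular: you rule it out because ``it is incompatible with conclusion~(b)'', which is part of what you are proving. This matters because the configuration $r<m$ (which by Proposition~\ref{prop:irr new} forces $Y_i=A_l$ with $l\ge2$, $m=2r$, and $U_{i+r}=u^r(U_i)$ a twist of the dual of $U_i$) is exactly the one your subsequent self-duality argument cannot reach: there $U_i$ and $U_{i+r}$ have the same non-trivial $Y$-component, so the symplectic form on $V_1^\perp/V_1$ may pair $U_i$ with $U_{i+r}$, no $U_i$ need be self-dual, and no contradiction arises. Second, your list of candidates from Table~\ref{tab:SS97} silently drops the disconnected entries $A_2.2$ (on the $8$-dimensional module of highest weight $\varpi_1+\varpi_2$) and $D_l.2$ (on the natural module); these occur when $u^m$ induces a graph automorphism on $Y_i$ and are legitimate candidates that must be excluded. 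Both gaps are repaired by the criterion you already derived and then abandoned, namely $\Ext^1_{Y_i}(U_i,k)\ne0$: by \cite{McN} this Ext group vanishes for $A_l$ on a twist of $L(\varpi_1)$, for $A_2$ on $L(\varpi_1+\varpi_2)$ and for $D_l$ on the natural module, so in all these cases (including the $r<m$ configuration) each $\tilde U_i$ splits and one assembles an $X^\circ$-complement to $V_1$ in $V_1^\perp$, contradicting Proposition~\ref{prop:orthonew1}(4). Only $B_n$, $C_n$ and $G_2$ survive, and since $r<m$ occurs only for $A_l$, this also yields $r=m$. That is precisely the paper's argument; your self-duality detour is both unnecessary and, as written, insufficient.
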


\begin{proof}
We keep the notation from Proposition~\ref{prop:orthonew1}(4). Recall that here
$p=2$. As $u$ acts by a single Jordan block on $V_1^\perp/V_1$, this is an
irreducible $X$-module by Theorem~\ref{thm:SLn}. So by
Proposition~\ref{prop:irr new} there is a decomposition $X^\circ=Y_1\cdots Y_r$
with $u$ transitively permuting the semisimple factors $Y_i$.

We first show that we are not in case (2) of Proposition~\ref{prop:irr new}.
Suppose the contrary. Then $Y_i=X_{2i-1}X_{2i}$ with $X_j=A_1$, $r=m$, and
$V_1^\perp/V_1 = U_1\oplus\cdots\oplus U_r$, where $U_i$ is an
$X_{2i-1}X_{2i}$-module which is a twist of a tensor product of two natural
modules for $A_1$ and the $U_i$ are transitively permuted by $u$. Let
$\tilde U_i$ denote the full preimage of $U_i$ in $V_1^\perp$, so
$V_1^\perp=\sum\tilde U_i$. The K\"unneth formula \cite[Lemma~3.3.6]{Stew13}
shows that $\Ext_{X_{2i-1}X_{2i}}^1(U_i,k)=0$, whence
$\tilde U_1 = V_1\oplus N_1$, with $N_1\cong U_1$. So there is a
similar decomposition for all of the $\tilde U_i$, leading to a decomposition
of the $X^\circ$-module $V_1^\perp = V_1\oplus (\sum N_i)$, contradicting
Proposition~\ref{prop:orthonew1}(4).

Hence we are in case~(1) of Proposition~\ref{prop:irr new} and all $Y_i$ are
simple. Let us write $\oi:=i\pmod r$. Now
$V_1^\perp/V_1 = U_1\oplus\cdots\oplus U_m$, where the $U_i$ are irreducible
tensor indecomposable $Y_\oi$-modules, transitively permuted by $u$, and $u^m$
acts by a single Jordan block on each. Hence the
possibilities for $(Y_\oi\langle u^m\rangle,U_i)$ are as listed in
Table~\ref{tab:SS97}. Moreover, arguing as in the preceding paragraph we see
that $\Ext^1_{Y_\oi}(U_i,k)\ne 0$. Now by \cite{McN} all pairs
$(Y_\oi\langle u^m\rangle,U_i)$ have $\Ext_{Y_\oi}^1(U_i,k)=0$ except possibly
for $Y_\oi=B_n$, $Y_\oi=C_n$ or $Y_\oi=G_2$ with $\dim U_i$ as claimed, so by 
Proposition~\ref{prop:irr new} we have $r=m$ and we get~(a) and~(b).
\end{proof}

The following proposition treats the special case arising out of
Proposition~\ref{prop:4.4(4)} when $r=1$. By our Lemma~\ref{lem:reg inner}
$X^\circ$ contains a regular unipotent element. This case should have been
treated in \cite{TZ13} but the argument there is incomplete in precisely this
setting. So we have included a proof here. 

\begin{prop}   \label{prop:TZ}
 Let $X$ be a simple algebraic group with $X<G =\SO(V)$, $\dim V =2l\ge8$,
 defined over a field of characteristic~$2$. Assume that $X$ stabilises a
 non-zero totally singular subspace of $V$. Then $X$ does not contain a regular
 unipotent element of $G$.
\end{prop}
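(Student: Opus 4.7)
I would argue by contradiction. Suppose $u \in X$ is regular unipotent in $G = \SO(V)$, and let $W$ be a non-zero $X$-stable totally singular subspace of maximal dimension $d$. Let $P = P_W$ be its stabiliser in $G$, a proper parabolic of $G$, with Levi decomposition $L = \GL(W) \times \SO(W^\perp/W)$ (the second factor absent when $d = l$). Since $X$ is simple, $X \cap R_u(P)$ is a finite central subgroup of $X$ consisting of unipotent elements, so the projection $\pi : X \to L$ is a central isogeny onto its simple image $\pi(X)$. By Lemma~\ref{lem:Iulian}, $u$ is regular in $X$, and by Lemma~\ref{lem:reg in Levi}, $\bar u := \pi(u)$ is regular in $L$. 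In particular $\bar u$ acts on $W$ as a single Jordan block of size $d$ and, when $l - d \geq 2$, on $W^\perp/W$ with Jordan blocks of sizes $2(l-d)-2$ and $2$.

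Since $\pi(X)$ is simple, each of its coordinate projections to $\GL(W)$ and $\SO(W^\perp/W)$ is either trivial or a central isogeny onto its image. Triviality on the $\SO$-factor forces $W^\perp/W = 0$ and hence $d = l$; triviality on $\GL(W)$ forces $d = 1$ (a simple algebraic group admits no non-trivial map to the torus $\GL_1$). Otherwise both projections are non-trivial and $\pi(X)$ embeds diagonally. The principal case is $d = l$: here $V$ sits in a non-split extension $0 \to W \to V \to W^* \to 0$ of $X$-modules with $\bar u$ a single Jordan block of size $l$ on both $W$ and $W^*$, and Proposition~\ref{prop:new SS} forces $(\pi(X), W)$ into Table~\ref{tab:SS97}. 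In characteristic~$2$ the possibilities are $A_{l-1}$ in its natural representation, $C_{l/2}$ in its natural representation (with $l$ even), or $G_2$ on its $6$-dimensional irreducible (with $l = 6$). For each candidate, the plan is to compute $\Ext^1_{\pi(X)}(W^*, W)$ using the cohomological results of McNinch~\cite{McN}: if it vanishes then $V \cong W \oplus W^*$, so $u$ has Jordan type $l,l$ on $V$, contradicting the required $2l-2,2$ since $l \geq 4$; if non-zero, I would exhibit the extension cocycle explicitly and verify, using the compatibility constraint from the $X$-invariant orthogonal form pairing $W$ with $W^*$, that no extension class produces the Jordan type $2l-2,2$. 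This final verification is the main obstacle of the proof, requiring detailed cohomological information combined with careful analysis of how the cocycle interacts with the $X$-invariant quadratic form.

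The remaining cases should follow more directly. In the case $d = 1$, $X$ acts trivially on $W$ and $\pi(X)$ is a simple subgroup of $\SO(W^\perp/W)$ containing a regular unipotent of that smaller orthogonal group. An induction on $\dim V$ via the present proposition (supplemented by a direct check for the base case $\dim V = 8$) shows that $\pi(X)$ does not stabilise a non-zero totally singular subspace of $W^\perp/W$, so Proposition~\ref{prop:orthonew1} identifies $\pi(X)$ as $\SO(W^\perp/W)$ itself or the exception $B_3 < D_4$; Jordan-block bookkeeping on the filtration $0 \subset W \subset W^\perp \subset V$, whose $X$-composition factors are trivial, natural, trivial, then rules out Jordan type $2l-2,2$ on $V$. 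In the diagonal case, $\pi(X)$ must appear simultaneously in Table~\ref{tab:SS97} (type $A$, $C$ or $G_2$) and in Proposition~\ref{prop:orthonew1} (type $D$ or $B_3$); the only compatible possibilities are the isogeny $A_3 \cong D_3$ (giving $d = 4$, $l = 7$, $\dim V = 14$) and the characteristic~$2$ isogeny $C_3 \cong B_3$ (giving $d = 6$, $l = 10$, $\dim V = 20$), both of which can be eliminated by explicit Jordan-block calculation on the resulting embeddings.
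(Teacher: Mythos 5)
Your setup (the parabolic stabilising a maximal $X$-invariant totally singular subspace $W$, the Levi projections, Lemmas~\ref{lem:Iulian} and~\ref{lem:reg in Levi}, Table~\ref{tab:SS97}) matches the paper's, but each of your three branches has a genuine gap, and you miss the one observation that disposes of all of $2\le\dim W\le l-1$ at a stroke. Since $X$ is simple, the kernel of $X\to\GL(W)$ is central, hence contains no unipotent element, so the image of $u$ in $\GL(W)$ has order $|u|=2^{\lceil\log_2(2l-2)\rceil}$; but that image is a single Jordan block of size $d=\dim W$, of order $2^{\lceil\log_2 d\rceil}\le|u|/2$ whenever $d\le l-1$. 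This kills your entire ``diagonal case'' in one line. Your own treatment of that case is not correct as written: Proposition~\ref{prop:orthonew1} does not force $\pi(X)$ to be ``of type $D$ or $B_3$'' --- that is only its case~(1), and its cases~(3) and~(4) admit groups of types $B_n$, $C_n$ and $G_2$, so the claimed reduction to $A_3\cong D_3$ and $C_3\cong B_3$ is unjustified. In your ``principal case'' $d=l$ you explicitly leave the decisive $\Ext$-computation open; it does work out (\cite[Cor.~1.1.1]{McN} for $A_{l-1}$, and absence of self-extensions for the self-dual $C_{l/2}$- and $G_2$-modules, so $0\to W\to V\to W^*\to0$ splits and $u$ has type $l,l$), and some such argument is genuinely needed when $l=2^a+1$, where the order comparison is tight; but as submitted this case is asserted, not proved.

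The most serious gap is at $d=1$, which is where the paper has to work hardest. You apply Proposition~\ref{prop:orthonew1} to the image of $X$ in $\SO(W^\perp/W)$ and keep only its case~(1). But cases~(3) and~(4) are live: $\pi(X)$ may stabilise a non-singular $1$-space of $W^\perp/W$, and by Proposition~\ref{prop:4.4(4)} this allows $\pi(X)=B_{l-2}$, $C_{l-2}$ or $G_2$ (with $l=5$). These configurations cannot be excluded by Jordan-block bookkeeping on composition factors: a uniserial $X$-module with layers of dimensions $1,2l-2,1$ on which the middle layer carries blocks $2l-4,2$ is perfectly compatible with Jordan type $2l-2,2$ on $V$. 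The paper rules them out by showing $W^\perp/W$ is an indecomposable tilting module for $B_{l-2}$ (respectively for $G_2\le B_3\le D_4$, via \cite[Lemma 9.1.1]{LS04}), hence has no non-trivial extension by the trivial module, forcing $X$ into a Levi factor --- impossible for an overgroup of a regular unipotent element. Even in your case~(1) the same issue arises: you need $\Ext^1$ between $W^\perp/W$ and the trivial module to vanish before the bookkeeping applies. These cohomological steps are the real content of the paper's proof and are absent from yours; the induction on $\dim V$ you propose is, by contrast, unnecessary, since maximality of $W$ already prevents $\pi(X)$ from stabilising a totally singular subspace of $W^\perp/W$.
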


\begin{proof}
Assume $u\in X$ is regular unipotent.
Choose a maximal totally singular $X$-invariant subspace $W$. Then the
stabilizer $P = N_G(W)$ has Levi factor $L=\GL(W)\SO(W^\perp/W)$ and the
projection of $X$ into the second factor does not lie in a proper parabolic
subgroup. Also $X\ne A_1$ as the regular unipotent elements in $G$ have order at
least~8.

By Lemma~\ref{lem:reg in Levi}, for $Q = R_u(P)$ the image of $X$ in
$P/Q\cong L$ contains a regular unipotent element of $L$. Thus, if $\dim W>1$,
the projection onto the factor $\GL(W)$ of $L$ is injective when restricted
to $X$, but the order of a regular unipotent element in $\GL(W)$ is strictly
less than the order of $u$. Hence we need only consider the case where
$\dim W=1$. Let $\pi:P\to \SO(W^\perp/W)$ be the natural projection. Since
$\pi(X)$ does not lie in a proper parabolic subgroup of
$\SO(W^\perp/W)$, by \cite[Lemma 2.2]{LT04}, one of:
\begin{enumerate}[(i)]
 \item $(W^\perp/W)\downarrow \pi(X) = W_1\perp W_2\perp\cdots\perp W_t$, with
  all $W_i$ non-degenerate, inequivalent and irreducible $\pi(X)$-modules; or
 \item $\pi(X)$ stabilizes a nonsingular 1-space of $W^\perp/W$.
\end{enumerate}

In the first case, when $t=1$, since $\pi(X)$ contains a regular unipotent
element of $\SO(W^\perp/W)$, Proposition~\ref{prop:new SS} implies that either
$\dim(W^\perp/W) = 8$, $X = B_3$ and the
action of $X$ on $W^\perp/W$ is via a spin module, or $X = \SO(W^\perp/W)$. In
both cases, there are no non-trivial extensions between $W^\perp/W$ and the
trivial $X$-module and we deduce that $X$ (and hence $u$) lies in the Levi
factor, a contradiction.

In the first case, with $t\geq 2$, the Jordan block structure of $u$ then
implies that $t=2$ and we may assume $\dim W_2 = 2$. But then the projection of
$\pi(X)$ to $\SO(W_2)$ is trivial as the latter is a torus, contradicting the
Jordan block structure of $\pi(u)$.

So we now have that $\pi(X)$ lies in the stabilizer of a nonsingular 1-space of
$W^\perp/W$; let $U/W$ be such a subspace, so that $X$ stabilizes the flag
$0<W<U<U^\perp<W^\perp<V$.

Now Proposition~\ref{prop:orthonew1}(4) gives that
$\pi(u)$ has one block on $U^\perp/U$ and so by Table~\ref{tab:SS97} we are left
with the following irreducible actions on $U^\perp/U$:
\begin{itemize}
 \item $\pi(X) = C_{l-2}$ or $B_{l-2}$;
 \item $X = G_2$, $l=5$.
\end{itemize}

In the first case, we deduce that $\pi(X) = B_{l-2}$ (the full stabilizer in
$\SO(W^\perp/W)$ of a non-singular $1$-space), and so $W^\perp/W$ is a
$(2l-2)$-dimensional tilting module for $\pi(X)$.
In particular, there is no extension of this module by a trivial and we find
that the $X$ lies in a Levi factor of $G$. This rules out the case where
$X = B_{l-2}$ or $C_{l-2}$. 

In the second case, we have $G_2Q/Q\leq B_3Q/Q\leq D_4T_1\cong P/Q$, and
the action of $X$ on $W^\perp/W$ is as an $8$-dimensional indecomposable
tilting module (see \cite[Lemma 9.1.1]{LS04}) and as above there is no
non-trivial extension with the trivial module. So $X$ lies in a proper Levi
factor of $G$, and hence cannot contain a regular unipotent element. This final
contradiction completes the proof.
\end{proof}

\begin{prop}   \label{prop:orthonew2}
 Let $X=X^\circ\langle u\rangle\le\SO(V)$ with $\dim V=2l\ge8$ be reductive,
 $X^\circ\ne1$ semisimple, and $u$ regular unipotent in $\SO(V)$. Assume that
 $X$ lies in a proper parabolic subgroup of $\SO(V)$. Then, with $W$ an
 $X$-invariant totally singular subspace of $V$ of maximal possible dimension,
 one of the following two cases occurs:
 \begin{enumerate}[\rm(1)]
  \item $W$ is maximal totally singular, irreducible as an $X$-module, on which
   $u$ acts with a single Jordan block; or
  \item $0<W<W^\perp$, $\dim  W^\perp/W\ge6$, and $X^\circ$ acts
   non-trivially on $W^\perp/W$ and is not completely reducible on $W^\perp$.
 \end{enumerate}
\end{prop}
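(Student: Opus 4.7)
The plan is to split the argument according to whether $W$ is maximal totally singular or strictly smaller.

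In the case $\dim W=l$, so that $W=W^\perp$ is maximal totally singular, the stabiliser $P:=N_{\SO(V)}(W)$ is a parabolic of $\SO(V)$ with Levi complement $L\cong\GL(W)$. Since $X$ is reductive and lies in $P$, we may conjugate so that $X^\circ\le L$. Then Lemma~\ref{lem:reg in Levi} applied to the regular unipotent $u\in P$ yields that its image $\bar u\in L=\GL(W)$ is regular unipotent, hence acts as a single Jordan block of size $l$ on $W$. Because $X^\circ$ is semisimple and acts faithfully on $V$, it cannot act trivially on $W$: the pairing $V/W\cong W^*$ coming from the form would otherwise force $X^\circ$ to act trivially on $V$. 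Therefore the image $\bar X\le\GL(W)$ is reductive with $[\bar X^\circ,\bar X^\circ]\ne 1$ and contains the regular unipotent $\bar u$, so Theorem~\ref{thm:SLn} forces $\bar X$ to act irreducibly on $W$. This gives conclusion~(1).

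Now assume $\dim W<l$, and let $\pi\colon P\to\SO(W^\perp/W)$ be the projection through the Levi complement $\GL(W)\times\SO(W^\perp/W)$ of $P$. The first key observation is that $\pi(X)$ fixes no non-zero totally singular subspace of $W^\perp/W$: for the preimage of any such subspace would be a totally singular $X$-invariant subspace of $V$ strictly containing $W$, contradicting maximality. By Lemma~\ref{lem:reg in Levi}, $\pi(u)$ is regular unipotent in $\SO(W^\perp/W)$. If $\pi(X^\circ)=1$, then $\pi(X)=\langle\pi(u)\rangle$ is cyclic unipotent and lies in a Borel subgroup of $\SO(W^\perp/W)$, hence stabilises a totally singular subspace, a contradiction. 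This establishes that $X^\circ$ acts non-trivially on $W^\perp/W$.

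To establish the dimension bound $\dim W^\perp/W\ge 6$, I would exclude the small cases explicitly. The case $\dim W^\perp/W=2$ is immediate since $\SO_2$ is a torus and cannot contain the non-trivial connected semisimple group $\pi(X^\circ)$. The case $\dim W^\perp/W=4$ is the most delicate step: there I would enumerate the non-trivial connected semisimple subgroups of $\SO_4\cong(\SL_2\times\SL_2)/\{\pm 1\}$ (namely the full $\SO_4$, the two $A_1$-factors, and the diagonal $A_1$) together with the possible $u$-actions on the ambient module, and rule out each configuration by combining the requirement that $\pi(X)$ fix no totally singular subspace with the Jordan block description of $u$ given in Lemma~\ref{lem:Jordan}.

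Finally, to prove that $X^\circ$ is not completely reducible on $W^\perp$, I would argue by contradiction. Suppose $W^\perp=W\oplus W_1$ as $X^\circ$-modules. Because $W$ is contained in the radical of the form on $W^\perp$, the complement $W_1$ is a non-degenerate orthogonal subspace of $V$ isometric to $W^\perp/W$. Analysing the $\langle u\rangle$-action on the affine space of $X^\circ$-complements to $W$ via the inflation-restriction sequence (cf.\ Lemma~\ref{lem:ext fact}) together with the cyclic structure of $X/X^\circ$, I would locate a $u$-invariant choice, producing an $X$-invariant orthogonal decomposition $V=W_1^\perp\perp W_1$. Projecting $u$ to the two factors, its Jordan type on $V$ (of shape $(2l-1,1)$ or $(2l-2,2)$ by Lemma~\ref{lem:Jordan}) must split as a multiset union of the Jordan types on the two summands, each of which must lie in the appropriate $\SO$. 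A direct check, using that $\SO_2$ contains no unipotent elements and that the parts $2l-1$ and $2l-2$ are too large to fit inside a proper summand for $l\ge 4$, then yields a contradiction and completes the proof.
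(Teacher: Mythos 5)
Your handling of the case $W=W^\perp$ and your proof that $X^\circ$ acts non-trivially on $W^\perp/W$ match the paper's argument. Two of your remaining steps, however, have genuine gaps. First, the exclusion of $\dim W^\perp/W=4$: the criteria you propose (no invariant totally singular subspace, plus Jordan block shapes) do not eliminate all configurations. Both the full $\SO_4$ and the diagonal $A_1\le A_1A_1=\SO_4$ act on the $4$-space with no invariant totally singular subspace, and both contain elements of the Jordan type $(3,1)$ (resp.\ $(2,2)$ when $p=2$) of a regular unipotent element of $\SO_4$, so no purely local data rules them out. The paper disposes of these via a global argument: the image of $X^\circ$ being $A_1$ or $A_1^2$ forces $u$ to act by an inner automorphism on $X^\circ$, so Lemma~\ref{lem:reg inner} places a regular unipotent element of $\SO(V)$ inside the connected group $X^\circ$, contradicting \cite[Thm.~1.2]{TZ13}. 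You need that input; your sketch does not supply a substitute.

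Second, and more seriously, your proof that $X^\circ$ is not completely reducible on $W^\perp$ rests on producing a $u$-invariant $X^\circ$-complement to $W$ in $W^\perp$, which you propose to extract from inflation--restriction. This step fails. The set of $X^\circ$-complements is a torsor under $\Hom_{X^\circ}(W^\perp/W,W)$, and the obstruction to a $\langle u\rangle$-fixed point lies in $H^1(X/X^\circ,\Hom_{X^\circ}(W^\perp/W,W))$, the cohomology of a finite cyclic $p$-group with coefficients in a $k$-vector space in characteristic $p$, which is in general non-zero (already $H^1(\ZZ/p,k)\ne0$). Lemma~\ref{lem:ext fact} does not apply here, since $W$ and $W^\perp/W$ are not trivial $X^\circ$-modules, and $\Hom_{X^\circ}(W^\perp/W,W)$ can genuinely be non-zero in this situation (for instance when $X^\circ$ is trivial on $W$ and $W^\perp/W$ has a trivial constituent). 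The paper's argument is structured quite differently: assuming $X^\circ$-complete reducibility, it observes that the homogeneous components of $W^\perp|_{X^\circ}$ are canonically defined and hence permuted by $u$ in at most two orbits (because $u$ has exactly two Jordan blocks on $W^\perp$), and then eliminates the transitive and two-orbit cases using Lemma~\ref{lem:tensor}, Theorem~\ref{thm:SLn} and the known bounds on Jordan block sizes in orthogonal groups. Your concluding Jordan-block count would indeed yield a contradiction if an $X$-invariant orthogonal decomposition existed, but you have not established its existence.
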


\begin{proof}
Let $0<W<V$ be as in the statement. So $X$ lies in a proper parabolic subgroup
of $\SO(V)$ with Levi complement $\GL(W)\SO(W^\perp/W)$. By
Lemma~\ref{lem:reg in Levi} the image of $u$ in the Levi factor is again
regular unipotent, in particular $u$ acts by a single Jordan block on $W$. Also
note that $X^\circ$ acts non-trivially on $W^\perp$, as otherwise it would act
trivially on $V/W^\perp\cong W^*$ as well and hence on all of $V$.
If $W=W^\perp$ is maximal totally singular, then as $X$ acts irreducibly on $W$
by Theorem~\ref{thm:SLn}, we arrive at conclusion~(1).
\par
So, we now assume that $0<W<W^\perp$, and thus $X$ is reducible on $W^\perp$.
By Theorem~\ref{thm:SLn} this implies that $u$ cannot have a single Jordan
block on $W^\perp$. Thus, $u$ has exactly two Jordan blocks on $W^\perp$, one
of which has size at most~$\gcd(p,2)$. We set $n:=\dim W^\perp$.

Note that $X$ does not stabilise any non-zero totally singular subspace of
$W^\perp/W$ by the choice of $W$. This then implies that $X^\circ$ acts
non-trivially on $W^\perp/W$ since $u$, lying in a Borel subgroup, has some
totally singular fixed points. In particular $\dim W^\perp/W\ge4$. In fact, if
$\dim(W^\perp/W)=4$ then the image of $X^\circ$ in $\SO_4$ and hence $X^\circ$
itself is either $A_1$ or $A_1^2$. If $X^\circ=A_1$ then $u$ acts on it by an
inner automorphism, so $X^\circ$ contains a regular unipotent element of $G$ by
Lemma~\ref{lem:reg inner}, contradicting the main result of \cite{TZ13}.
Similarly, if $X^\circ=A_1^2\cong\SO_4$ is the full Levi factor and thus
$X=X^\circ$, we conclude by the same argument. Hence we have
$\dim W^\perp/W\ge6$. If $X^\circ$ is not completely reducible on $W^\perp$,
then we arrive at conclusion~(2).

So now assume that $W^\perp$ is a completely reducible $X^\circ$-module.
We will show that this leads to a contradiction. Write
$W^\perp|_{X^\circ}=V_1\oplus\cdots\oplus V_m$ for the decomposition of
$W^\perp$ into its $X^\circ$-homogeneous components. Then $u$ permutes these
components and can have at most
two orbits on $\{V_1,\ldots,V_m\}$, as it has two Jordan blocks on $W^\perp$. 
\par\smallskip\noindent
{\bf Case 1}: We first discuss the case where $u$ is transitive on
$\{V_1,\ldots,V_m\}$. Arguing precisely as in the proof of
Proposition~\ref{prop:orthonew1} the Jordan block shape of $u$ forces either
$m=1$, or $m=p=2$. We consider these two cases in turn.
\par
If $m=1$, that is, if $W^\perp|_{X^\circ}$ is homogeneous, then by
Lemma~\ref{lem:tensor} we find that $\dim W^\perp=4$, as $X^\circ$ is reducible
on $W^\perp$, contradicting $\dim W^\perp/W\ge6$.
\par
If $m=p=2$, one checks that the Jordan blocks of $u^2$ on $V_i$, $i=1,2$, have
sizes~$n/2-1,1$. As $V_i$ is $X^\circ\langle u^2\rangle$-invariant and
homogeneous as an $X^\circ$-module, it must be irreducible for $X^\circ$ by
Lemma~\ref{lem:tensor}. Now $u$ interchanges $V_1$ and $V_2$, so $W^\perp$ is
irreducible for~$X$, a contradiction.
\par\smallskip\noindent
{\bf Case 2}: So now assume that $u$ has two orbits on $\{V_1,\ldots,V_m\}$.
Let $V',V''$ denote the subspaces spanned by these orbits, with
$\dim V''=\gcd(2,p)$. Then $u$ acts with a single Jordan block on each of them.
\par
Consider first the case that $X^\circ$ acts non-trivially on $V'$. Then, since
$u$ has a single Jordan block on $V'$ we obtain by Theorem~\ref{thm:SLn} that
$V'$ is an irreducible $X$-module. Then either $V'= W$ or $W\cap V'=0$. In the
first case $W^\perp/W$ must be isomorphic to $V''$ (as $W\ne W^\perp$),
contradicting that $\dim W^\perp/W\ge6$. If on the other hand $W\cap V'=0$ then
$V'$ is isomorphic to a submodule of $W^\perp/W$. Thus the orthogonal group
$\SO(W^\perp/W)$ of dimension $n-\dim W$ contains a unipotent element with a
Jordan block of size at least $n-\gcd(2,p)$. By the knowledge of possible
Jordan block shapes (see \cite[Lemma~6.2]{LS12}) this is not possible as
$\dim W\ge1$.
\par
Finally, in Case~2 it remains to discuss the situation where $X^\circ$ acts
trivially on~$V'$. Then it must act irreducibly and faithfully on $V''$ and
hence $X^\circ=A_1$ and $p=2$. In particular, $u$ acts by an inner automorphism
on $X^\circ$ and thus $X^\circ$ contains a regular unipotent element of
$\SO(V)$ by Lemma~\ref{lem:reg inner}, which is impossible by order
considerations.
\end{proof}

\subsection{Proof of Theorem~\ref{thm:main} for $\SO(V)$, $\dim V=2l$}

\begin{thm}   \label{thm:Dl}
 Let $X=X^\circ\langle u\rangle\le G:=\SO(V)$, where $\dim V=2l\ge8$, be a
 reductive subgroup, with $u$ a regular unipotent element of $G$. Assume that
 $[X^\circ,X^\circ]\ne1$. Then $X$ does not lie in any proper parabolic
 subgroup of $G$.
\end{thm}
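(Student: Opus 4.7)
The plan is to argue by contradiction: suppose $X$ lies in some proper parabolic subgroup $P$ of $G=\SO(V)$. By Remarks~\ref{rem:char 0} and~\ref{rem:transitive} we may assume $p>0$ and that $u$ permutes the simple components of $[X^\circ,X^\circ]$ transitively; since it suffices to show $[X^\circ,X^\circ]\langle u\rangle$ is not contained in $P$, we may replace $X^\circ$ by $[X^\circ,X^\circ]$ and take $X^\circ$ to be a nontrivial semisimple group. Let $W$ be an $X$-invariant totally singular subspace of $V$ of maximal dimension (necessarily non-zero since $X\le P$). Then Proposition~\ref{prop:orthonew2} leaves exactly two mutually exclusive possibilities: (1) $W$ is maximal totally singular, $X$ acts irreducibly on $W$, and $u$ has a single Jordan block on $W$; or (2) $0<W<W^\perp$ with $\dim W^\perp/W\ge 6$, and $X^\circ$ acts nontrivially on $W^\perp/W$ but is not completely reducible on $W^\perp$. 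It remains to eliminate both cases.

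For case~(1), consider the parabolic $P_W=N_G(W)$ with Levi complement $L\cong\GL(W)$ acting on $V\cong W\oplus W^*$, and the natural projection $\pi\colon P_W\to L$. By Lemma~\ref{lem:reg in Levi} the element $\pi(u)$ is regular unipotent in $L$, so has a single Jordan block on $W$, and applying Proposition~\ref{prop:irr new} to $\pi(X)\le\SL(W)$ classifies the structure of $\pi(X^\circ)$. Since a regular unipotent element $u$ of $\SO(V)$ has Jordan blocks of sizes $2l-1,1$ or $2l-2,2$ on $V$ while $\dim W=l$, the element $u$ cannot lie in the Levi $L$; hence the $X$-action on $V$ realises a nontrivial class in $\Ext^1_{X^\circ}(W^*,W)$ that is stabilised by $u$. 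The required Jordan block shape of $u$ on $V$ would then force an extension of the relevant irreducible $X^\circ$-modules that is ruled out by the $\Ext$-vanishing analysis in Propositions~\ref{prop:ext full} and~\ref{prop:3.4(2)}, together with order comparisons based on Table~\ref{tab:SS97} and Lemmas~\ref{lem:innercent} and~\ref{lem:unip centr} as in the proof of Theorem~\ref{thm:SLn}.

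Case~(2) is expected to be the principal obstacle. Here the image $\bar u$ of $u$ in $\SO(W^\perp/W)$ is regular unipotent by Lemma~\ref{lem:reg in Levi}, and Proposition~\ref{prop:orthonew1} (with Proposition~\ref{prop:TZ} discarding the totally-singular-stabiliser subcases arising in the quotient) pins down the possibilities for the action of $X^\circ$ on $W^\perp/W$, using that $\dim W^\perp/W<2l$ permits an inductive appeal. The failure of complete reducibility of $W^\perp$ yields a nontrivial class in $\Ext^1_{X^\circ}(W^\perp/W,W)$ stable under $u$. Decomposing $W^\perp/W$ and $W$ into $X^\circ$-isotypic components via Lemmas~\ref{lem:(*)} and~\ref{lem:ext fact}, invoking the cohomological restrictions in Propositions~\ref{prop:ext full} and~\ref{prop:3.4(2)}, and using the explicit structural description of Proposition~\ref{prop:4.4(4)}, one aims to show that no such extension is compatible with the known Jordan block shape of $u$ on $V$. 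The main technical difficulty will be to juggle the Frobenius twists on the tensor factors and the cyclic permutation action of $u$ on the simple components of $X^\circ$ while computing the relevant Ext groups, and to merge these with the order comparisons for unipotent elements.
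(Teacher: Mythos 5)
Your reduction and case division are the right skeleton --- replacing $X^\circ$ by $[X^\circ,X^\circ]$, choosing $W$ totally singular $X$-invariant of maximal dimension, and splitting into the two alternatives of Proposition~\ref{prop:orthonew2} is exactly how the argument must begin. But neither case is actually carried out, and in case~(1) the tools you cite do not apply. Propositions~\ref{prop:ext full} and~\ref{prop:3.4(2)} are statements about a unipotent element acting with a \emph{single} Jordan block on the ambient space; here $u$ has Jordan blocks of sizes $2l-1,1$ (or $2l-2,2$ when $p=2$) on $V$, so they say nothing directly about the extension of $W$ by $V/W\cong W^*$. The argument that is really needed runs differently: one first shows (for $p=2$) that $l$ must be odd, by producing a power of $u$ that centralises $X^\circ$ and comparing the $X$-composition factor dimensions on $V$ with the $\GL_a\GL_b$ structure of the centraliser of an involution $J_2^a\oplus J_1^b$ in $\SL(V)$; this kills the possibility $m=2r$ and forces $r=1$, so that $V$ is an extension of a single irreducible $X^\circ$-module by its dual. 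One then rules out self-duality of $W$ (otherwise $V$ would be homogeneous or would split compatibly with the Jordan shape), which by Table~\ref{tab:SS97} pins $X^\circ$ down to $A_l$ on a twist of the natural module, and the contradiction comes from the specific vanishing $\Ext^1_{A_l}(\mathrm{nat},\mathrm{nat}^*)=0$ of McNinch --- not from Proposition~\ref{prop:ext full}. None of these steps is in your write-up.

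Case~(2) is where the bulk of the proof lives, and your proposal explicitly defers it (``one aims to show\dots'', ``the main technical difficulty will be\dots''). The published argument splits further according to whether $X^\circ$ acts faithfully or trivially on $W$, and then according to the four configurations of Proposition~\ref{prop:orthonew1} for the action on $W^\perp/W$. The faithful case with $p=2$ requires a delicate comparison of the orders of $u$ on $W$, $W^\perp/W$ and $W^\perp$ (showing the order doubles on $W^\perp$), another involution-centraliser refinement argument, and fixed-point counting on sums of preimages $\tV_1+\tV_2$; the trivial case reduces to $\dim W=1$ and then handles configurations (1)--(4) of Proposition~\ref{prop:orthonew1} one by one, with configuration~(4) needing the structural description of Proposition~\ref{prop:4.4(4)}, an analysis of $u^2$ inside the stabiliser $B_{l-1}$ of a non-singular $1$-space, and a Jordan-block computation ruled by \cite[Thm.~6.6]{LS12}. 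An $\Ext$-vanishing computation of the kind you sketch is not by itself enough here, precisely because $W^\perp$ is \emph{not} completely reducible in this case by hypothesis --- the contradiction must come from the interplay between the admissible extensions and the Jordan block sizes of powers of $u$, which is the content you have left unwritten.
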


\begin{proof}
It suffices to prove the claim for $[X^\circ,X^\circ]\langle u\rangle$, and
hence we may and will assume that $X^\circ=[X^\circ,X^\circ]$ is semisimple.
Moreover, by Remark~\ref{rem:transitive} we may assume that $X^\circ$ is the
product over a single $u$-orbit of simple components.
Assume that $X$ lies in a proper parabolic subgroup $P$ of $G=\SO(V)$.
Then there is an $X$-invariant flag $0<W\le W^\perp<V$ with $W$ totally singular
and dual to $V/W^\perp$ as an $X$-module, and $W^\perp/W$ non-degenerate. We
choose $P$ such that $\dim W$ is maximal possible (and so $\dim W^\perp/W$ is
minimal). Hence, we are in the setting of Proposition~\ref{prop:orthonew2}.
\par
Consider first the case~(1). That is, $X$ is contained in the stabiliser of a
maximal totally singular subspace $W=W^\perp$ and acts as an irreducible
subgroup of $\GL(W)$ on $W$ and (hence) on its dual $V/W$. Note that
$X^\circ$ acts faithfully on $W$.   \par
Let us first consider the case when $p=2$. Assume that $u$ acts with different
orders on $W$ and on $V$. Then a non-trivial power $v$ of $u$, which we may
take to be an involution, is in the kernel of the representation on $W$. Letting
$\vhi_1:P\to\GL(W)$ denote the projection into the Levi factor of $P$, we thus
have $[v,X^\circ]$ lies in the normal subgroup $X^\circ$, and has image
$\vhi_1([v,X^\circ])=[\vhi_1(v),\vhi_1(X^\circ)]=1$, so in fact $[v,X^\circ]=1$,
that is, $v$ centralises $X^\circ$. As $u^2$ has two Jordan blocks $J_1$,
the element $v$ has Jordan blocks $J_2^a\oplus J_1^b$ with $a\ge1$, $b\ge2$ on
$V$. (Here $J_i$ denotes a Jordan block of size $i$.) Then by
\cite[Thm.~3.1]{LS12},setting $C = C_{\SL(V)}(v)$, we have that
$C/R_u(C)\cong \GL_a\GL_b$ and the action of this group on $V$ is via two
copies of the natural module for $\GL_a$ and one copy of the natural module for
$\GL_b$. As $X\subseteq C$, the dimensions of the $X$-composition factors
on $V$ (two factors of dimension~$l$) must be obtained as a refinement of the
dimensions $a,a,b$ coming from
the action of $\GL_a\GL_b$, which is not possible. So $u$ has the same order on
$W$ as on $V$. As $u$ has a Jordan block of size $l$ on $W$, and blocks of sizes
$2l-2,2$ on $V$, we must have $l=2^f+1\ge5$. So we have proved that $l$ is odd
when $p=2$.   \par
Now let $p$ be arbitrary again and write
$W|_{X^\circ}=W_1\oplus\cdots\oplus W_r$ for the decomposition of $W$ into
pairwise non-isomorphic irreducible $X^\circ\langle u^r\rangle$-modules as in
Remark~\ref{rem:prop irr}, with $u$ transitively permuting the summands. Note
that since $l=\dim W$ has to be odd when $p=2$ we cannot have $m=2r$ in
the notation of Proposition~\ref{prop:irr new}. As $W,V/W$ are dual to each
other, then $(V/W)|_{X^\circ}\cong W_1^*\oplus\cdots\oplus W_r^*$ with $W_i^*$
dual to $W_i$. Let $V_1$ be the $X^\circ\langle u^r\rangle$-submodule of $V$
with composition factors $W_1$ and $W_1^*$ (if $V$ is not completely reducible
as an $X^\circ\langle u^r\rangle$-module, this exists by Lemma~\ref{lem:(*)}).
Then the transitive action of $u$ yields
$V|_{X^\circ}=\bigoplus_{i=1}^r u^{i-1}(V_1)$. From the block structure of $u$,
as in the proof of Proposition~\ref{prop:orthonew1} this implies that $r=1$
or $r=p=2$. But the latter cannot occur as $l$ is odd when $p=2$. So $r=1$,
and $V_1=V$ is an extension of an irreducible $X^\circ$-module by its dual, on
both of which $u^r=u$ has a single Jordan block. Assume that $V$ is completely
reducible. If $W\cong W^*$ is self-dual, so $V$ is homogenous, then it is
an irreducible $X$-module by Lemma~\ref{lem:tensor}, a contradiction. Else, the
decomposition $V=W\oplus W^*$ is $X$-invariant, contrary to the block structure
of $u$.  \par
So $V$ is a non-trivial extension, and since by Lemma~\ref{lem:ext}(a) there
are no self-extensions for simple groups, $W$ cannot be a self-dual
$X^\circ$-module.
Moreover, still by Proposition~\ref{prop:irr new}, $X^\circ$ is now either
simple or $X^\circ=A_1^p$ with $p\le3$. The second possibility is ruled out as
$W$ is not self-dual. The possible non-self-dual $W$ for $X^\circ$ simple are
listed in Table~\ref{tab:SS97}, and we find that $X^\circ=A_l$ with $l\ge2$.
But according to \cite[Cor.~1.1.1]{McN}, there is no non-trivial extension of
a twist of the natural module of $A_l$ with its dual.
\par
Thus we are in case~(2) of Proposition~\ref{prop:orthonew2}. By the choice of
$W$ and since $\dim W^\perp/W\ge6$, we have that $X$ acts as in (1)--(4) of
Proposition~\ref{prop:orthonew1} on $W^\perp/W$. Also, by
Lemma~\ref{lem:reg in Levi}, $u$ acts with a single Jordan block on $W$ as well
as on $V/W^\perp$, and with two Jordan blocks on $W^\perp/W$. 

\par\smallskip\noindent
{\bf Case 1:} First assume that $X^\circ$ acts non-trivially and hence
faithfully on $W$ and thus that $\dim W\ge2$. By
Proposition~\ref{prop:orthonew2}(2), $X^\circ$ acts non-trivially and hence
faithfully on $W^\perp/W$ as well, which has dimension at least~6.
\par\noindent
{\bf Case 1a:} We first discuss the case where $p=2$. Let $\vhi_i$, $i=1,2$, be
the projections of $P$ into the two factors of the Levi subgroup $\GL(W)$,
$\SO(W^\perp/W)$ respectively. Since $\vhi_1(X)$ is irreducible on $W$ by
Theorem~\ref{thm:SLn}, it
cannot lie in a proper parabolic subgroup of $\GL(W)$, and by the choice of
$W$, neither is $\vhi_2(X)$ contained in a proper parabolic subgroup of
$\SO(W^\perp/W)$. Write $m=\dim W$ and so $\dim W^\perp/W=2(l-m)$.
We know that $\vhi_1(u)$ is a single Jordan block, and $\vhi_2(u)$ has a block
of size $2(l-m)-2$ and one of size 2, by Lemma~\ref{lem:reg in Levi}. Since
$m<l$ by assumption, $\vhi_1(u)$ has order smaller than $u$, so some power
$u^s\ne1$ with $s>1$ lies in $\ker\vhi_1$; we choose $s$ minimal with this
property. As before, we see that $u^s$ must centralise $X^\circ$.
But then $\vhi_2(u^s)=1$ as well, as otherwise $\vhi_2(u^s)$ is a non-trivial
unipotent element of $\SO(W^\perp/W)$ centralised by $\vhi_2(X)$, whence by
Borel--Tits, $\vhi_2(X)$ lies in a proper parabolic subgroup of
$\SO(W^\perp/W)$, which is
not the case. Note that no smaller power of $u$ lies in $\ker\vhi_2$ as
otherwise that element would (as before) centralise $X^\circ$, forcing
$\vhi_1(X)$ to lie in a proper parabolic of $\GL(W)$, again a contradiction.
So $\vhi_1(u),\vhi_2(u)$ have the same order~$s$.
\par
Recall that $u$ has a single block of size $m$ on $W$ and blocks of sizes
$2l-2m-2$ and $2$ on $W^\perp/W$. Also, $u$ has two blocks on $W^\perp$, one of
size~1 or~2. But the first possibility is ruled out as
$(u-1)^{2l-2m-2} W^\perp\subseteq W$ and so $(u-1)^m(u-1)^{2l-2m-2}W^\perp=0$.

We now show that $u$ has order $2s$ on $W^\perp$, that is, order twice as large
as its order on $W$ (and on $W^\perp/W$). Let $f$ be minimal such that
$2^f\geq m$, so $f$ is also minimal so that $2^f\geq 2l-2m-2$.
Hence we have $2m>2^f\geq m$ and $4l-4m-4>2^f\geq 2l-2m-2$. In particular,
$2^{f+1}\geq 2l-m-2$. Since also $2^f=2^{f-1}+2^{f-1}<m+2l-2m-2=2l-m-2$, the
order of $u$ in its action on $W^\perp$ is $2s$. In particular, $u^s$ acts as
an involution
$J_2^a\oplus J_1^b$ on $W^\perp$, with $a\geq 1$ and $b\geq 2$ (as $s>1$).
As before, the centraliser of $u^s$ in $\SL(W^\perp)$ has composition factors of
dimensions $a,a,b$ on $V$, and as $u^s$ centralises $X$, the $X$-composition
factors on $W^\perp$ must be obtained as a refinement of this.

Note that since $a\geq 1$ and $b\geq 2$, there are at least three composition
factors and so $X$ cannot act irreducibly on $W^\perp/W$, ruling out
configurations~(1) and~(2) of Proposition~\ref{prop:orthonew1}. Considering the
cases in~(3) and~(4), we see that $X$ has composition factor dimensions
on $W^\perp$ among
$$\{m, 2l-2m-2,2\}\mbox{ and } \{m,2l-2m-2,1,1\}.$$
As $l\ge8$ and $2l-2m\ge6$, the first case yields $a = m=2l-2m-2$ and $b=2$.
In the second case, the natural module for $\GL_a$ must remain irreducible for
$X$, as else there are five composition factors, and so either $a=1$, or
$a=m=2l-2m-2$ and $b=2$.

If $a=1$, then $u^s$ has one block of size~2 and the remaining blocks of size~1
on $W^\perp$. By Lemma~\ref{lem:power} this can only happen if $2l-m-2=2^c+1$
for some $c$, so $u$ has order $2^{c+1}$ on $W^\perp$ and by the previous
analysis, its order on $W$ and on $W^\perp/W$ is $2^c$ (so $c=f$ as above).
But this implies $2^{c-1}<m<2^{c-1}+1$, a contradiction.

Hence we have $b=2$ and $a=m=2l-2m-2$ and $u^s$ has exactly two blocks of size~1
on $W^\perp$ (coming from the block $J_2$ of $u$ on $W^\perp$), while the
one block of size $2l-m-2$ produces only blocks of size~$2$ for $u^s$. So
$2l-m-2$ is a power of $2$, say $2l-m-2=2^c$. Thus, $u$ has order $2^c$ on
$W^\perp$ and the order of $u$ on $W$ is $2^{c-1}$, so $2^{c-1}\geq m$ and
$2^{c-1}\geq 2l-2m-2$, forcing $m=2^{c-1} = 2l-2m-2$.

We claim that neither of the possible actions of $X$ on $W^\perp/W$ (as in
Proposition~\ref{prop:orthonew1}(3) and~(4)) is consistent with this.
First suppose we have the configuration of Proposition~\ref{prop:orthonew1}(3),
where $W^\perp/W = V_1\oplus V_2$, $X$ is irreducible on $V_1$ and $\dim V_2=2$.
Let $\tV_i$ be the $X$-submodules of $W^\perp$ such that $\tV_i/W = V_i$,
$i=1,2$. On each of these $X$ acts reducibly and so $u$ has at least two Jordan
blocks. Hence we have $\dim \tV_i^u\ge2$ and $\tV_1\cap \tV_2=W$, whence
$\dim(\tV_1^u\cap \tV_2^u)=\dim W^u=1$ as $u$ has a single Jordan block on $W$,
which gives $\dim(\tV_1+\tV_2)^u\ge3$, contradiction.

So finally we are left to consider the case where $X$ acts on $W^\perp/W$ as in
Proposition~\ref{prop:orthonew1}(4). Here we have
$0\le W\le V_1\le V_1^\perp\le W^\perp$, with $\dim W = m$,
$\dim V_1/W = 1 = \dim W^\perp/V_1^\perp$ and $\dim V_1^\perp/V_1 = m$. Recall
that $u$ has order $2m=2^c$ on $W^\perp$ and thus the same order on the
codimension~1 subspace $V_1^\perp$, which is again twice the order of $u$ on
$W$ and on $W^\perp/W$. So again $u^s$ acts as an
involution on $V_1^\perp$ and the $X$-composition factors are of dimensions
$m,1,m$. This is only consistent with the $\GL_a\GL_b$ analysis if $b=1$. This
is the final contradiction settling Case~1a.
\par\noindent
{\bf Case 1b:} So now we have $p>2$. As $\dim(W^\perp/W)\ge6$, we may apply
Proposition~\ref{prop:orthonew1} to the action of $X$ on $W^\perp/W$, and as
$p>2$ we are in either case~(1) or~(3). If we are in case~(1), $X^\circ$ acts
as $B_3$ on $W^\perp$, so $u$ acts by an inner automorphism on a component of
$X^\circ$ and we are done by Lemma~\ref{lem:reg inner} and \cite{TZ13}.
\par
In case~(3) of Proposition~\ref{prop:orthonew1}, we have
$W^\perp/W=V_1\oplus V_2$, with preimages $\tV_1,\tV_2$ in $W^\perp$.
Now $X^\circ$ acts non-trivially on both $\tV_i$, normalised by $u$, so
$u$ has two Jordan blocks on each $\tV_i$, by Theorem~\ref{thm:SLn}.
Counting fixed points on $\tV_1+\tV_2$ as in Case~1a, we reach a
contradiction.

\par\smallskip\noindent
{\bf Case 2:} Now assume that $X^\circ$ acts trivially on $W$. Let $W_0$ be the
$u$-invariant subspace of $W$ of codimension~1. Note that $W_0^\perp/W_0$ is
non-degenerate and $u$ acts as a regular unipotent element of
$\SO(W_0^\perp/W_0)$ by Lemma~\ref{lem:reg in Levi} and the image of $X$ lies
in a proper parabolic subgroup of this orthogonal group. So it suffices to
derive a contradiction in that situation, whence henceforth we assume
$\dim W=1$. Again, as $\dim(W^\perp/W)\ge6$, we may apply the conclusion of
Proposition~\ref{prop:orthonew1} to the image of $X$ in
$\SO(W^\perp/W)=\SO_{2l-2}$ (using our assumption that $\dim W=1$).
\par\noindent
{\bf Case 2a:} In case~(1) of Proposition~\ref{prop:orthonew1} again we have
$X=X^\circ$ as in Case~1b, a situation that was handled in \cite{TZ13}.
\par\noindent
{\bf Case 2b:} In case~(2) of Proposition~\ref{prop:orthonew1} we have
$X^\circ=A_{l-2}$, with $l\ge4$ as $\dim(W^\perp/W)\ge6$. Note that $V$ is then
a completely reducible $X^\circ$-module
since there are no extensions between the natural and the trivial module for
$A_{l-2}$, so $W^\perp/W$ is isomorphic to an $X^\circ$-submodule $M$ of
$W^\perp$. Assume that $M\cap M^\perp\ne0$. Then, by dimension
reasons, this intersection must be one of the two non-isomorphic irreducible
$X^\circ$-summands. But then $M/(M\cap M^\perp)$ has a non-degenerate
$X^\circ$-invariant form and thus is a self-dual $A_{l-2}$-module, which it is
not. Thus, $M$ is a non-degenerate $X^\circ$-submodule of $V$, and $M^\perp$ is
its 2-dimensional orthogonal complement. Since $M,M^\perp$ are both sums of
homogeneous $X^\circ$-components of $V$, the decomposition $V=M\perp M^\perp$ is
$u$-invariant, making $M$ an irreducible $X$-module. Thus $W\le M^\perp$ is a
1-dimensional totally singular subspace, but the 1-dimensional fixed points of
unipotent elements of $\GO(M^\perp)$ are non-singular.
\par\noindent
{\bf Case 2c:} In case~(3) of Proposition~\ref{prop:orthonew1}, we have an
$X$-stable decomposition $W^\perp/W=V_1\oplus V_2$ with $X$ irreducible on
$V_1$. Write $\tV_i$ for the full preimage of $V_i$ in $W^\perp$, $i=1,2$, both
$X$-submodules of $V$. By Theorem~\ref{thm:SLn}, as $\tV_1$ is reducible
for $X$, we deduce that $u$ has two blocks on $\tV_1$. If $u$ has more than one
Jordan block on $\tV_2$ as well, then counting fixed points as in Case~1a we
obtain a contradiction. So $u$ has a single Jordan block on $\tV_2$,
whence $\tV_2^u=W$ and $(W^\perp)^u=\tV_1^u$.
\par
Now first assume that $p\ne2$ so that $\dim V_2=1$. Using that
$W^\perp=\tV_1+\tV_2$ we have
$W^\perp/(W^\perp)^u=W^\perp/\tV_1^u\cong \tV_1/\tV_1^u+\tV_2/W$,
and a dimension count then shows the sum is direct. Hence
$\dim (W^\perp/(W^\perp)^u)^u\ge2$, contradicting the Jordan block structure of
$u$ on~$V$. Similarly, when $p=2$ and hence $\dim V_2=2$, consider
$M:=\ker((u-1)^2|_{W^\perp})$. Then $\dim M\cap\tV_1\ge3$,
$\dim M\cap\tV_2=2$, and these two intersect in $W$.
By assumption, $u$ has one fixed point on $V/\ker((u-1)^2)$, but on
$W^\perp/M=(\tV_1+M)/M\oplus (\tV_2+M)/M$ it has a two-dimensional
fixed point space, giving a contradiction.
\par\noindent
{\bf Case 2d:} In case~(4) of Proposition~\ref{prop:orthonew1}, we have $p=2$
and $W^\perp/W$ has $X$-invariant subspaces $0< \bar V_1<\bar V_1^\perp$ with
$\bar V_1$ non-singular of dimension~1, such that $u$ acts with
one Jordan block on $\bar V_1^\perp/\bar V_1$. By Proposition~\ref{prop:4.4(4)}
we may decompose $X^\circ=X_1\cdots X_r$ into a product of simple groups all
isomorphic to either $B_m$, $C_m$ or $G_2$, with $\dim V=2l=2mr+4$, where we set
$m:=3$ in the case of $G_2$. Furthermore, $r$ is a power of~2, with $r\ge2$
since otherwise we are done by Lemma~\ref{lem:reg inner} and
Proposition~\ref{prop:TZ}. As $u$ has Jordan blocks of sizes $2mr+2,2$ on $V$,
$u^2$ has two Jordan blocks of size $mr+1$ and two of size~1.

Note that $v:=u^2$ acts trivially on the 2-dimensional full preimage $V_1$ of
$\bar V_1$ in $V$. By our
assumption, $V_1$ cannot be totally singular. Let $N\le V_1$ be a 1-dimensional
non-singular subspace. Then $X^\circ\langle v\rangle$ lies in the stabiliser
$N_{\SO(V)}(N)$ isomorphic to $B_{l-1}\cong C_{l-1}$, so in $\Sp(N^\perp/N)$.
We claim that $v$ has Jordan block sizes $mr+1,mr+1$ on $N^\perp/N$. Indeed,
as it has block sizes $mr+1,mr+1,1,1$ on $V$, the only other possibility would
be $mr,mr,1,1$ (note that all odd block sizes most occur an even number of
times). But by \cite[Thm.~6.6]{LS12} the centraliser of such an element has
reductive part of its centraliser containing an $\Sp_2$, while the reductive
part of the centraliser of $v$ in $\SO(V)$ is just a torus, a contradiction.

Now the $X^\circ\langle v^{r/2}\rangle$-composition factors of $N^\perp/N$ are
the $U_i$ and two trivial modules. Thus, by self-duality, $N^\perp/N$ has a
submodule $M$ of codimension~1, and this is the sum of submodules of dimension
at most $2m+1$ (namely either the $U_i$ or extensions of some $U_i$ by a trivial
module). But $v^{r/2}=u^r$ has block sizes $2m+1,2m+1,2m\,(r-2$ times$)$ on
$N/N^\perp$, so at least one block of size $2m+1$ on $M$. By
Theorem~\ref{thm:SLn}, this contradicts the fact that $M$ has no irreducible
$X^\circ\langle v^{r/2}\rangle$-submodules of that dimension.
\end{proof}

\section{Exceptional types}   \label{sec:exc}

In this section we consider algebraic groups defined over $k$ of
characteristic~$p>0$. See Remark~\ref{rem:char 0}.

We will make extensive use of the known data on unipotent elements in simple
algebraic groups of exceptional type, including element orders and power maps
given in \cite{La95} and structure of centralisers described in
\cite{LS12}. We follow the notation in \cite{La95} for the labelling of
unipotent classes. In particular, if the class of $u$ is denoted by some Dynkin
diagram, then $u$ is a regular element in a Levi subgroup of that Dynkin type.

In the course of our proof we will require precise knowledge on the existence
and conjugacy classes of complements to $R_u(C_G(x))$ in $C_G(x)^\circ$,
for certain unipotent elements $x$, as in the next result.

\begin{lem}   \label{lem:cent split}
 Let $x\in G$ be unipotent and let $Y\le C_G(x)$ be connected reductive, where
 $$(G,p,Y,[x])\in\big\{(E_7,3,A_1^3,4A_1),(E_8,2,A_2,E_6(a_1)),
   (E_8,2,A_2,E_6)\big\}.$$
   Then there exists a connected reductive group $C\leq C_G(x)$ such that
   $C_G(x)^\circ = R_u(C_G(x)).C$ and
   $Y$ lies in a conjugate of $C$.
\end{lem}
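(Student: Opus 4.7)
The plan is to treat the three listed configurations uniformly, relying on the explicit description of unipotent centralisers in exceptional groups given in \cite{LS12}. The argument splits into two parts: producing a reductive complement $C$ to $R_u(C_G(x))$ in $C_G(x)^\circ$, and then showing that a suitable conjugate of $Y$ lies inside $C$.

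For the existence of a reductive complement, I would consult the tables in Chapter~22 of \cite{LS12}, which in each of our three cases identify the isomorphism type of $C_G(x)^\circ$ together with its structure as a semidirect product of $R_u(C_G(x)^\circ)$ with a connected reductive group $C$. Although $p$ is a bad prime for $G$ in every case, for these particular unipotent classes a reductive complement $C$ does exist. Using the description of $C$ available there and straightforward dimension and type considerations, one then checks that $C$ contains a subgroup of the type of $Y$, i.e.\ of type $A_1^3$ or $A_2$, as required.

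To place a conjugate of $Y$ inside $C$, set $U:=R_u(C_G(x)^\circ)$ and note that since $Y$ is reductive, $Y\cap U=1$, so the quotient map $\pi\colon C_G(x)^\circ\twoheadrightarrow C_G(x)^\circ/U\cong C$ restricts to an isomorphism of $Y$ onto a closed connected reductive subgroup $\bar Y\le C$. The subgroup $U\bar Y\le C_G(x)^\circ$ is then an extension $1\to U\to U\bar Y\to\bar Y\to 1$, and the $U$-conjugacy classes of reductive complements to $U$ in $U\bar Y$ are classified by the pointed non-abelian cohomology set $H^1(\bar Y,U)$. The two splittings given by $Y$ and by $\bar Y$ itself are therefore $U$-conjugate provided $H^1(\bar Y,U)=0$.

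The main obstacle is verifying this cohomology vanishing in bad characteristic. Using the descending central series of $U$, one filters $U$ by $\bar Y$-stable normal subgroups with abelian quotients isomorphic to composition factors of the $\bar Y$-module $\operatorname{Lie} U$, and long exact sequences reduce the problem to showing $H^1(\bar Y,V)=0$ for each such composition factor $V$. The composition factors, with their highest weights, can be read off from the explicit root subgroup data of \cite{LS12}; since $\bar Y$ has small rank and the modules $V$ occurring are of small dimension, the required vanishing can then be checked by direct weight-theoretic arguments or by invoking the results of \cite{McN} on cohomology of small-dimensional modules for simple algebraic groups, completing the proof.
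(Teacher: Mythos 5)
Your overall strategy --- take a reductive complement $C$ from the Liebeck--Seitz data, map $Y$ isomorphically onto $\bar Y\le C$ via the quotient by $U=R_u(C_G(x))$, and then conjugate $Y$ onto $\bar Y$ by showing the non-abelian $H^1(\bar Y,U)$ vanishes, reduced to composition factors by a filtration --- is exactly what the paper does for the first two configurations: it cites \cite[17.6]{LS12} for the existence of the complement and \cite[Prop.~3.2.6]{Stew13} for the conjugacy of complements once $H^1(Y,\bar Q)=0$ for every $Y$-composition factor $\bar Q$ of $U$. One point you gloss over: to compute those composition factors you must first pin down the embedding of $\bar Y$ in $C$ and the $C$-module structure of $U$. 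The paper does this by noting that there is a unique class of $A_1^3$-subgroups of the long-root $C_3$ (and a unique class of complements to $U$ in $UC$, via \cite{LS96}), and by reading the $C$-composition factors of $U$ from \cite[Tab.~8.2]{LS96}; your appeal to ``explicit root subgroup data'' is where this work actually sits.

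The more serious issue is your final sentence, which presents the cohomology vanishing as a routine check. For $(E_8,2,A_2,E_6)$ the complement is $C=\bar G_2$, and $U$ has a $6$-dimensional $G_2$-composition factor $V$ with $H^1(G_2,V)\ne0$ in characteristic~$2$; this is precisely why the paper abandons the composition-factor argument in that case and instead embeds $C_G(x)$ in a $D_4$-parabolic $P=QL$, uses that the $8$-dimensional $D_4$-composition factors of $Q$ restrict to $G_2$ as indecomposable tilting modules $M$ with $H^1(G_2,M)=0$ (via \cite[Lemma 9.1.1]{LS04} and \cite[Prop.~\S E.1]{Ja03}), and deduces conjugacy of complements to $Q$ in $QY$. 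Your variant, which works with $\bar Y=A_2$ rather than with $C=G_2$, can most likely be pushed through directly --- the restriction $V|_{A_2}$ is the sum of the natural module and its dual, each with vanishing $H^1$ --- but you must say this explicitly and verify it for all composition factors; as written, the reader is given no reason to believe the ``direct weight-theoretic check'' succeeds, and at the level of $C$ itself it genuinely fails.
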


\begin{proof}
Throughout we write $R:=R_u(C_G(x))$. The existence of a complement to $R$ in
$C_G(x)^\circ$ follows from \cite[17.6]{LS12}. We now turn to the proof of the
remaining assertions.   \par
Consider first $G = E_7$, with $p=3$, $Y=A_1^3$, $x$ a unipotent element of type
$4A_1$
and $Y\leq C_G(x)^\circ= RC$, where $C$ is a long root $C_3$-subgroup of $G$
(see \cite[Tab.~22.1.2]{LS12}). By \cite[Thm.~5]{LS96} there exist two classes
of such $C_3$-subgroups in $G$, coming from the two non-conjugate $A_5$ Levi
factors of $G$. By \cite[Cor., p.2]{LS96}, there exists a unique class of
complements to $R$ in $RC$. For both classes of $C_3$-subgroups, each
non-trivial $C$-composition factor of $R$ occurs as a composition factor of
$\wedge^j(W)$, for some $1\leq j\leq 3$, where $W$ is the natural
$6$-dimensional $C$-module. (See \cite[Tab.~8.2]{LS96}.) There exists a unique
class of $A_1^3$-subgroups of $C_3$, and restricting each of the given
irreducible $C_3$-modules to such an $A_1^3$-subgroup we find that the
composition factors of $Y$ on $R$ have highest weights among $\omega_i$,
$i=1,2,3$ (the fundamental dominant weights of $A_1^3$),
$\omega_i+\omega_j$ for $1\leq i<j\leq 3$, $\omega_1+\omega_2+\omega_3$ and
the zero weight. Using \cite[II.2.14]{Ja03} we have
$H^1(Y,\bar Q)=0$ for all $Y$-composition factors $\bar Q$ of $R$ and hence by
\cite[Prop.~3.2.6]{Stew13}, there exists a unique class of complements to $R$
in $RY$. Thus there exists $g\in C_G(x)$ with $Y \leq C^g$ as claimed.  \par
In the other two cases, we have $G=E_8$, with $p=2$, $Y=A_2$, and $x$ is a
unipotent
element of type either $E_6(a_1)$ or $E_6$, and $C_G(x)^\circ = RC$, where
$C = \bar {A}_2$, respectively $\bar {G}_2$, long root subgroups. The action of
a long root $A_2$ on $R$ has composition factors the natural, dual or trivial
module, and so in case $C=\bar A_2$, we have a unique
class of complements to $R$ in $RY=RC$, allowing to conclude.

In the case $C=\bar G_2$ we must argue slightly differently because here there
is a $6$-dimensional $C$-composition factor $V$ of $R$ with  $H^1(G_2,V)\ne 0$.
Now $C_G(x)$ lies in a parabolic subgroup $P=QL$ of $G$ with $R\leq Q$.
Moreover, considering the labelled diagram of the class of~$x$ (see
\cite[Tab.~22.1.1]{LS12}) and applying \cite[Thm.~17.4]{LS12}, we see that we
may take $P$ to be a $D_4$-parabolic subgroup of $G$. There
exists a composition series of $Q$ as an $[L,L]$-module, all of whose terms are
$8$-dimensional $D_4$-modules and trivials. The subgroup $C$ is uniquely
determined up to conjugacy in $[L,L]$ and, by \cite[Lemma 9.1.1]{LS04},
each such irreducible upon restriction to $C$ is
the indecomposable tilting module $U$ with $\rad U/\Soc U$ the 6-dimensional
irreducible $C$-module. By \cite[Prop.~\S E.1]{Ja03},
$H^1(C,U)=0$. Hence, all complements to $C$ in $QC$ are conjugate and
by considering the action of $Y=A_2$ on $Q$, we have the same statement for
$Y$. Arguing as in the previous cases now yields the claim.
\end{proof}

\begin{thm}   \label{thm:exc}
 Let $G$ be a simple algebraic group of exceptional type and
 $X=X^\circ\langle u\rangle\le G$ a reductive subgroup with $u$ a regular
 unipotent element of $G$ and $[X^\circ,X^\circ]\ne1$. Then $X$ does not lie in
 a proper parabolic subgroup of $G$.
\end{thm}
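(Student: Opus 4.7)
The plan is to proceed by contradiction and reduce, via the preparatory lemmas, to a small number of configurations that can be ruled out using the Lawther--Liebeck--Seitz data on unipotent classes in exceptional groups. By Remark~\ref{rem:char 0} I may assume $p>0$, and by Remark~\ref{rem:transitive} I may assume $X = X^\circ\langle u\rangle$ with the simple components of $X^\circ$ permuted transitively by $u$. If $u\in X^\circ$ then $X^\circ$ is itself connected reductive, contains~$u$ and lies in a proper parabolic of $G$, contradicting \cite[Thm.~1.2]{TZ13}. So I may assume $u\notin X^\circ$, and since the index $|X:X^\circ|$ equals the number of factors permuted transitively (possibly multiplied by the order of an outer automorphism of a single factor), the index is at most~$6$ and divides one of $\{2,3,4,6\}$, so $p\in\{2,3\}$ in the configurations of interest.

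Assuming for contradiction that $X\le P=QL$ with $Q=R_u(P)$, Lemma~\ref{lem:reg in Levi} gives that the image $\bar u$ of $u$ in $L$ is regular unipotent in $L$, and in particular regular in every simple factor of $[L,L]$. The first step is to constrain the isomorphism type of a simple component~$X_1$ of $X^\circ$: since some power $u^r$ stabilises $X_1$ and acts by an inner automorphism (so $X_1\langle u^{rp}\rangle$ lies inside $X_1\cdot C_G(X_1)$), the order of $u$ on any faithful $G$-module $V$ (adjoint or minimal) must be comparable to the largest order of a unipotent element of $X_1$ inflated by at most $p^2$. Using the precise element orders and Jordan block data from \cite{La95}, together with the Saxl--Seitz list in Table~\ref{tab:SS97} and the small list of maximal reductive subgroups meeting the regular class from \cite{SS97}, this cuts the candidate pairs $(G, X^\circ)$ down to a handful; roughly speaking, $X_1$ is forced to be of type $A_1$, $A_2$, or small classical, and the orbit length $r\in\{2,3\}$.

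Now I would run through the surviving configurations. In each one, I push $X$ down into a Levi complement $L$ and compare. Using Borel--Tits and the fact that $\bar u$ remains regular in $L$, one shows $X$ projects into a subsystem subgroup of $G$ compatible with the type of $X_1$; in every case where this projection is faithful, the classification in the classical cases already proved (Theorems~\ref{thm:SLn} and~\ref{thm:Dl}) combined with TZ13 immediately yields a contradiction, because the image $\bar X$ would be a connected reductive subgroup of a classical subsystem containing its regular unipotent. The remaining cases are precisely those where $X^\circ$ cannot project faithfully, i.e.\ $X^\circ$ lifts non-trivially into a unipotent centraliser; here I plan to invoke Lemma~\ref{lem:cent split} in the three listed $(G,p,Y,[x])$ situations to conjugate $X^\circ$ into a reductive complement $C$ of $C_G(x)^\circ$. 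Then $X^\circ\le C$ is a connected reductive subgroup lying in a proper parabolic (coming from the labelled diagram of $[x]$) and still containing a regular unipotent of $G$, which again contradicts \cite[Thm.~1.2]{TZ13}.

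The main obstacle will be the bookkeeping for the configurations where $u$ permutes $r\ge 2$ components transitively and acts by an outer automorphism on each: here the estimate on $|u|$ via Lemma~\ref{lem:power} only just fails the order constraints of Lawther's tables, so one must combine Jordan-block information of $u^r$ on the $G$-composition factors of small modules with centraliser component structure in \cite{LS12} to decide existence. In particular, matching the unipotent classes whose centraliser contains an isogenous copy of the candidate $Y$ in Lemma~\ref{lem:cent split} (notably the $4A_1$ class in $E_7$ for $p=3$ and the $E_6, E_6(a_1)$ classes in $E_8$ for $p=2$) is exactly what forces the splitting argument to be the last rather than the first move, and I expect this to consume the bulk of the case analysis.
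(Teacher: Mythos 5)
Your overall plan shares some machinery with the paper's proof (element orders from \cite{La95}, centraliser structure from \cite{LS12}, the Saxl--Seitz lists, and Lemma~\ref{lem:cent split}), but two of your reduction steps contain genuine errors that would sink the argument. First, the claim that $|X:X^\circ|$ is at most~$6$, hence $p\in\{2,3\}$ and the orbit length $r\in\{2,3\}$, is false. The number of simple components permuted transitively by $u$ is a power of $p$ bounded only by $\rnk(G)\le 8$, so $r=5$ and $r=7$ occur as live candidates: the paper must explicitly exclude $\bar X=A_1^5.5$ inside $E_7$ and $E_8$ and $\bar X=A_1^7.7$ inside $E_8$, and these require dedicated arguments (the list of maximal reductive subgroups of $D_6$ from \cite[Thm.~B]{SS97}, and the complete-reducibility results of \cite{LT18} to place $A_1^7$ in an $E_7$ Levi factor, after which one computes $C_G(X^\circ)$ and appeals to \cite[Thm.~A]{SS97}). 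Your reduction discards these configurations without justification.

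Second, your closing contradiction does not close. After conjugating $X^\circ$ into a reductive complement $C\le C_G(x)$ via Lemma~\ref{lem:cent split}, you assert that $X^\circ$ is ``a connected reductive subgroup lying in a proper parabolic and still containing a regular unipotent of $G$,'' contradicting \cite[Thm.~1.2]{TZ13}. But $u\notin X^\circ$ --- that was disposed of at the very start --- so $X^\circ$ contains no regular unipotent element and \cite{TZ13} gives nothing. The paper uses Lemma~\ref{lem:cent split} for a different purpose: to pin down $X^\circ$ as a long-root subgroup (e.g.\ a long root $A_1^3$ in $E_7$ or a long root $A_2$ in $E_6$, $E_8$), whence $C_G(X^\circ)$ is large, $X^\circ C_G(X^\circ)$ has maximal semisimple rank, and its normaliser is a reductive maximal subgroup not appearing in \cite[Thm.~A]{SS97} --- that is where the contradiction actually comes from. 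Similarly, your intermediate claim that a faithful projection of $X$ into a classical subsystem ``immediately yields a contradiction'' via Theorems~\ref{thm:SLn} and~\ref{thm:Dl} is unsupported: irreducible disconnected reductive subgroups of classical groups containing regular unipotent elements do exist (Table~\ref{tab:SS97}), so landing in such a subsystem is not by itself contradictory. The paper's actual case division --- according to whether $u^p$ acts as an inner automorphism on $X^\circ$ and whether $X^\circ$ contains elements of order $|u^p|$, feeding Lemma~\ref{lem:innercent} and Lemma~\ref{lem:unip centr} into the centraliser tables --- is what makes the bookkeeping tractable, and you would need to rebuild something equivalent.
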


\begin{proof}
Let $X$ be as in the assertion and assume that $X$ lies in a proper parabolic
subgroup of $G$. Then we have $u\notin X^\circ$ by \cite[Thm.~1.2]{TZ13}. Also,
by passing to $[X^\circ,X^\circ]\langle u\rangle$ we may assume that $X^\circ$
is semisimple. We will need to consider the image of $X$ in Levi factors, and
for this
throughout we write $\bar X$ for the quotient of $X$ by its largest normal
unipotent subgroup (which, being finite, is centralised by $X^\circ$).
Note that $X^\circ$ maps isomorphically to a subgroup of $\bar X$ on
which the image of $\langle u\rangle$ then acts faithfully.

By Remark~\ref{rem:transitive}, we may moreover assume that $u$ has a single
orbit on the set of simple components of $X^\circ$, and, by
Lemma~\ref{lem:reg inner} it does not act as an inner automorphism on~$X^\circ$.
In particular, $\rnk(X^\circ)\ge p$.
On the other hand, as $X$ lies in a proper parabolic subgroup of $G$,
$\rnk(X^\circ)< \rnk(G)$. This already rules out the case $G=G_2$. Furthermore,
regular unipotent elements of $F_4$ are also regular in $E_6$ under the natural
embedding, and proper parabolic subgroups of $F_4$ lie in such of $E_6$, hence
it suffices to prove our result for $G=E_6,E_7$ or $E_8$. The orders of regular
unipotent elements in these groups for small primes are given in
Table~\ref{tab:order reg}.

\begin{table}[htb]
\caption{Orders of regular unipotent elements}   \label{tab:order reg}
$$\begin{array}{c|cccc}
 & p=2& 3& 5& 7\cr
\hline
 G_2&  8&  9\cr
 F_4& 16& 27\cr
 E_6& 16& 27& 25\cr
 E_7& 32& 27& 25& 49\cr
 E_8& 32& 81& 125& 49\cr
\end{array}$$
\end{table}

\smallskip\noindent
{\bf Case~1:} We first consider the case that $v:=u^p$ acts by an inner
automorphism on $X^\circ$ and $X^\circ$ does not contain elements of order
$|v|$, so in particular $|v|>p$.   \par
Then by Lemma~\ref{lem:innercent}, $X^\circ$ is centralised by a unipotent
element $v'$ of this order. We discuss this situation by comparing the list of
centralisers of unipotent elements \cite[\S22]{LS12} and the list of unipotent
element orders \cite[Tab.~5--9]{La95}.
\par
Let first $G=E_6$ and consider the case $p=2$, where we have $|v'|=|v|=8$ and
$v'$ centralises $X^\circ$. By \cite[Tab.~22.1.3]{LS12} and \cite[Tab.~5]{La95}
unipotent elements of order~8 centralising a group of semisimple rank at
least~2 lie in class $D_4$, with reductive part of the centraliser of type
$A_2$. Since $\rnk(X^\circ)\ge2$ we conclude $X^\circ=A_2$. Now $u^2$ acts as
an inner automorphism of $X$ and so $X$ centralises $u^8$, which lies in the
class $2A_1$, by \cite[Tab.~D]{La95}. Moreover, using \cite[Tab.~22.1.3]{LS12},
we see that the full connected centraliser of $u^8$ has a reductive complement
$C$ to $R = R_u(C_G(u^8))$, a $B_3$-subgroup of $G$ generated by long root
elements of $G$. Now $X^\circ\leq RC$ and we consider the possible embedding of
$X^\circ$ in $C\cong C_G(u^8)^\circ/R$. By \cite[Lemma~2.2]{LT04}, $X$ must lie
in a proper parabolic subgroup of $C$ and by rank considerations we find that
$X$ lies in an $A_2$-parabolic subgroup of $C$. As $C$ is generated by long
root subgroups of $G$, the Levi factor $A_2$ is also generated by long root
subgroups of $G$. Now arguing as in Lemma~\ref{lem:cent split}, we find that
$X^\circ$ is a long root $A_2$-subgroup of $G$, that is, a Levi factor of $G$.
(The $A_2$ Levi factor of $C$ acts on $R$ with composition factors the natural,
dual or trivial module for $A_2$.)
But now the centraliser of $X^\circ$ is an $A_2A_2$ and so $X$ normalizes an
$A_2^3$ subgroup of $G$. But there is no such example in \cite[Thm.~A]{SS97}.
For $p=3$ we have $|v'|=9$ and $\rnk(X^\circ)\ge3$,
and again by \cite[Tab.~22.1.3]{LS12} and \cite[Tab.~5]{La95} there is no
possibility. For $p=5$ we have $|v'|=5$, contrary to our assumption.
\par
For $G=E_7$ and $p=2$ we have $|v'|=16$ but all centralisers of such elements
have semisimple rank at most~1 by \cite[\S22]{LS12}. When $p=3$ and so $|v'|=9$,
the unipotent classes $A_3$, $(A_3+A_1)^{(1)}$, $(A_3+A_1)^{(2)}$, $D_4$ and
$D_4(a_1)$ need to be discussed. Here the semisimple parts of the centralisers
have type $B_3A_1$, $B_3$, $A_1^3$, $C_3$, $A_1^3$, respectively. As $X^\circ$
is contained in one of those, and $u$ has a single orbit on its set of simple
components, $X^\circ$ must be of type $A_1^3$. Now $X=A_1^3\langle u\rangle$
contains $u^3$ acting as an inner element on $A_1^3$, and so $u^9$ centralises
$A_1^3$. But by \cite[Tab.~D]{La95}, $u^9$ lies in class $4A_1$, with
semisimple part of its centraliser a $C_3$-subgroup generated by long root
subgroups, by \cite[\S22]{LS12}. By Lemma~\ref{lem:cent split}, $X^\circ$
must be a long root $A_1^3$. There are two classes
of Levi subgroups $A_1^3$ in $E_7$. Using Borel--de Siebenthal one sees that
one is centralised by an $A_1^4$, the other by a $D_4$. Thus in any case,
$X^\circ C_G(X^\circ)$ contains a subgroup of $E_7$ of maximal semisimple rank,
so the normaliser of $X^\circ$ and all of its overgroups are reductive.
But by \cite[Thm.~A]{SS97} there are no such subgroups containing a regular
unipotent element. For $p=5$ we have $|v'|=5$, contrary to our assumption.
\par
Finally, assume $G=E_8$. For $p=2$ we have $|v'|=16$. Only the 17 unipotent
classes
$$E_6(a_1),\,D_6,\,E_6,\,E_6(a_1)+A_1,\,E_7(a_3),\,E_8(b_6),
  \,D_7(a_1),\,D_7(a_1)^{(2)},\,E_6+A_1,$$
$$E_7(a_2),\,E_8(a_6),\,D_7,\,E_8(b_5),\,E_7(a_1),\,E_8(a_5),\,E_8(b_4),
  \,E_8(a_4)$$
of $G$ contain elements of order~16. Of these, only the first three cases have a
semisimple part of the centraliser of rank at least~2, of type $A_2$, $B_2$ and
$G_2$ respectively. So $X^\circ=A_2$ or $A_1^2$, with $u$ acting by the graph
automorphism. In the second case, $u^4$ centralises $X^\circ$ and lies in class
$D_7(a_2)$, but the latter has centraliser of rank~1. So in fact $X^\circ=A_2$.
By \cite[Tab.~22.1.1]{LS12}, the remaining possible subgroups $A_2$, $G_2$ of
$C_G(v')$ are
generated by long root subgroups, and by Lemma~\ref{lem:cent split}, $X^\circ$
is contained in one of them. Now all subgroups of type $A_2$ of these are again
generated by long root subgroups, hence so is $X^\circ$. Thus, the centraliser
of $X^\circ$ is of type $E_6$, and so $u$ acts on an $E_6A_2$, whose normaliser
is a maximal subgroup of $E_8$. But since this does not appear in
\cite[Thm.~A]{SS97}, its normaliser does not contain regular unipotent elements.
\par
If $p=3$ then $|v'|=27$, but no unipotent element of order bigger than~9 has a
centraliser of semisimple rank at least~3. If $p=5$ then $|v'|=25$, but none of
the seven unipotent classes having centraliser of semisimple rank at least~5
contains elements of order~25. Finally, for $p=7$ we have $|v'|=7$, which
is not allowed here.
\par
\smallskip\noindent
{\bf Case~2:} We next consider the case that $v:=u^p$ acts by an inner
automorphism on $X^\circ$, and that $X^\circ$ does contain an element of order
$|v|$.  \par
When $G=E_6$ and $p=2$, then $X^\circ$ is a semisimple subgroup with an element
of order~8 having a non-trivial graph automorphism transitively permuting the
simple factors. Therefore, $X^\circ$ is one of $A_4,A_5,D_4,D_5,G_2^2$. By
assumption $X$ lies in a proper parabolic subgroup of $G$ with Levi factor $L$,
and thus $L$ contains one of the above groups, with the image of $u$ inducing a
non-trivial graph automorphism of order~$2$.
By rank considerations, only $X^\circ=A_4$, $D_4$ and $G_2^2$ might occur. The
smallest faithful representation of $\bar X=A_4.2$ has dimension~10, so this
cannot occur inside~$A_5$. This representation embeds $A_4.2$ into $\GO_{10}$,
but not into $\SO_{10}$ by the block structures given in Lemma~\ref{lem:Jordan}.
Also, the smallest faithful representation of $G_2^2$ has dimension~$12$, too
large for any proper Levi subgroup. So in fact we must have $X^\circ=D_4$
inside a $D_5$-parabolic. Using \cite[3.2.6]{Stew13} one can check that this
embedding is into a Levi factor, and so $X^\circ=D_4$ is a Levi
subgroup of $G$. By \cite[Tab.~22.1.3]{LS12}, no non-trivial unipotent element
of $G$ has a $D_4$ in its centraliser, so $C_G(X^\circ)$ is a torus, and then in
fact it must be the centre $T_2$ of a Levi subgroup of type $D_4$. As $u$ acts
on $C_G(X^\circ)$, Proposition~\ref{prop:Aut T} implies that $u^4$ must
centralise $T_2$. But $u^4$ lies in the class $2A_2+A_1$ and has centraliser of
rank $1$ by \cite[Tab.~22.1.3]{LS12}, a contradiction.
\par
For $G=E_6$ and $p=3$ with $|v|=9$, we have that $X^\circ$ contains elements of
order~9 and
has an outer automorphism of order~3, so $\bar X=D_4.3$. But the smallest
faithful representation of $X$ has dimension~24, which is too large for
containment in any proper parabolic subgroup of $G$. For $p=5$ where
$|v|=5$ the only option is that $X^\circ=A_1^5$. But no proper parabolic
subgroup has a Levi factor containing a group $X$ with $\bar X=A_1^5.5$.  \par
For $G=E_7$ with $p=2$ the semisimple group $X^\circ$ has an element of order~16
and a non-trivial graph automorphism, whence $X^\circ\in\{D_6,E_6\}$. But
clearly no Levi factor of a proper parabolic subgroup of $E_7$ can contain
$X$ with $\bar X=D_6.2$ or $E_6.2$.
For $p=3$ with $|v|=9$ the only possibilities with a graph automorphism of
order~3 are $\bar X=D_4.3$, $B_2^3.3$ and $G_2^3.3$. All could only lie in a
proper parabolic subgroup of type $E_6$. But $E_6$ has no maximal rank subgroups
$B_2^3$ or $G_2^3$ by Borel--de Siebenthal. When $\bar X=D_4.3$,
\cite[Thm.~5]{LS96} shows that
$X^\circ$ is a Levi factor of $G$. Again by Borel--de Siebenthal there is a
subgroup $A_1^3$ centralising $X^\circ=D_4$, so $N_G(D_4)\ge D_4A_1^3$ is
reductive. But by \cite[Thm.~A]{SS97}, there is no positive-dimensional maximal
reductive subgroup of $G$ containing a regular unipotent element.
For $p=5$ again the only possibility is $X^\circ=A_1^5$. The only proper
parabolic subgroups whose Levi factor might contain $X$ with $\bar X=A_1^5.5$
are those
of type $D_6$. The list in \cite[Thm.~B]{SS97} shows that there is no maximal
reductive subgroup of $D_6$ containing a regular unipotent element of $D_6$ and
such an $A_1^5$.
\par
For $G=E_8$ and $p=2$ we have $|v|=16$. The only semisimple groups of rank at
most~7 with a unipotent element of order~16 and an even order graph automorphism
are $D_6$, $D_7$ and $E_6$. Now for $X^\circ=D_6$ or $D_7$ the element $v$ of
order~16 acts as an inner element $x$ of order~8. Thus $X^\circ$ is
centralised by the element $vx^{-1}$ of order~16, which is not possible. Assume
$X^\circ=E_6$ and $u$ induces a graph automorphism on $E_6$. There is only
one class of subgroups $E_6$ in $E_8$ by \cite[Thm.~5]{LS96} and hence $X^\circ$
is a Levi factor of $G$. Again, $u$ normalises the centraliser of
such an $E_6$, hence a subgroup $E_6A_2$, and as above this is not possible by
\cite[Thm.~A]{SS97}. If $p=3$ then $|v|=27$, and there is no possible case.
When $p=5$ or $p=7$, then $X^\circ$ must have at least~$p$ simple components, whence
$X^\circ=A_1^p$. Now for $p=5$, the group $A_1^5$ does not contain elements of order~25,
so we have $p=7$ and $X^\circ=A_1^7$. The only proper parabolic subgroup of $E_8$
with a Levi factor containing $X$ with $\bar X=A_1^7.7$ is of type $E_7$. By
\cite[Thm.~4 and Tab.~17 and~18]{LT18}, such an $A_1^7$ lies in a
Levi factor $E_7$. Now the centraliser of that $A_1^7$ contains the $A_1$
centralising the $E_7$-Levi subgroup. So again the normaliser of $X^\circ$ in
$E_8$ has maximal semisimple rank, and \cite[Thm.~A]{SS97} shows that this
cannot contain regular unipotent elements.
\par
\smallskip\noindent
{\bf Case~3:} Finally, consider the case that $u^p$ is not inner. Then either
$X^\circ$ has at least $p^2$ components, or there are $p$ components and on each
of them $u^p$ induces a graph automorphism of order~$p$. Either possibility
forces $\rnk(X^\circ)\ge p^2$, so $p=2$. Furthermore, $v:=u^4$ must act by an
inner automorphism on $X^\circ$.
When $G=E_6$ then the possibilities are $X^\circ=A_1^4$ or $A_2^2$. In the
first case, by Lemma~\ref{lem:innercent} there exists an element of order~4
centralising an $A_1^4$, which is not possible by \cite[\S22]{LS12}. In the case
$X^\circ=A_2^2$ and $u$ acts as an outer automorphism of order~4. The only Levi
factor possibly containing a subgroup $X$ with $\bar X=A_2^2.4$ is of type
$D_5$, but $A_2^2.4$ contains elements of order~16 while $D_5$ does not have
such elements. When $G=E_7$ then $|v|=8$; none of the groups $A_1^4$,
$A_2^2$ and $A_3^2$ contains elements of that order, so by
Lemma~\ref{lem:innercent} there is an element of $G$ of order~8 centralising
such a subgroup, which is not the case by \cite{La95} and \cite{LS12}. Finally,
when $G=E_8$ then again $|v|=8$, and the candidates for $X^\circ$ are $A_1^4$,
$A_2^2$ and $A_3^2$. Assume $\bar X=A_1^4.4$, then $u^4$ acts by an inner
automorphism
so $u^8$ centralises $X^\circ$. But $u^8$ lies in class $D_4(a_1)+A_2$ (there is
a misprint in \cite[Tab.~D]{La95}), and its centraliser does not contain an
$A_1^4$. Similarly, if $\bar X=A_2^2.4$ then $u^{16}$, in class $4A_1$,
centralises $X^\circ$, which is not possible. The same argument rules out
$\bar X=A_3^2.4$. This completes our case distinction and thus the proof.
\end{proof}

Theorem~\ref{thm:main} now follows by combining
Theorems~\ref{thm:SLn}, \ref{thm:Dl} and~\ref{thm:exc}.

\section{Regular unipotent elements in almost simple groups}   \label{sec:almost simple}
We now extend our main result to the case of regular unipotent elements in
cosets of simple groups in almost simple groups.

\begin{exmp}   \label{exmp:reg exc}
 The regular unipotent elements in a coset $xG^\circ\ne G^\circ$ of an almost
 simple group $G$ of ``exceptional type'' can be realized as follows:  \par
 (a) The group $D_4.3$ occurs as a subgroup of $F_4$ in a natural way (see e.g.
 \cite[Ex.~13.9]{MT}). Now according to \cite[Tab.~4]{La95} the only unipotent
 class of $F_4$ for $p=3$ containing elements of order~27 is the class of
 regular unipotent elements. Also, the regular unipotent elements in an outer
 coset of the disconnected group $D_4.3$ have order~27 (see
 \cite[Tab.~8]{Ma93}). Thus, they are regular unipotent elements of $F_4$.\par
 (b) Similarly, the disconnected group $E_6.2$ occurs inside the normaliser of a
 Levi subgroup of type $E_6$ inside $E_7$. Again by \cite[Tab.~7]{La95} the only
 unipotent class of $E_7$ for $p=2$ containing elements of order~32 is the class
 of regular unipotent elements, and since regular unipotent elements in the
 outer coset of $E_6.2$ have order~32 (see \cite[Tab.~10]{Ma93b}), they must be
 regular unipotent elements of $E_7$. 
\end{exmp}

We then obtain the following consequence of Theorem~\ref{thm:main}:

\begin{cor}   \label{cor:dis}
 Let $G$ be almost simple of type $A_l.2,D_l.2$ or $E_6.2$ with $p=2$, or
 of type $D_4.3$ with $p=3$, and $X=X^\circ\langle u\rangle\le G$ be a
 reductive subgroup with $u$ a regular unipotent element of $uG^\circ$ and
 $[X^\circ,X^\circ]\ne1$. Then $X$ does not lie in any proper subgroup $P$ of
 $G$ such that $P^\circ$ is a parabolic subgroup of $G^\circ$.
\end{cor}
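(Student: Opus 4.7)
The plan is to reduce the statement to Theorem~\ref{thm:main} by embedding $G$ into a simple algebraic group $\tilde G$ in which the regular unipotent elements of the coset $uG^\circ$ remain regular unipotent. In the exceptional cases I would use the natural embeddings $D_4.3\hookrightarrow F_4$ (in characteristic~3) and $E_6.2\hookrightarrow E_7$ (in characteristic~2); as recorded in Example~\ref{exmp:reg exc}, the regular unipotent elements of the non-trivial cosets become regular unipotent in $F_4$ and $E_7$ respectively. In the classical cases $G=A_l.2$ or $G=D_l.2$ with $p=2$, I would take $\tilde G$ to be the symplectic or special orthogonal group acting on the natural module for $G$ (as defined just before Lemma~\ref{lem:Jordan}); by comparing the Jordan block shapes in Lemma~\ref{lem:Jordan}(a), (d), (e) with those of regular unipotent elements of $\Sp_{2m}$ (a single block) and $\SO_{2m}$ (two blocks of sizes $2m-2,2$ in characteristic~2), one checks in each of the four subcases ($D_l.2$, $A_l.2$ with $l$ even, $A_l.2$ with $l$ odd, and so on) that $u$ is regular unipotent in $\tilde G$. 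Throughout, $X\le G\le\tilde G$ remains closed and reductive.

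Next I would argue by contradiction. Suppose $X\le P<G$ with $P^\circ$ a proper parabolic subgroup of $G^\circ$. Since the unipotent radical $R_u(P)$ is connected it lies in $P^\circ$, and conversely $R_u(P^\circ)$ is characteristic in $P^\circ$ hence normal in $P$, so $R_u(P)=R_u(P^\circ)\ne 1$. Because $X\le P\le N_{\tilde G}(R_u(P^\circ))$, the theorem of Borel--Tits applied inside $\tilde G$ produces a proper parabolic subgroup $Q$ of $\tilde G$ containing $N_{\tilde G}(R_u(P^\circ))$, and hence containing $X$.

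Finally, Theorem~\ref{thm:main} applies to the inclusion $X\le\tilde G$: the group $\tilde G$ is simple, $u\in X$ is regular unipotent in $\tilde G$ by the first paragraph, and $[X^\circ,X^\circ]\ne 1$ by hypothesis. The theorem then asserts that $X$ lies in no proper parabolic subgroup of $\tilde G$, contradicting $X\le Q$. The only real obstacle is the first step, namely checking that for each of the four families there is a simple overgroup $\tilde G$ under which regularity of $u$ in its coset is preserved; once this table of embeddings is in place, the Borel--Tits reduction together with Theorem~\ref{thm:main} finishes the proof without further calculation.
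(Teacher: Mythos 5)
Your proposal is correct and follows essentially the same route as the paper: embed $G$ into a simple group $H$ so that regular unipotent elements of the outer coset become regular unipotent in $H$ (via Lemma~\ref{lem:Jordan} and Example~\ref{exmp:reg exc}), then apply Borel--Tits to $N_H(R_u(P^\circ))$ and conclude with Theorem~\ref{thm:main}. The only (immaterial) difference is that for the single-Jordan-block cases $D_l.2$ and $A_l.2$ with $\dim V/2$ odd the paper embeds into $\SL(V)$, where a single block is automatically regular, whereas your choice of $\Sp(V)$ requires the small extra check that in characteristic~2 a unipotent element of $\Sp(V)$ with one Jordan block of size $\dim V$ is regular.
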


\begin{proof}
In each case, we embed $G$ in a simple algebraic group $H$; namely, $A_{2l}.2$
and $D_l.2$ embed into $H:=\SL(V)$ via their natural representation,
$G=A_{2l-1}.2$ embeds into $H:=D_{2l}$ (see remarks before
Lemma~\ref{lem:Jordan}), and $D_4.3$, $E_6.2$, embed into $H=F_4$, $E_7$
respectively under the embeddings given in Example~\ref{exmp:reg exc}.
Applying Lemma~\ref{lem:Jordan} and Example~\ref{exmp:reg exc}, we have that
the embedding sends regular unipotent elements in an outer coset of $G^\circ$
to regular unipotent elements of $H$.  Now, if $X$ lies in a proper subgroup
$P$ of $G$ with $P^\circ$ a parabolic subgroup of $G^\circ$ with
$Q=R_u(P^\circ)$, then $X\le N_H(Q)$, and by the Borel--Tits theorem, the
latter lies in a proper parabolic subgroup of $H$. Thus, in all cases our claim
for the almost simple group $G$ follows from Theorem~\ref{thm:main} for the
simple group $H$.
\end{proof}

Note that the type of subgroups $P$ allowed for in the preceding statement are
those given by the most general possible definition of ``parabolic subgroups of 
an almost simple group''.

\section{Regular unipotent elements in normalisers of tori}   \label{sec:tori}
Here, we show that if one removes the hypothesis that $[X^\circ,X^\circ]\ne1$ in
Theorem~\ref{thm:main}, the conclusion is no longer valid. More generally,
for a simple group $G$ we investigate the structure of torus normalisers in
$G$ that contain a regular unipotent element and lie in some proper
parabolic subgroup of $G$.

\subsection{Torus normalisers in $\SL(V)$}   \label{subsec:tori SL}

\begin{prop}   \label{prop:torus SL}
 Let $X=T\langle u\rangle\le\SL(V)$ where $T$ is a torus and $u$ is unipotent
 with a single Jordan block. Then all weight spaces of $T$ on $V$ have the same
 dimension $d$. Moreover $X$ is contained in a proper parabolic subgroup of
 $\SL(V)$ if and only if $d>1$.
\end{prop}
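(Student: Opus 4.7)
The approach is to exploit that $u$ normalises the connected component $X^\circ=T$, hence permutes the weight spaces of $T$ on $V$ by conjugation, and then to combine this with the hypothesis that $u$ has a single Jordan block.

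First I would show that all non-zero weight spaces have a common dimension~$d$ and are cyclically permuted. Since $u\in N_{\SL(V)}(T)$, conjugation by $u$ induces a permutation of the character lattice of $T$ and correspondingly of the weight space decomposition $V=\bigoplus_\lambda V_\lambda$; indeed, if $v\in V_\lambda$ then $u\cdot v$ lies in the weight space for the character $t\mapsto\lambda(u^{-1}tu)$. Grouping the non-zero weight spaces into $\langle u\rangle$-orbits yields an $X$-invariant direct sum decomposition of $V$. Since $u$ has a single Jordan block on $V$, it cannot preserve a non-trivial direct sum decomposition (each summand would contribute separately to the Jordan form), so there is a single $\langle u\rangle$-orbit on the set of non-zero weight spaces. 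As $u$ restricts to a linear isomorphism between the weight spaces it exchanges, all non-zero weight spaces have a common dimension~$d$ and are cyclically permuted, say $V=V_{\lambda_1}\oplus\cdots\oplus V_{\lambda_n}$ with $u(V_{\lambda_i})=V_{\lambda_{i+1}}$ (indices mod~$n$).

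For the equivalence, the easy direction is $d=1$. Here any $X$-invariant subspace $W\le V$ is $T$-invariant, hence of the form $\bigoplus_i(W\cap V_{\lambda_i})$, and since each $V_{\lambda_i}$ is one-dimensional, $W$ is a sum of weight spaces. Being $u$-invariant it must be a union of $\langle u\rangle$-orbits, and there is only one such orbit; hence $W\in\{0,V\}$, so $X$ is irreducible on $V$ and lies in no proper parabolic subgroup of $\SL(V)$.

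Conversely, if $d>1$, I would construct a proper non-zero $X$-invariant subspace of $V$ directly. Note that $u^n$ acts trivially on characters of $T$, so $u^n$ centralises $T$ and stabilises each $V_{\lambda_i}$. Since $k$ is algebraically closed and $\dim V_{\lambda_1}=d\ge2$, the linear operator $u^n|_{V_{\lambda_1}}$ admits a proper non-zero invariant subspace $W_1$ (for instance, a one-dimensional eigenspace). Setting $W_i:=u^{i-1}(W_1)\subseteq V_{\lambda_i}$ for $i=1,\ldots,n$ and $W:=\bigoplus_{i=1}^n W_i$, we get a $T$-invariant subspace which is $u$-invariant: indeed $u(W_i)=W_{i+1}$ for $i<n$ and $u(W_n)=u^n(W_1)=W_1$ by the $u^n$-stability of $W_1$. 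Since $0<\dim W=n\dim W_1<nd=\dim V$, $X$ is contained in the stabiliser of $W$, a proper parabolic subgroup of $\SL(V)$, completing the argument.

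The substantive step is the first paragraph: once the single Jordan block hypothesis is leveraged to force transitivity of $\langle u\rangle$ on the set of non-zero weight spaces, both the equidimensionality of weight spaces and the two implications in the second part reduce to elementary linear algebra, so no significant obstacle is expected.
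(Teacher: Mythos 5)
Your argument is correct. The first claim and the direction $d=1\Rightarrow$ irreducibility are handled exactly as in the paper: $u$ permutes the (non-zero) weight spaces, transitivity is forced by the single Jordan block, and equidimensionality follows. Where you genuinely diverge is the direction $d>1\Rightarrow X$ lies in a proper parabolic. The paper notes that the number $m$ of weight spaces is a $p$-power (as $u$ has $p$-power order), applies Lemma~\ref{lem:power} iteratively to conclude that $u^m$ acts as a single Jordan block of size $d$ on each weight space, hence $u^m\ne1$ is a non-trivial unipotent element centralised by $T$ (and by $u$), and then invokes Borel--Tits to place $X\le C_{\SL(V)}(u^m)$ inside a proper parabolic. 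You instead build a proper non-zero $X$-invariant subspace by hand: a proper non-zero $u^n$-invariant subspace $W_1$ of one weight space (which exists over an algebraically closed field since $d\ge2$), translated around the $u$-orbit and summed. Your route is more elementary and self-contained --- it needs neither Lemma~\ref{lem:power} nor Borel--Tits, nor the observation that $m$ is a $p$-power --- and it exhibits the invariant flag explicitly. The paper's route is shorter given the tools already in place and has the side benefit of showing that $X$ centralises a non-trivial unipotent element, which is precisely the mechanism reused in Proposition~\ref{prop:torus SL II} and the subsequent examples of Section~\ref{sec:tori} to produce such subgroups inside proper parabolics. (One cosmetic point: a ``one-dimensional eigenspace'' of the unipotent operator $u^n|_{V_{\lambda_1}}$ may not exist as stated, since its fixed space can have dimension $>1$ or equal $V_{\lambda_1}$; what you want is simply a line spanned by an eigenvector, or any line if $u^n$ acts trivially --- either way the required $W_1$ exists.)
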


\begin{proof}
Since $u$ normalises $T$, the weight spaces of $T$ on $V$ are permuted by $u$.
Moreover this action must be transitive as otherwise $u$ would have at least
two Jordan blocks on $V$. Thus, they all have the same dimension, and if they
are 1-dimensional, $V$ is an irreducible $X$-module and so $X$ does not lie in
any proper parabolic subgroup of $G$.  \par
Now assume the common dimension of the weight spaces is $d>1$ and set
$m=\dim(V)/d$. Since $u$ has $p$-power order, the number of weight spaces, $m$,
is a
$p$-power. It follows by Lemma~\ref{lem:power} that $u^m$ acts with a
single Jordan block (of size $d$) on each weight space. In particular, $T$
centralises $u^m\ne1$ and thus $X\le C_G(u^m)$ lies in a proper parabolic
subgroup of $G$ by the Borel--Tits theorem (\cite[Rem.~17.16]{MT}).
\end{proof}

Groups as in the previous result do in fact exist:

\begin{prop}   \label{prop:torus SL II}
 Let $n=p^ad$ be an integer, where $a>0$. There exists a $p^a-1$-dimensional
 torus $T\le\SL(V)$, where $\dim V=n$, with $d$-dimensional weight spaces
 on~$V$, normalised by a unipotent element $u\in\SL(V)$ with a single Jordan
 block. Moreover, $T\langle u\rangle$ lies in a proper parabolic subgroup of
 $\SL(V)$ if and only if $d>1$.
\end{prop}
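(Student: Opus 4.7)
Since the biconditional ``$T\langle u\rangle$ lies in a proper parabolic iff $d>1$'' is immediate from Proposition~\ref{prop:torus SL} once $T$ and $u$ are in hand, the real content is an explicit construction. Realise $V$ as $V = U \otimes U'$ with $\dim U = p^a$ and $\dim U' = d$, and take $T$ to be the image of the standard diagonal maximal torus of $\SL(U)$ under the natural embedding $\SL(U) \hookrightarrow \SL(V)$ acting on the first factor. Pick bases $e_0, \ldots, e_{p^a - 1}$ of $U$ and $f_0, \ldots, f_{d-1}$ of $U'$, set $e_{i,j} := e_i \otimes f_j$, and let $W_i = e_i \otimes U'$; then $T$ has dimension $p^a - 1$ with weight spaces $W_i$ of dimension $d$.

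Define $u \in \GL(V)$ by
\[
 u(e_{i,j}) = e_{i+1, j} \text{ for } i < p^a - 1, \quad u(e_{p^a - 1, j}) = e_{0, j} + e_{0, j+1} \text{ for } j < d-1, \quad u(e_{p^a - 1, d-1}) = e_{0, d-1}.
\]
Visibly $u(W_i) = W_{i+1 \bmod p^a}$, so $u$ normalises $T$. The remaining work is to verify that $u$ is unipotent with a single Jordan block of size $n$.

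I would do this in two stages. First, by direct iteration, $e_{0,j} \mapsto e_{1,j} \mapsto \cdots \mapsto e_{p^a - 1, j}$ under successive applications of $u$, and one more application gives $e_{0,j} + e_{0, j+1}$; hence $u^{p^a}$ stabilises $W_0$ and acts there by $e_{0,j} \mapsto e_{0,j} + e_{0,j+1}$ for $j < d-1$, fixing $e_{0, d-1}$, i.e.\ as a single Jordan block of size $d$. By the cyclic symmetry the same holds on every $W_i$. In particular $u^{p^a}$, and hence $u$, has $p$-power order, so $u - 1$ is nilpotent and $u$ is unipotent in $\SL(V)$. Second, solving $u(v)=v$ for $v = \sum c_{i,j} e_{i,j}$: the relations $c_{i-1,j} = c_{i,j}$ coming from the $i < p^a - 1$ case force $c_{i,j}$ to be independent of $i$, say equal to $c_j$; the wrap-around relations $c_{p^a - 1, j} + c_{p^a - 1, j-1} = c_{0,j}$ for $1 \le j \le d-2$ and the final relation at $j = d-1$ then force $c_0 = c_1 = \cdots = c_{d-2} = 0$, leaving the one-dimensional fixed space spanned by $\sum_i e_{i, d-1}$. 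Hence $\dim \ker(u-1) = 1$, and $u$ has exactly one Jordan block on $V$, necessarily of size $n$.

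The main obstacle, modest as it is, is the bookkeeping in these two computations; everything else is forced by the transitive action of $\langle u\rangle$ on the weight spaces and the cyclic symmetry of the construction. The last clause of the proposition is then an immediate application of Proposition~\ref{prop:torus SL} to the pair $(T, u)$ just constructed.
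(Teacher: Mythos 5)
Your construction is correct and is essentially the paper's: both decompose $V$ into $p^a$ blocks of dimension $d$ cyclically permuted by $u$, with a single Jordan block of size $d$ inserted at the wrap-around, and take $T$ to act by scalars on the blocks (your tensor-product packaging $U\otimes U'$ is just a reformulation of the paper's wreath-product setup). The only real difference is the final verification: the paper computes $u^{p^a}$ and deduces the single Jordan block from Lemma~\ref{lem:power}, whereas you additionally compute $\ker(u-1)$ directly, which is a perfectly valid (and slightly more self-contained) way to finish.
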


\begin{proof}
Decompose $V=V_1\oplus\cdots\oplus V_m$ into a direct sum of $m:=p^a$
subspaces of dimension~$d$. Let $x\in\SL(V)$ be the permutation matrix for
a permutation sending an ordered basis of $V_i$ to an ordered basis of $V_{i+1}$
for $i=1,\ldots,m$, where $V_{m+1}:=V_1$. Then $x$ has order~$m$. For
$i=1,\ldots,m$ let $T_i\le\GL(V_i)$ be the torus of scalar matrices and
$u_1\in\SL(V_1)$ a unipotent
element with a single Jordan block. Set $X':=\langle T_1,u_1,x\rangle\le\GL(V)$.
As $x$ permutes the $T_i$ transitively and $x^m=1$ we have that $X'$ is the
wreath product of $T_1\times\langle u_1\rangle$ with $\langle x\rangle$, and we
can write elements of $X'$ as $(x_1,\ldots,x_{m};x^j)$ for
$x_i\in (T_1\langle u_1\rangle)^{x^{i-1}}$ and some $j$. Then the element
$u:=(u_1,1,\ldots,1;x)$ has $m$th power $u^m=(u_1,\ldots,u_1;1)$ which has $m$
Jordan blocks of size $d$ on $V$. But then $u$ must have a single Jordan
block on $V$ by Lemma~\ref{lem:power}. Now with $T:=T_1\cdots T_m\cap\SL(V)$ the
subgroup $T\langle u\rangle$ is as in Proposition~\ref{prop:torus SL} and thus
the claim follows.
\end{proof}

\subsection{Torus normalisers in $\Sp(V)$ and $\GO(V)$}

We next discuss those classical groups in which regular unipotent elements
have a single Jordan block on the natural module, that is, the types $B_l,C_l$
and $D_l.2$ (see Lemma~\ref{lem:Jordan}).

For this, note that weight spaces for non-zero weights of a torus $T$ in
$\Sp(V)$ respectively $\SO(V)$, are totally isotropic, respectively totally
singular, and weight spaces for weights $\chi,\eta$ with $\chi\ne-\eta$, are
orthogonal to each other. To see this for $\SO(V)$, let $Q$ be the quadratic
form and $\beta$ the associated bilinear form on~$V$. Let $\chi\ne0$ be a
weight of $T$ with weight space $V_\chi$. Then for $v\in V_\chi$, we have
$Q(v) = Q(tv) = Q(\chi(t)v) = \chi(t)^2Q(v)$, for all $t\in T$. Since there
exists $t\in T$ with $\chi(t^2)\ne1$ we find $Q(v)=0$. So non-zero weight
spaces are indeed totally singular.
Further, for $\chi$ and $\eta$ two weights of $T$ and $v\in V_\chi$,
$w\in V_\eta$, we have $\beta(v,w)=\beta( tv,tw)=\chi(t)\eta(t)\beta(v,w)$ for
all $t\in T$, and if $\beta(v,w)\ne 0$ then $\chi = -\eta$. The argument for
$\Sp(V)$ is completely analogous.

\begin{lem}   \label{lem:torus p=2}
 Let $G=B_l$ or $C_l$ with $l\ge1$ and $X=T\langle u\rangle\le G$ where
 $T\ne1$ is a torus and $u$ is regular unipotent in $G$. Then $p=2$.
\end{lem}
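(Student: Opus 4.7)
The plan is to argue by contradiction assuming $p\ne 2$, and derive a parity contradiction from the $T$-weight space decomposition of the natural module. By Remark~\ref{rem:char 0} one may assume $p>0$, since in characteristic~$0$ a power of $u$ would lie in $X^\circ=T$ and still be regular unipotent, contradicting the absence of non-trivial unipotent elements in a torus.

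First I would embed $G\le\SL(V)$ via its natural representation, with $\dim V=2l+1$ when $G=B_l$ and $\dim V=2l$ when $G=C_l$, so that Lemma~\ref{lem:Jordan}(a) tells us that $u$ acts on $V$ with a single Jordan block; in particular, Proposition~\ref{prop:torus SL} applies to $T\langle u\rangle\le\SL(V)$, yielding a weight decomposition $V=\bigoplus_{\chi\in\Lambda}V_\chi$ with all $V_\chi$ of a common dimension~$d$, transitively permuted by~$u$. Hence $m:=|\Lambda|$ divides the $p$-power $|u|$, and so is itself a power of~$p$. Since $u$ fixes the zero-weight space $V_0$, transitivity forces either $V_0=V$, which would make $T$ act trivially on $V$ (contradicting $T\ne 1$ by faithfulness of $G\le\GL(V)$), or $V_0=0$; thus $V_0=0$.

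The key ingredient is $T$-invariance of the non-degenerate bilinear form $\beta$ on $V$: for $v\in V_\chi$, $w\in V_\eta$ and $t\in T$ one has
\[
  \beta(v,w)=\beta(tv,tw)=\chi(t)\eta(t)\beta(v,w),
\]
so any non-trivial pairing between $V_\chi$ and $V_\eta$ forces $\eta=-\chi$. Non-degeneracy together with $V_0=0$ then implies $V_{-\chi}\ne 0$ whenever $V_\chi\ne 0$; and since $X^*(T)$ is torsion-free, $\chi\ne-\chi$ for $\chi\ne 0$, so the non-zero weights split into disjoint $\{\pm\chi\}$-pairs of weight spaces of equal dimension. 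In particular both $\dim V$ and $m$ are even.

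For $G=B_l$ this is immediate: $\dim V=2l+1$ is odd, contradiction. For $G=C_l$, the even integer $m$ is a power of~$p$, forcing $p=2$, again contrary to assumption. The only mildly delicate point is the pairing of opposite weight spaces under the form, which is a routine consequence of $T$-invariance of $\beta$ and torsion-freeness of $X^*(T)$, so I expect no real obstacle.
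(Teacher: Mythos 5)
Your proof is correct and follows essentially the same route as the paper's: decompose the natural module into $T$-weight spaces, use the single Jordan block of $u$ (Lemma~\ref{lem:Jordan}) to see that $u$ permutes them transitively so that their number is a power of $p$, and use invariance of the form to pair each non-zero weight $\chi$ with $-\chi$. The only cosmetic difference is that you extract the contradiction from parity (of the number of weight spaces, and of $\dim V$ in type $B_l$), whereas the paper concludes directly that $\chi$ and $-\chi$ cannot both lie in a single $u$-orbit of odd length.
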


\begin{proof}
Assume $p\ne2$.
Write $V$ for the natural module of $G$ and let $V=\bigoplus_{i=1}^r V_i$ be its
$T$-weight space decomposition. Note that we have $r\ge2$ since $Z(G)^\circ=1$.
Now $u$ permutes the $V_i$ and hence their corresponding weights. As $u$ acts
as a single Jordan block on $V$ by Lemma~\ref{lem:Jordan}, this action must be
transitive, so $r$ is a power of $p$. Since $r\ge2$ there is at least one
non-zero weight $\chi$. As we are in $\Sp(V)$ or $\SO(V)$, then $-\chi$ is also
a weight, so both $\chi$ and $-\chi$ lie in one $u$-orbit, contradicting
that $p\ne2$.
\end{proof}

In the case $p=2$, by the exceptional isogeny between $B_l$ and $C_l$ we need
not consider type $B_l$. 

\begin{prop}   \label{prop:torus CD}
 Let $p=2$, $G=\Sp(V)$ or $\GO(V)$ and $X=T\langle u\rangle\le G$ with $T\ne1$
 a torus and $u$ having a single Jordan block on $V$. If
 $V=V_1\oplus\cdots\oplus V_r$ is the $T$-weight space decomposition then the
 $V_i$ are totally isotropic, respectively totally singular, and permuted
 transitively by~$u$. Moreover, up to renumbering, there is an orthogonal
 decomposition $V=(V_1\oplus V_{r/2+1})\perp\ldots\perp (V_{r/2}\oplus V_r)$.
\end{prop}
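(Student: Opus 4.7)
The plan is to combine the single-Jordan-block hypothesis on $u$ with the weight-space geometry recorded in the paragraph immediately preceding Lemma~\ref{lem:torus p=2}. Since $u\in N_G(T)$, it permutes the $T$-weight spaces $V_1,\dots,V_r$ on $V$; if this permutation action had more than one orbit, grouping the $V_i$ by orbit would produce a proper $u$-invariant direct-sum decomposition of $V$, contradicting the hypothesis that $u$ acts with a single Jordan block. Hence $u$ acts transitively on the $V_i$, so in particular all $V_i$ have the same dimension and $r$ is a power of~$2$.

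Next I would rule out the zero weight: the zero weight space (if it appeared among the $V_i$) would be $u$-invariant, and a transitive permutation can only have a fixed point if $r=1$; but $r=1$ would mean $T$ acts trivially on $V$, forcing $T=1$ since $T\le\GL(V)$, contrary to assumption. Consequently every weight $\chi_i$ is non-zero, and the remark preceding Lemma~\ref{lem:torus p=2} then tells us that each $V_i$ is totally isotropic in the symplectic case and totally singular in the orthogonal case, which is the first claim of the proposition.

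For the pairing, consider the involution $\sigma\colon\chi\mapsto-\chi$ on the set of weights of $T$ on $V$. The decisive (and only substantive) use of characteristic~$2$ is that $\sigma$ is fixed-point-free: a fixed point would satisfy $\chi^2=1$ as a character of $T$, which in characteristic~$2$ forces $\chi(t)=1$ for every $t\in T$, so $\chi=0$, contradicting the previous step. Hence $r$ is even and the weights partition into $r/2$ pairs $\{\chi_i,-\chi_i\}$. By the same cited remark, $V_\chi\perp V_\eta$ whenever $\chi\ne-\eta$, so the subspaces of the form $V_i\oplus V_{-\chi_i}$ are pairwise orthogonal; non-degeneracy of the form on $V$ then forces each such subspace to be itself non-degenerate. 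Renumbering so that $\chi_{r/2+i}=-\chi_i$ for $1\le i\le r/2$ produces exactly the orthogonal decomposition asserted in the statement.

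The argument is largely formal and there are no serious obstacles; the one step that genuinely requires attention is showing that the involution $\chi\mapsto-\chi$ has no fixed points, which is precisely where the hypothesis $p=2$ is needed to pin down the pairing structure.
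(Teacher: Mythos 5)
Your proof is correct and follows essentially the same route as the paper: transitivity of $u$ on the weight spaces from the single Jordan block, total isotropy/singularity from the remark preceding Lemma~\ref{lem:torus p=2}, and the pairing of opposed weight spaces via non-degeneracy of the form. One small quibble: the fixed-point-freeness of $\chi\mapsto-\chi$ on non-zero weights is not really where $p=2$ enters (it holds in any characteristic, since the character lattice of a torus is torsion-free); the hypothesis $p=2$ is rather what makes it possible for $\chi$ and $-\chi$ to lie in a single $u$-orbit of $2$-power length at all, as Lemma~\ref{lem:torus p=2} shows.
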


\begin{proof}
As $u$ has a single Jordan block, it permutes the $V_i$ transitively and so
$r$ is a 2-power. All $V_i$ are totally isotropic, respectively totally
singular, by the remarks before Lemma~\ref{lem:torus p=2}, and orthogonal to
all other weight spaces that do not have opposed weight. Further, if $\chi$ is
a weight of $T$ then so is $-\chi$, and thus for a suitable numbering, $V_i$
and $V_{i+r/2}$ have opposed weights, for $i=1,\ldots, r/2$. Thus we obtain the
claimed orthogonal decomposition.
\end{proof}

\begin{exmp}   \label{exmp:torus CD}
 The situation nailed down in Proposition~\ref{prop:torus CD} does give rise to
 examples within proper parabolic subgroups. To see this, let $p=2$, $G=\GO(V)$
 with $\dim V=2l$, where $l=2^fm$ with $m>1$ odd. By Lemma~\ref{lem:Jordan},
 for any odd $m$ the stabiliser $\GL_m.2$ in $\GO_{2m}$ of a maximal totally
 singular subspace contains a unipotent element $v$ with a single Jordan block.
 This normalises $T_1:=Z(\GL_m)$, and $v^2\ne1$ centralises $T_1$. Now embed
 $$T_1\langle v\rangle\times\cdots\times T_1\langle v\rangle\le
   H:=\GO_{2m}\times\ldots\times\GO_{2m}\le\GO_{2l}=\GO(V)$$
 ($2^f$ factors). The normaliser of $H$ in $\GO_{2l}$ contains an element $x$
 cyclically permuting the factors. Set $u:=(v,1,\ldots,1)x$. By construction
 $u^{2^f}=(v,\ldots,v)$ has $2^f$ Jordan blocks of size $2m$, so by
 Lemma~\ref{lem:power}, $u$ has a single Jordan block on $V$, it normalises
 $T:=T_1^{2^f}$, and $u^{2^{f+1}}\ne1$ centralises $T$. Thus,
 $T\langle u\rangle$ lies in a proper parabolic subgroup of $\GO(V)$, as in
 Proposition~\ref{prop:torus CD}. Since $D_l.2\le C_l$, this also provides
 examples in $C_l$.
\end{exmp}

\subsection{Torus normalisers in $\SO(V)$}
Here we consider tori in $D_l$ normalised by a regular unipotent element.

\begin{prop}   \label{prop:torus D}
 Let $X=T\langle u\rangle\le\SO(V)$ with $\dim V=2l\ge8$, $T\ne1$ a torus and
 $u$ regular unipotent in $\SO(V)$. Then $p=2$ and if
 $V=V_1\oplus\cdots\oplus V_r$ is the $T$-weight space decomposition then up
 to renumbering the $V_i$, we have one of:
 \begin{enumerate}[\rm(1)]
  \item $r=2$, $l$ is even, $u$ interchanges $V_1,V_2$ and $u^2$ acts with
   Jordan blocks of sizes $l-1,1$ on both $V_1$ and $V_2$;
  \item $u$ permutes $V_1,\ldots,V_{r-1}$ transitively (so $r=2^s+1$ for some
   $s\ge1$) and $V_r$ is the $0$-weight space, with $\dim V_r=2$; or
  \item $\langle u\rangle$ acts transitively on $\{V_1,\ldots,V_{r-2}\}$ and on
   $\{V_{r-1},V_r\}$, so $r=2^s+2$ for some $s\ge0$, and $V_{r-1}$ and $V_r$ are
   $1$-dimensional weight spaces for opposed weights.
 \end{enumerate}
\end{prop}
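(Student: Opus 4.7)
The plan is to analyse the structure via the $T$-weight space decomposition $V=\bigoplus_\chi V_\chi$, which is permuted by $u$, using the observation (from the discussion preceding Lemma~\ref{lem:torus p=2}) that for $\chi\ne0$ the space $V_\chi$ is totally singular and the non-degenerate form pairs $V_\chi$ with $V_{-\chi}$; in particular $\dim V_\chi=\dim V_{-\chi}$, so $V^+=\bigoplus_{\chi\ne0}V_\chi$ has even dimension. First I would show $p=2$. If $p$ is odd, every $u$-orbit $O$ on weights has odd length $k=|O|$, and $O$ cannot contain both $\chi$ and $-\chi$ (a relation $u^i\chi=-\chi$ with $0<i<k$ would force $k$ to divide $2i$, impossible for $k$ odd), so orbits on nonzero weights come in disjoint pairs $(O,-O)$. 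A direct computation shows that on each $u$-invariant summand $V_O$ the Jordan blocks of $u$ are $k$ times those of $u^k$ on $V_\chi$, and $V_O,V_{-O}$ contribute equal-sized blocks. Together with the $u$-invariant summand $V_0$, this cannot reproduce the shape $(2l-1,1)$ of $u$ given by Lemma~\ref{lem:Jordan}(c), whose odd block of size $2l-1$ is not available. Hence $p=2$.

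Now assume $p=2$, so $u$ has Jordan blocks $(2l-2,2)$ and $\dim V^u=2$. I would stratify the contribution to $V^u$ by orbit type, distinguishing self-paired orbits $O$ (those with $-O=O$) and $\pm$-paired pairs $(O,-O)$ (where $O\ne -O$). A cyclic description of $u$ on the span of an orbit of size $k=|O|$ yields that a self-paired orbit contributes $\dim V_\chi^{u^k}$ to $V^u$ (and carries this many Jordan blocks of $u$, of sizes $k$ times those of $u^k$ on $V_\chi$), while a $\pm$-paired pair contributes $2\dim V_\chi^{u^k}$, and $V_0$ contributes $\dim V_0^u$. The equation $\dim V^u=2$ combined with the shape $(2l-2,2)$ then forces one of the following: (i)~$V_0=0$ and there is exactly one self-paired orbit, with $\dim V_\chi^{u^k}=2$; (ii)~$V_0=0$ with two self-paired orbits, each contributing $1$; (iii)~$V_0\ne0$, $\dim V_0^u=1$, and exactly one self-paired orbit contributing $1$. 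The alternative of a single $\pm$-paired pair would force Jordan blocks $(l,l)$, hence $l=2$, excluded.

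Finally I would match each configuration to the stated cases. In (i), the orbit has size $k=2$ with opposed weights, $V_1,V_2$ are complementary totally singular of dimension~$l$, and $u^2$ has Jordan blocks $(l-1,1)$ on each, giving case~(1); the restriction that $l$ be even then follows because $V_1,V_2$, being maximal totally singular with trivial intersection, lie in the same $\SO(V)$-family of maximal totally singular subspaces if and only if $l$ is even, and $u\in\SO(V)$ preserves these families. In (ii), the block-size equation forces the smaller orbit to have size~$2$ with $1$-dimensional weight spaces of opposite weight, and the larger to have size $2^s$ with $s\ge 1$, yielding case~(3) with $V_0=0$. In (iii), matching $\{\dim V_0,k\dim V_\chi\}=\{2l-2,2\}$ gives either $\dim V_0=2$ with an orbit of size $2^s$, $s\ge 1$ (case~(2)), or $\dim V_0=2l-2$ with the orbit of size $2$ consisting of $1$-dimensional weight spaces of opposite weight (case~(3) with $s=0$). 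I expect the main obstacle to be the careful case analysis in the middle step, ensuring that every solution to $\dim V^u=2$ matches exactly one of the three stated configurations of the proposition.
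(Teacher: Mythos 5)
Your proposal is correct and follows essentially the same route as the paper: both analyse the $u$-permutation of the $T$-weight spaces, exploit the $\chi\leftrightarrow-\chi$ duality coming from the form together with the Jordan shape $(2l-1,1)$, resp.\ $(2l-2,2)$, of $u$ from Lemma~\ref{lem:Jordan}(c), and use the fact that inducing a Jordan block along a cyclic $2$-power orbit multiplies its size by the orbit length. Your bookkeeping via $\dim V^u=2$ and self-paired versus $\pm$-paired orbits is a mild reorganisation of the paper's case division into one, two-with-a-singleton, or two-with-a-pair orbits, and your derivation that $l$ is even in case~(1) via the two $\SO(V)$-families of maximal totally singular subspaces is the same fact the paper cites from Kleidman--Liebeck.
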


\begin{proof}
Let $V=\bigoplus_{i=1}^r V_i$ be the decomposition of $V$ into non-zero
$T$-weight spaces. Note that we have $r\ge2$ since $Z(\SO(V))^\circ=1$. From the
block structure of $u$ it follows that $\langle u\rangle$ has at most two orbits
on the set of $V_i$. In addition, the sum of the weight spaces in one of the
orbits is of dimension at most~$2$. Since
we are in $\SO(V)$, if $\chi$ is a weight of $T$ on $V$, then so is $-\chi$.
Now first assume that $p$ is odd. Then $\chi$ and $-\chi$ can only lie in the
same $u$-orbit if $\chi=0$. So $u$ has two orbits on the set of weight spaces,
one of length~$r-1$ and the other of length~1. There is a non-zero weight
$\chi$ in one of the orbits; the weight space of $-\chi$ then lies in the other
orbit. This forces $\dim V=2$, contrary to our assumption. 
\par
Thus we have $p=2$. First assume $u$ permutes the $V_i$ transitively. Then
$u^r$ stabilises each $V_i$, and has same block sizes $n_1,\ldots,n_s$ on each
of them. Since $r$ is a 2-power, the blocks of $u$ on $V$ then have sizes
$rn_1,\ldots,rn_s$, whence $r\le2$ and so $r=2$. Since $V_1,V_2$ are both
totally singular, then $X$ is contained in the stabiliser of a decomposition of
$V$ into a sum of two maximal totally singular subspaces. If $l$ is odd, then
this stabiliser in $\SO(V)$ fixes each $V_i$ (see \cite[Lemma~2.5.8]{KL90}).
Thus $l$ is even, $u$ interchanges $V_1$ and $V_2$ and $u^2$ has Jordan blocks
as claimed in~(1). \par
Next assume that $u$ permutes $V_1,\ldots,V_{r-1}$ transitively. Then without
loss of generality $\dim V_r=2$. If $V_r$ is not the 0-weight space,
then the opposite weight space must be one of the other $V_i$, so $r=2$, and
$\dim V=4$, contradicting our assumption. So we arrive at~(2).
\par
Finally, assume that $u$ permutes $V_1,\ldots,V_{r-2}$ transitively. Then
$\dim V_{r-1}=\dim V_r=1$ and the corresponding weights are opposed and
interchanged by $u$, which is~(3)
\end{proof}

\begin{exmp}   \label{exmp:torus orth}
We show that the cases in Proposition~\ref{prop:torus D} do give rise to
examples within
proper parabolic subgroups. So let $p=2$.\par
(1) Let $l$ be even, $T$ be the 1-dimensional central torus of $\GL_l$ inside
the stabiliser $\GL_l.2$ in $\SO_{2l}$ of a pair $V_1,V_2$ of maximal totally
singular subspaces. Thus $T$ acts by scalars on both $V_1,V_2$.
Then $T$ is normalised by the outer elements of $\GL_l.2$ interchanging
$V_1,V_2$. Now by Lemma~\ref{lem:Jordan}, a regular unipotent element $u$ in
the outer coset of $\GL_l.2$ has Jordan blocks of sizes $2l-2,2$, hence is
regular unipotent in $\SO_{2l}$. Then $X:=T\langle u\rangle$ lies in the
centraliser of the non-trivial unipotent element $u^2\in\GL_l$ (non-trivial
as soon as $l-1\ge2$), thus inside a proper parabolic subgroup. This is an
example of~(1) in Proposition~\ref{prop:torus D}. \par
(2) Let $H=\GO_{2l-2}\GO_2\cap\SO(V)$ be the stabiliser of an orthogonal
decomposition of $V$, where $\dim V=2l$ with $l=2^s+1$. Then by
\cite[Thm.~B(ii)(a)]{SS97}
there is a subgroup $T\langle u\rangle\le H$, with $T$ a maximal torus and
$u$ a regular unipotent element of $\SO(V)$. We number the weights
$\chi_1,\ldots,\chi_{2l}$ of $T$ on $V$ such that $u$ acts as the permutation
$(1,2,\ldots,2l-2)(2l-1,2l)$ on these. For
$T_1=\ker\chi_{2l-1}\cap\ker\chi_{2l}$, the group $T_1\langle u\rangle$ is an
example for case~(2). On the other hand by taking the direct product of $\GO_2$
with a subgroup of $\GO_{2l-2}$ as constructed in Example~\ref{exmp:torus CD},
and intersecting with $\SO(V)$ we find an example for~(3), and as in
part~(1) we see that both lie inside proper parabolic subgroups. 
 \par
(3) The example for $\SO_6=\SL_4$ in Proposition~\ref{prop:torus SL II} falls
into case~(3); this can be seen from the weight spaces on the two modules, as
the natural module for $\SO_6$ is the wedge square of the natural module for
$A_3$.
\end{exmp}

We are not aware of examples of torus normalisers in disconnected groups
$A_l.2$ containing outer regular unipotent elements and lying in a proper
parabolic subgroup.

\subsection{Torus normalisers in simple exceptional groups}
Finally, we investigate the case of exceptional groups.

\begin{prop}   \label{prop:Aut T}
 Let $T$ be a torus and $u\in\Aut(T)$ of prime power order $p^a$. Then
 $\dim T\ge p^{a-1}(p-1)$.
\end{prop}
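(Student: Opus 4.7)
The plan is to translate the statement about $\Aut(T)$ into a statement about integer matrices via the character lattice, and then apply a cyclotomic-polynomial argument.

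First I would set $n := \dim T$ and recall that for any torus, the functor $T \mapsto X(T)$ identifies $\Aut(T)$ with $\GL(X(T)) \cong \GL_n(\ZZ)$. Under this identification, $u$ corresponds to a matrix $A \in \GL_n(\ZZ)$ of order exactly $p^a$. The goal is then purely linear-algebraic: show that any $A \in \GL_n(\ZZ)$ (equivalently, $\GL_n(\QQ)$) of order $p^a$ satisfies $n \ge \varphi(p^a) = p^{a-1}(p-1)$.

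Next I would analyse the minimal polynomial $m_A(x) \in \QQ[x]$ of $A$. Since $A^{p^a} = 1$, we have $m_A(x) \mid x^{p^a} - 1 = \prod_{i=0}^{a}\Phi_{p^i}(x)$, where $\Phi_{p^i}$ denotes the $p^i$-th cyclotomic polynomial. The crucial observation is that if $\Phi_{p^a}$ did \emph{not} divide $m_A$, then $m_A$ would divide $\prod_{i=0}^{a-1}\Phi_{p^i}(x) = x^{p^{a-1}}-1$, forcing $A^{p^{a-1}} = 1$ and contradicting that $A$ has order exactly $p^a$. Hence $\Phi_{p^a} \mid m_A$, and since $\Phi_{p^a}$ is irreducible over $\QQ$, it also divides the characteristic polynomial of $A$, which has degree $n$. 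Comparing degrees yields
\[
 n \;\ge\; \deg \Phi_{p^a} \;=\; \varphi(p^a) \;=\; p^{a-1}(p-1),
\]
which is exactly the desired bound.

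There is no serious obstacle here; the only point that needs a brief justification is the passage from $\Aut(T)$ to $\GL_n(\ZZ)$, which is standard for algebraic tori over an algebraically closed field, together with the elementary fact about cyclotomic factors of $x^{p^a}-1$. The argument is characteristic-free and needs no further input from the paper.
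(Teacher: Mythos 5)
Your argument is correct and is essentially the same as the paper's: the paper identifies $\Aut(T)$ with $\GL_n(\ZZ)$ and observes that an element of order $p^a$ must have a primitive $p^a$-th root of unity as an eigenvalue together with all its $\varphi(p^a)$ Galois conjugates, which is exactly your observation that $\Phi_{p^a}$ divides the minimal, hence the characteristic, polynomial. The cyclotomic-polynomial packaging is just a slightly more formal rendering of the same idea.
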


\begin{proof}
We have $\Aut(T)\cong\GL_n(\ZZ)$ with $n=\dim T$. If $u\in\GL_n(\ZZ)$ has
order $p^a$ then it must have an eigenvalue $\zeta$ which is a primitive
$p^a$th root of unity. But then all Galois conjugates of $\zeta$ are also
eigenvalues of $u$, and there are $\varphi(p^a)=p^{a-1}(p-1)$ of these.
\end{proof}

\begin{rem}   \label{rem:weyl group}
 Let $T$ be a torus in a connected reductive group $G$ and $u\in G$ a unipotent
 element acting non-trivially on $T$. Then $p$ divides the order of the Weyl
 group of $G$. Indeed, by assumption $u\in N_G(T)/C_G(T)$ is non-trivial. As
 $L=C_G(T)$ is a Levi subgroup of $G$ and $N_G(T)\le N_G(L)$, the claim
 follows with \cite[Cor.~12.11]{MT}.
\end{rem}

\begin{prop}   \label{prop:torus exc}
 Let $G$ be simple of exceptional type and $X=T\langle u\rangle<G$ with
 $T$ a non-trivial torus and $u$ a regular unipotent element of $G$. Then
 one of the following holds:
 \begin{enumerate}[\rm(1)]
  \item $G=E_6$, $p=3$, $\dim T=2$; or
  \item $G=E_7$, $p=2$, $\dim T=1$.
 \end{enumerate}
\end{prop}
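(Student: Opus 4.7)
The plan is to bound $\dim T$ above and below by exploiting the action of $u$ on $T$ and then rule out all but the stated cases via the centraliser data for unipotent classes.

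\textbf{Step 1.} I would first show that $u$ acts non-trivially on $T$. If $[u,T]=1$, then $T\le C_G(u)^\circ$; but for a regular unipotent element $u$ in the simple group $G$ one has $Z(G)^\circ=1$ and $C_G(u)^\circ$ unipotent, forcing $T=1$ and contradicting our hypothesis. So the image of $u$ in $N_G(T)/C_G(T)$ is a non-trivial automorphism of $p$-power order $p^a$ with $a\ge 1$. By Remark~\ref{rem:weyl group}, $p$ must divide $|W(G)|$, so $p\in\{2,3\}$ for $G\in\{G_2,F_4\}$ and $p\in\{2,3,5,7\}$ for $G\in\{E_6,E_7,E_8\}$.

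\textbf{Step 2.} Proposition~\ref{prop:Aut T} yields the lower bound
\[
\dim T\ \ge\ p^{a-1}(p-1).
\]
Conversely, $u^{p^a}$ centralises $T$, so $T$ embeds into a maximal torus of $C_G(u^{p^a})^\circ$; writing $r(y)$ for the rank of the reductive part of $C_G(y)^\circ$, this yields
\[
\dim T\ \le\ r(u^{p^a}).
\]
Moreover, if $\dim T=\mathrm{rank}(G)$ then $T$ is a maximal torus and the non-trivial unipotent element $u^{p^a}$ would lie in $C_G(T)=T$, a contradiction. Hence we may always assume $\dim T<\mathrm{rank}(G)$.

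\textbf{Step 3.} I would then iterate over each pair $(G,p)$ with $p\mid|W(G)|$ and over each admissible $a\ge 1$ (i.e.\ $p^a$ divides $|u|$, as listed in Table~\ref{tab:order reg}). For each triple $(G,p,a)$ I would identify the conjugacy class of $u^{p^a}$ from the power-map data in \cite{La95} and read $r(u^{p^a})$ off from the centraliser tables in \cite[\S22]{LS12}. Confronting the inequalities $p^{a-1}(p-1)\le\dim T\le r(u^{p^a})$, almost every triple is eliminated because either $p^{a-1}(p-1)$ already exceeds $\mathrm{rank}(G)$, or $r(u^{p^a})$ falls short of $p^{a-1}(p-1)$. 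For example, at $(E_8,5)$ the regular element has order $125$, so $a=3$ gives the hopeless lower bound $100>8$, $a=2$ gives $20>8$, and $a=1$ forces $\dim T\ge 4$ while $u^5$ has small reductive centraliser; a completely parallel argument handles the remaining exceptional cases. The only surviving configurations are $(E_6,3)$ and $(E_7,2)$, each at $a=1$, where one then checks that $r(u^3)=2$ and $r(u^2)=1$ respectively, pinning down $\dim T$ to the exact values $2$ and $1$.

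The main obstacle is the organised table look-up in \cite{La95} and \cite[\S22]{LS12}, particularly for $G=E_8$ where $|u|$ can reach $125$ and several powers must be excluded. A secondary delicate point is the determination of the exact values of $\dim T$ in the two surviving cases: once $a=1$ is forced, the lower bound only gives $\dim T\ge p-1$, and the matching upper bound $r(u^p)=p-1$ has to be extracted precisely from the centraliser of the relevant conjugacy class of $u^p$ rather than from a general rank estimate on $G$.
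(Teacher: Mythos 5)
Your overall strategy — bound $\dim T$ below by $p^{a-1}(p-1)$ via Proposition~\ref{prop:Aut T} and above by the rank of $C_G(u^{p^a})$, then sweep through the pairs $(G,p)$ with $p\mid|W(G)|$ using the power maps of \cite{La95} and the centraliser tables of \cite{LS12} — is exactly the paper's strategy, and your Step~1 and Step~2 are sound. However, there is a genuine gap in Step~3: the confrontation of the two inequalities does \emph{not} eliminate everything outside the two stated conclusions. Three further configurations satisfy both bounds with equality and survive the purely numerical sieve: for $G=E_6$, $p=2$ one gets $\dim T=4$ with $|\bar u|=8$ (here $u^8$ lies in class $2A_1$, whose centraliser has reductive rank exactly $4=2^2(2-1)$); and for $G=E_7$, $p=2$ one gets both $\dim T=2$ with $|\bar u|=4$ (class $A_4+A_1$, rank $2$) and $\dim T=4$ with $|\bar u|=8$ (rank $4$). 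Your claim that ``the only surviving configurations are $(E_6,3)$ and $(E_7,2)$, each at $a=1$'' is therefore false at the level of the inequalities alone.

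To dispose of these residual cases the paper needs a second, structural argument that your proposal does not supply. In each of them $\dim T$ equals the full rank of $C_G(u^{p^a})$, so $T$ is forced to be the connected centre $Z(L)^\circ$ of the relevant Levi subgroup $L$ (e.g.\ $L$ of type $A_1^2$ in $E_6$, of type $A_4A_1$ in $E_7$); since $u$ normalises $T$ it normalises $L=C_G(T)$ and hence acts on $[L,L]$. One then applies Lemma~\ref{lem:innercent}: a suitable power of $u$ acts as an inner automorphism on $[L,L]$, producing a unipotent element of large order centralising $[L,L]$, and the tables in \cite{La95} and \cite[\S22]{LS12} show no unipotent class of that order has such a centraliser. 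Without this step (or an equivalent one) the proof is incomplete, because the two ``delicate points'' you flag at the end are not the real difficulty — the real difficulty is that the inequality method leaves live cases that must be killed by a different mechanism.
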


\begin{proof}
The regular unipotent element $u$ induces a non-trivial automorphism $\bar u$
of $T$, so by the previous remark, $p$ divides the order of the Weyl group of
$G$.

Combining the $p$-power map on unipotent classes \cite[Tab.~D and~E]{La95} and
the structure of centralisers \cite[\S22]{LS12} we have compiled in
Table~\ref{tab:tori cent} a list of the dimensions of maximal tori in the
centralisers $C_G(u^{p^i})$ for $i\ge1$ and $u^{p^i}\ne1$.

\begin{table}[htb]
\caption{Ranks of centralisers $C_G(u^{p^i})$, $i\ge1$}   \label{tab:tori cent}
$$\begin{array}{c|cccc}
 & p=2& 3& 5& 7\cr
\hline
 G_2& 0,1& 0\\
 F_4& 0,0,2& 0,3\\
 E_6& 0,1,{4}& {2},5& 2\\
 E_7& {1},{2},{4},6& 0,3& 1& 2\\
 E_8& 0,1,2,4& 0,3,7& 0,7& 1\\
\end{array}$$
\end{table}

Now first consider $G=G_2$. Then $\dim T\le2$, so $\bar u$ has order at most~4
when $p=2$, respectively~3 when $p=3$, by Proposition~\ref{prop:Aut T}.
Hence $u^4$, respectively $u^3$, must centralise~$T$, which by
Table~\ref{tab:tori cent} implies $\dim T=1$ and $p=2$. But in that case,
$\bar u$ has order at most~2, so $u^2$ centralises $T$ and we reach a
contradiction to Table~\ref{tab:tori cent}.   \par
When $G=F_4$, then $\dim T\le4$ and by Proposition~\ref{prop:Aut T}, $\bar u$ has
order at most~8. Again by Table~\ref{tab:tori cent} this gives that $\bar u$ has
order~8 and $\dim T\le3$, contradicting the bound in
Proposition~\ref{prop:Aut T}.
\par
For $G=E_6$ with $\dim T\le6$ we have $|\bar u|\le 8,9,5$ when $p=2,3,5$
respectively. For $p=2$, using Proposition~\ref{prop:Aut T} and
Table~\ref{tab:tori cent} we find that $\dim T=4$ and $\bar u$ of order~8 is the
only possibility. Here $u^8$ lies in class $2A_1$ by \cite[Tab.~D]{La95}, and
its centraliser has rank~4. Let $L$ be an $A_1^2$-Levi subgroup of $G$
containing $u^8$; it has connected centre $Z(L)^\circ$ of dimension~4, so this
must be the torus $T$ in $C_G(u^8)$. Now $u$ normalises $T$, so it also
normalises $L=C_G(T)$, and thus $[L,L]=A_1^2$. If $u$ acts by an inner
automorphism on $A_1^2$, then by Lemma~\ref{lem:innercent} there is an element
of order~16 centralising $A_1^2$, but the only elements of $G$ of that order are
regular, a contradiction. Therefore, it acts by a graph automorphism on the
$A_1^2$ and $u^2$ is inner and hence some element of order $8$ centralises
$A_1^2$. Again by \cite{LS12} and \cite{La95} there is no
element of order~8 in $G$ with such a centraliser. So this does not occur.
Next, for the case $p=3$ using Proposition~\ref{prop:Aut T} and
Table~\ref{tab:tori cent} as above, only $\dim T=2$ with $\bar u$ of order~3
remains. So $v:=u^3$, in class $D_4(a_1)$ by \cite[Tab.~D]{La95}, centralises
$T$, and we reach case~(1) of the statement. The case $p=5$ is not possible by
Table~\ref{tab:tori cent}.
\par
For $G=E_7$ with Proposition~\ref{prop:Aut T} and Table~\ref{tab:tori cent} and
arguing as above we are left with the case that $p=2$ and either $\dim T=4$ and
$\bar u$ has order~8, or $\dim T=2$ and $|\bar u|=4$, or $\dim T=1$ and
$|\bar u|=2$. The last case occurs in the conclusion, so we need to exclude the
former two. If $\dim T=2$ and $|\bar u|=4$, then $u^4$ centralises $T$ and
lies in class $A_4+A_1$. Let $L$ be a Levi subgroup of this type containing
$u^4$. It has centre $Z(L)^\circ$ of dimension~2, so this is in fact $T$. Now
$u$ normalises $T$ and hence also $[L,L]=A_4A_1$. Now $u^2$, of order~16,
acts as an inner element on this, and by Lemma~\ref{lem:innercent} and using
\cite{La95} and \cite{LS12} we arrive at a contradiction. The case
where $\dim T=4$ is similar.  \par
Finally for $G=E_8$, the same line of argument as for the other groups shows
that no new constellations occur.
\end{proof}

\begin{exmp}   \label{exmp:torus exc}
Both cases in Proposition~\ref{prop:torus exc} do actually lead to examples.
\par
(a) Let $G=E_6$ with $p=3$. By \cite[Thm.~A]{SS97}, there is a maximal subgroup
$H=D_4T_2.\fS_3$ of $G$ containing a regular unipotent element $u$, with
$T_2\unlhd H$ a 2-dimensional torus. As $u^3\ne1$ centralises $T_2$, the
subgroup $T_2\langle u\rangle$ of $H$ then lies in a proper parabolic subgroup
and so yields an example for the situation in
Proposition~\ref{prop:torus exc}(a).
\par
(b) Let $G=E_7$ with $p=2$. According to \cite[Thm.~A]{SS97} there is a maximal
subgroup $H=E_6T_1.2$ in $G$ containing a regular unipotent element $u$, with
$T_1\unlhd H$ a 1-dimensional torus. Then $T_1\langle u\rangle\le H$
yields an example for the situation in Proposition~\ref{prop:torus exc}(b).
\end{exmp}



\begin{thebibliography}{131}

\bibitem{AR67}
{\sc W. Adams, M. Rieffel}, Adjoint functors and derived functors with an
  application to the cohomology of semigroups. \emph{J. Algebra \bf7} (1967),
  25--34.

\bibitem{BGMT}
{\sc T. Burness, S. Ghandour, C. Marion, D. M. Testerman}, \emph{Irreducible
  Almost Simple Subgroups of Classical Algebraic Groups}. Mem. Amer. Math. Soc.
  {\bf236}, American Mathematical Society, Providence, RI, 2015.

\bibitem{BT}
  {\sc T.C. Burness, D. M. Testerman}, $A_1$-type subgroups containing regular
  unipotent elements. \emph{Forum Math. Sigma \bf7 e12} (2019), 60pp.

\bibitem{Ja03}
{\sc J. C. Jantzen}, \emph{Representations of Algebraic Groups}. Second edition.
  Mathematical Surveys and Monographs, 107. American Mathematical Society,
  Providence, RI, 2003.

  \bibitem{KL90}
{\sc P. B. Kleidman, M. W. Liebeck}, \emph{The Subgroup Structure of the Finite
  Classical Groups}. London Math. Soc. Lecture Note Series, 129. Cambridge
  University Press, Cambridge, 1990.

\bibitem{La95}
{\sc R. Lawther}, Jordan block sizes of unipotent elements in exceptional
  algebraic groups. \emph{Comm. Algebra \bf23} (1995), 4125--4156. 

\bibitem{LS96}
{\sc M. W. Liebeck, G. M. Seitz}, \emph{Reductive Subgroups of Exceptional
  Algebraic Groups}. Mem. Amer. Math. Soc. {\bf121}, American Mathematical
  Society, Providence, RI, 1996.

\bibitem{LS04}
{\sc M. W. Liebeck, G. M. Seitz}, \emph{The Maximal Subgroups of Positive
  Dimension in Exceptional Algebraic Groups}. Mem. Amer. Math. Soc. {\bf169},
  American Mathematical Society, Providence, RI, 2004. 

\bibitem{LS12}
{\sc M. W. Liebeck, G. M. Seitz}, \emph{Unipotent and Nilpotent Classes in
  Simple Algebraic Groups and Lie Algebras}. Mathematical Surveys and
  Monographs, 180. American Mathematical Society, Providence, RI, 2012.

\bibitem{LT04}
{\sc M. W. Liebeck, D. M. Testerman}, Irreducible subgroups of algebraic groups.
  \emph{Quart. J. Math. \bf 55} (2004), 47--55.

\bibitem{LT18}
{\sc A. Litterick, A. Thomas}, Complete reducibility in good characteristic.
  \emph{Trans. Amer. Math. Soc. \bf370} (2018), 5279--5340.

\bibitem{Lue}
{\sc F. L\"ubeck}, Small degree representations of finite Chevalley groups in
  defining characteristic. \emph{LMS J. Comput. Math. \bf4} (2001), 135--169.

\bibitem{Ma93}
{\sc G. Malle}, Generalized Deligne-Lusztig characters. \emph{J. Algebra \bf159}
  (1993), 64--97.

\bibitem{Ma93b}
{\sc G. Malle}, Green functions for groups of types $E_6$ and $F_4$ in
  characteristic~2. \emph{Comm. Algebra \bf21} (1993), 747--798.

\bibitem{MT}
{\sc G. Malle, D. Testerman}, \emph{Linear Algebraic Groups and Finite Groups
  of Lie Type}. Cambridge Studies in Advanced Mathematics, 133. Cambridge
  University Press, Cambridge, 2011.

\bibitem{McN}
{\sc G. McNinch}, Dimensional criteria for semisimplicity of representations.
  \emph{Proc. London Math. Soc. \bf76} (1998), 95--149.

\bibitem{SS97}
{\sc J. Saxl, G. M. Seitz}, Subgroups of algebraic groups containing regular
  unipotent elements. \emph{J. London Math. Soc. (2) \bf55} (1997), 370--386. 

\bibitem{Se91}
{\sc G. M. Seitz}, \emph{Maximal Subgroups of Exceptional Algebraic Groups}.
  Mem. Amer. Math. Soc. {\bf90}, American Mathematical Society, Providence,
  RI, 1991.

\bibitem{Sp82}
{\sc N. Spaltenstein}, \emph{Classes Unipotentes et Sous-Groupes de Borel}.
  Lecture Notes in Mathematics, 946. Springer-Verlag, Berlin-New York, 1982.

\bibitem{SpSt}
{\sc T. Springer, R. Steinberg}, Conjugacy classes, \emph{Seminar on Algebraic
  Groups and Related Finite Groups}, Lecture notes in Math. Vol.131,
  Springer-Verlag, Berlin, 1970.

\bibitem{St68}
{\sc R. Steinberg}, \emph{Endomorphisms of Linear Algebraic Groups}. Mem. Amer.
  Math. Soc. {\bf80}, American Mathematical Society, Providence, RI, 1968.

\bibitem{Stew13}
{\sc D. Stewart}, \emph{The Reductive Subgroups of $F_4$}. Mem. Amer. Math.
Soc. {\bf223}, American Mathematical Society, Providence, RI, 2013.

\bibitem{Sup95}
{\sc I. Suprunenko}, Irreducible representations of simple algebraic groups
  containing matrices with big Jordan blocks. \emph{Proc. London Math. Soc. (3)
  \bf71} (1995), 281--332.

\bibitem{TZ13}
{\sc D. M. Testerman, A. Zalesski}, Irreducibility in algebraic groups and
  regular unipotent elements. \emph{Proc. Amer. Math. Soc. \bf141} (2013),
  13--28.

\bibitem{TZ18}
{\sc D. M. Testerman, A. Zalesski}, Irreducible representations of simple
  algebraic groups in which a unipotent element is represented by a matrix with
  a single non-trivial Jordan block. \emph{J. Group Theory \bf21} (2018), 1--20.

\bibitem{Wei}
{\sc C. Weibel}, \emph{An Introduction to Homological Algebra}. Cambridge
  University Press, Cambridge, 1994.

\end{thebibliography}
\end{document}